\newtheorem{theorem}{Theorem}[section]
\newtheorem{lemma}[theorem]{Lemma}
\newtheorem{proposition}[theorem]{Proposition}
\theoremstyle{definition}
\newtheorem{definition}[theorem]{Definition}
\theoremstyle{remark}
\newtheorem{remark}[theorem]{Remark}
\numberwithin{equation}{section}
\newcommand{\ost}{\Omega_{S}(t)}
\newcommand{\oso}{\Omega_{S}(0)}
\newcommand{\oft}{\Omega_{F}(t)}
\newcommand{\ofo}{\Omega_{F}(0)}
\newcommand{\rt}{\mathbb{R}^{3}}
\newcommand{\ct}{\mathbb{C}^{3}}
\newcommand{\ds}{\displaystyle}
\newcommand{\poso}{\partial\oso}
\newcommand{\post}{\partial \ost}
\newcommand{\mx}{\mathcal{X}}
\newcommand{\my}{\mathcal{Y}}
\newcommand{\mz}{\mathcal{Z}}
\newcommand{\md}{\mathcal{D}}
\newcommand{\mr}{\mathcal{R}}
\newcommand{\mpp}{\mathcal{P}}
\newcommand{\mv}{\mathcal{V}}
\newcommand{\dg}{{\rm d} \gamma}
\begin{document}

\title[]{ $L^p$-$L^q$ Maximal Regularity for some Operators  Associated with Linearized  Incompressible Fluid-Rigid Body Problems}

 \date{\today}

\author{D Maity}
\address{Institut de Math\'ematiques, Universit\'e de Bordeaux, Bordeaux INP, CNRS
F-33400 Talence, France}
\email{debayan.maity@u-bordeaux.fr}
\author{M. Tucsnak}
\address{Institut de Math\'ematiques, Universit\'e de Bordeaux, Bordeaux INP, CNRS
F-33400 Talence, France}
\email{marius.tucsnak@u-bordeaux.fr}

\begin{abstract}
We study an unbounded operator arising naturally after linearizing the system modelling the motion of a rigid body in a viscous incompressible fluid. We show that this operator is $\mr$ sectorial in $L^q$ for every $q\in (1,\infty)$, thus it has the maximal $L^p$-$L^q$ regularity property. Moreover, we show that the generated semigroup is exponentially stable with respect to the $L^q$ norm. Finally, we use the results to prove the global existence for small initial data, in an $L^p$-$L^q$ setting, for the original nonlinear problem.
\end{abstract}

\maketitle

 {\bf Key words.} Fluid Structure interaction, Incompressible flow, Maximal $L^{p}$ regularity
 {\bf AMS subject classifications.} 76D03, 35Q30,76N10

%\tableofcontents

%%%%%%%%%%%%%%%%%%%%%%%%%%%%%%%%%%%%%%%%%%%%%%%%%%%%%%%%%%%%%%%%%%%%%%

\section{Introduction and main results}\label{sec_intro}

The aim of this work is to show that the semigroup associated to the equations obtained  by linearizing some systems modelling fluid-structure interactions has the maximal $L^p$-$L^q$ regularity property. This result can be seen as an improvement of those in \cite{TT1,TT2}, where the result was proved in a Hilbert space setting and in \cite{WX11}, where it has been shown that the corresponding semigroup is analytic in $L^{\frac 65}(\mathbb{R}^3)\cap L^q(\mathbb{R}^3)$, for $q\geqslant 2$. We then apply this result in proving a global existence and uniqueness result, for small data, for the
original nonlinear problem. Such a result seems new in an  $L^p$-$L^q$ setting. The references \cite{TT1,TT2} and \cite{Gei13} contain closely related results  and methods which are often used in  the present paper.

Let us first remind the original free boundary system which motivates this work. We will come back to this system later on, in order to prove the global existence of solutions for small initial data. The smallness is measured in a Besov space and the solutions lie in function spaces which are $L^{p}$ with respect to time and $L^{q}$ with respect to the space variable. As far as we know, global existence results of this type were known only in an $L^2$ setting.

Consider a rigid body immersed in a viscous incompressible fluid and moving under the action of forces exerted by the fluid only.  At time $t \geqslant 0,$ this solid occupies a smooth bounded domain $\Omega_{S}(t).$ The fluid and rigid body are contained in a bounded domain $\Omega \subset \rt$ with smooth boundary $\partial \Omega.$ We assume that there exists a constant $\alpha$ with
\begin{align} \label{eq:ini-d}
\mathrm{dist} \left(\oso,\partial\Omega\right) \geqslant \alpha > 0.
\end{align}
 At any time  $t \geqslant 0,$ we denote by  $\Omega_{F}(t) = \Omega \setminus \overline \ost$ the  domain occupied by the fluid. We assume that the motion of the fluid is governed by the incompressible Navier-Stokes equations, whereas the motion of the structure is governed by the balance equation for linear and angular momentum.  The full system of equations modelling the rigid body inside  the fluid can be written as
 \begin{alignat}{2} \label{eq:mainsys}
& \partial_{t} u + ( u \cdot \nabla) u - \mathrm{div} \ \sigma(u,\pi) = 0, &  \quad t \in (0,\infty), \  x\in \oft, \notag \\
& \mathrm{div} \ u =0,  &  \quad t \in (0,\infty), \  x\in \oft, \notag  \\
&  u = 0, & \quad t \in (0,\infty), \  x\in \partial\Omega, \notag  \\
& u = a'(t) + \omega(t) \times (x - a(t)), & t \in (0,\infty), \  x\in \partial\ost, \\
& m a''(t) = -\int_{\post} \sigma(u,\pi) n \ {\rm d} \gamma, & t \in (0,\infty), \notag\\
& J \omega'(t)  = (J\omega) \times \omega - \int_{\post} (x- a(t)) \times \sigma(u,\pi)n  \ \dg, & t \in (0,\infty), \notag \\
& u(0,x) = u_{0}(x) & x \in \Omega_{F}(0), \notag \\
& a(0) = 0, \quad a'(0) = \ell_{0}, \quad \omega(0) = \omega_{0}.  \notag
 \end{alignat}
 In the above equations, $u(t,x)$ denote the velocity of the fluid, $\pi(t,x)$ denote the pressure of the fluid, $a(t)$ denote the position of the centre of the mass and $\omega(t)$ denote the angular velocity of the rigid body.  The domain $\Omega_{S}(t)$ is defined by
\[ \ost = a(t) + Q(t)y, \quad \forall y \in \Omega_{S}(0), \; \forall t > 0, \]
where $Q(t) \in \mathbb{M}_{3\times 3}(\mathbb{R})$ is the  orthogonal matrix giving the orientation of the solid. More precisely, $\omega(t)$ and $Q(t)$ are related to each other through the following relation
\begin{align} \label{eqQ}
\dot Q(t)Q(t)^{-1} y = A(\omega(t)) y =  \omega(t) \times y, \quad \forall y \in \mathbb{R}^{3}, \quad Q(0) = I_{3},
\end{align}
where the skew-symmetric matrix $A(\omega)$ is given by
\begin{align*}
A(\omega) = \begin{pmatrix}
0 & -\omega_{3} & \omega_{2} \\ \omega_{3} & 0 & -\omega_{1} \\ -\omega_{2} & \omega_{1} & 0
\end{pmatrix}, \qquad  \omega \in \mathbb{R}^{3}.
\end{align*}

The constant $m > 0$ denote  the mass of the rigid structure and $J(t) \in \mathcal{M}_{3 \times 3} (\mathbb{R})$ its tensor of inertia at time $t.$ This tensor is given by
\begin{align} \label{tensor}
J(t) a \cdot b = \int_{\oso} \rho_{S}(y) (a \times Q(t)y) \cdot (b \times Q(t)y) \ dy, \qquad \forall a,b \in \rt,
\end{align}
where $\rho_{S} > 0$ is the density of the structure.  One can check that
\begin{align}
J(t) a \cdot a\geqslant C_{J} |a|^{2} > 0,
\end{align}
where $C_{J}$ is independent of $t > 0.$ In the above,  we have denoted by $\post$ the boundary of the rigid structure at time $t$ and by $n(t,x)$ the unit normal to $\partial \ost$ at the point $x$
directed towards the interior of the rigid body. The Cauchy stress tensor  $\sigma(u,\pi)$ is given by
\begin{align}
\sigma(u,\pi) = -\pi I_{3} + 2\nu \varepsilon(u), \quad \varepsilon(u) = \frac{1}{2}\left(\nabla u + \nabla u ^{\top} \right),
\end{align}
where the positive constant $\nu $ is the viscosity of the fluid.

Linearizing the above equations around the zero solution we obtain a system coupling Stokes equations in a fixed domain and an  ODE system. The corresponding equations read as
\begin{alignat}{2} \label{fsi-l}
&\partial_{t} u - \nu \Delta u + \nabla \pi  = f, \quad \mathrm{div} \ u = 0, & \quad t \in (0,\infty), \  y\in \ofo, \notag \\
& u = 0,  & t \in (0,\infty), \  y\in \partial\Omega,\notag  \\
&  u  = \ell  + \omega \times y, & \quad t \in (0,\infty), \  y\in \partial\oso, \notag \\
&  m \ell' =  -  \int_{\partial \Omega_{S}(0)}  \sigma(u,\pi) n \ d\gamma  +  g_{1}, & \quad t \in (0,\infty), \\
&  J(0) \omega' =  - \int_{\partial \Omega_{S}(0)} y \times   \sigma(u,\pi)n \ d\gamma + g_{2}, &  \quad t \in (0,\infty), \notag \\
&  u (0,y) = u_{0}(y), &  y \in  \Omega_{F}(0),  \notag \\
&  \ell(0) = \ell_{0}, \quad  \omega(0) = \omega_{0},  \notag
\end{alignat}
where $n$ is the unit normal to $\poso$ directed towards the interior of the rigid body.

Let us now define  the operator associated with the above linear fluid-structure interaction problem, which was first introduced in \cite{TT1,TT2}. The idea is to extend the fluid velocity $u$ by $\ell(t) + \omega(t) \times y$ in $\Omega_{S}(0).$ More precisely,  for any $1 <  q < \infty$  we define
\begin{align}
& \mathbb{H}^{q}(\Omega) = \left\{ \varphi \in L^{q}(\Omega)^{3} \mid \mathrm{div} \ \varphi = 0 \mbox{ in } \Omega, \quad \varepsilon(\varphi) = 0 \mbox{ in } \oso, \quad \varphi \cdot n = 0 \mbox{ on } \partial\Omega \right\}
\end{align}

We define
\begin{align}
\mathcal{D}(\mathbb{A}) = \left\{ \varphi \in W^{1,q}_{0}(\Omega)^{3} \mid \varphi|_{\ofo} \in W^{2,q}(\ofo)^{3}, \quad \mathrm{div} \ \varphi = 0 \mbox{ in } \Omega, \quad \varepsilon(\varphi) = 0 \mbox{ in } \oso\right\}.
\end{align}
For all $v \in \mathcal{D}(\mathbb{A})$ we set
\begin{align*}
\mathcal{A}v = \begin{cases}
- \nu \Delta v & \mbox{ in } \ofo, \\
\ds  2\nu m^{-1} \int_{\poso} \varepsilon(v) n \ d \gamma  + \left(2\nu J(0)^{-1} \int_{\poso} y \times \varepsilon(v)n \ d\gamma \right) \times y & \mbox{ in } \oso,
\end{cases}
\end{align*}
and
\begin{align} \label{sg-TT}
\mathbb{A}v = \mathbb{P} \mathcal{A} v,
\end{align}
where $\mathbb{P}$ is the  projection from $L^{q}(\Omega)^{3}$ onto $\mathbb{H}^{q}(\Omega).$ The existence of such projector $\mathbb{P}$ can be found in \cite[Theorem 2.2]{WX11}.

Takahashi and Tucsnak \cite{TT1}   proved that  the operator $\mathbb{A}$ defined above generates an analytic semigroup on $\mathbb{H}^{2}(\Omega)$ when $\Omega = \mathbb{R}^{2}.$
When  $\Omega$ is a smooth bounded domain in  $\mathbb{R}^{2}$ the same result was proved by Takahashi \cite{TT2}. Later, Wang and Xin \cite{WX11} proved that the operator $\mathbb{A}$ generates an analytic semigroup on
$\mathbb{H}^{6/5}(\rt) \cap \mathbb{H}^{q}(\rt)$ if $q \geqslant 2$ and when the solid is a ball in $\rt$ the operator $\mathbb{A}$ generates an analytic semigroup on $\mathbb{H}^{2}(\rt) \cap \mathbb{H}^{q}(\rt)$ if $q \geqslant 6.$ In this article, as a corollary of our main result, we prove that the operator $\mathbb{A}$ generates an analytic semigroup on $\mathbb{H}^{q}(\Omega)$ for any $1 < q < \infty.$

Before we state our main result, we introduce the notion of maximal $L^{p}$-regularity. Let us consider the following Cauchy  problem:
\begin{equation} \label{eq:ab-cauchy}
z'(t) = A z(t) + f(t), \quad z(0) = z_{0},
\end{equation}
where $A$ is a closed, linear  densely defined unbounded operator in a Banach space $\mx$ with domain $\md(A),$  $f : \mathbb{R}^{+} \mapsto \mx$ is a locally integrable function and $z_{0} \in \mx.$

\begin{definition}
We say $A$ has maximal $L^{p}$-regularity property for $1 < p < \infty,$ on $[0,T),$ $0 < T \leqslant \infty,$ if for  $z_{0} = 0$ and for every $f \in L^{p}(0,T;\mx)$  there exists a unique $z\in W^{1,p}_{\rm loc}[0,\infty);\mx)$ satisfying \eqref{eq:ab-cauchy} almost everywhere and such that $z'$ and $Az$ belong to $L^{p}(0,T;\mx).$ We denote the class of all such operators by $\mathcal{MR}_{p}([0,T);\mx).$
\end{definition}

\begin{remark}
In the above definition we do not  assume that $z \in L^{p}(0,T;\mx).$ In fact, if $T < \infty$ or $0 \in \rho(A),$ where $\rho(A)$ is the resolvent set of $A,$ $z' \in L^{p}(0,T;\mx)$ can be replaced by $z \in W^{1,p}(0,T;\mx)$ (\cite[Theorem 2.4]{Dor1993}).
\end{remark}

We now state our first main result:
\begin{theorem} \label{th:main1}
Let  $1< p,q < \infty$ and $T < \infty.$ Then $\mathbb{A} \in \mathcal{MR}_{p}([0,T]; \mathbb{H}^{q}(\Omega)).$ In particular,  the operator $\mathbb{A}$ generates an analytic semigroup on  $\mathbb{H}^{q}(\Omega)$
for any $q\in (1,\infty).$
\end{theorem}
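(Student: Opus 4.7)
The natural strategy is to reduce the theorem to a statement about $\mathcal{R}$-sectoriality and invoke the Weis theorem: since $\mathbb{H}^q(\Omega)$ is a closed subspace of a UMD space, $\mathbb{A}$ has maximal $L^p$-regularity on $\mathbb{H}^q(\Omega)$ if and only if it is $\mathcal{R}$-sectorial with $\mathcal{R}$-angle strictly less than $\pi/2$. Hence the goal becomes: for some angle $\theta > \pi/2$ and some $\lambda_0 \geqslant 0$, the family
\[
\bigl\{\lambda(\lambda I-\mathbb{A})^{-1} : \lambda \in \lambda_0+\Sigma_\theta\bigr\}\subset \mathcal{L}(\mathbb{H}^q(\Omega))
\]
is $\mathcal{R}$-bounded. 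This will simultaneously yield analyticity of the semigroup and maximal $L^p$-regularity on $[0,T]$.

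I would first write out the resolvent problem for $\mathbb{A}$: given $F=(f,g_1,g_2)$ encoded as an element of $\mathbb{H}^q(\Omega)$ via the extension prescription, find $(u,\pi,\ell,\omega)$ solving a stationary version of \eqref{fsi-l}. The plan is to split the unknown $v = u\,\mathbf{1}_{\ofo} + (\ell+\omega\times y)\,\mathbf{1}_{\oso}$ as $v = v_S + v_R$, where $v_S$ solves a pure Stokes resolvent problem in $\ofo$ with \emph{homogeneous} Dirichlet data on $\poso \cup \partial\Omega$ and forcing $f$, while $v_R$ carries the rigid-body velocity boundary data $\ell+\omega\times y$ on $\poso$. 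For the Stokes part in the bounded domain $\ofo$ with smooth boundary, $\mathcal{R}$-sectoriality of the Stokes operator on $L^q$ is by now classical (Giga--Sohr, Geissert--Heck--Hieber, Noll--Saal), and will provide the required $\mathcal{R}$-bounds of $\lambda(\lambda I-A_{\rm Stokes})^{-1}$ on a sector of angle $>\pi/2$.

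For the rigid-body part, I would introduce a divergence-free lift $E(\ell,\omega)\in W^{2,q}(\ofo)$ of the boundary datum $\ell+\omega\times y$, vanishing on $\partial\Omega$; then $v_R - E(\ell,\omega)$ satisfies a Stokes resolvent problem with a right-hand side depending linearly on $(\ell,\omega)$ and $\lambda$. Plugging the resulting expression for $(u,\pi)$ into the two ODE equations of \eqref{fsi-l} produces, after elimination, a $6\times 6$ Schur complement matrix $M(\lambda)$ acting on $(\ell,\omega)\in\mathbb{R}^6$. Invertibility of $M(\lambda)$ on a shifted sector and $\mathcal{R}$-boundedness of $\lambda M(\lambda)^{-1}$ will come from two ingredients: (i) the leading term is $\lambda$ times the mass/inertia block, which dominates for $|\lambda|$ large along $\Sigma_\theta$; (ii) the correction from the fluid stress is, thanks to the Stokes resolvent estimates, of lower order in $\lambda$ on this sector. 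Since the body variables live in a finite-dimensional space, $\mathcal{R}$-boundedness there is automatic once uniform boundedness is established, and $\mathcal{R}$-sums on the fluid side are preserved by the bounded linear lift $E$.

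The main obstacle is the second ingredient: controlling in $\mathcal{R}$-norm the mapping $(\ell,\omega)\mapsto \int_{\poso}\sigma(u,\pi)n\,\dg$ uniformly in $\lambda$ on the sector. One cannot simply rely on pointwise resolvent bounds; one needs that the operator-valued family $\lambda\mapsto \lambda(\lambda-A_{\rm Stokes})^{-1}\Pi$, composed with the trace of the stress, defines an $\mathcal{R}$-bounded family from $\mathbb{R}^6$ into itself. This is where the $\mathcal{R}$-boundedness of the Stokes resolvent in the $W^{2,q}$-scale is used, together with continuity of the trace/stress map $W^{2,q}\to W^{1-1/q,q}(\poso)$. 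Once $\mathcal{R}$-boundedness of $\lambda M(\lambda)^{-1}$ is obtained, one reassembles $v$ and verifies, using the perturbation principles for $\mathcal{R}$-bounded families (e.g.\ products and sums of $\mathcal{R}$-bounded sets are $\mathcal{R}$-bounded), that $\{\lambda(\lambda-\mathbb{A})^{-1}\}$ is $\mathcal{R}$-bounded on a sector of angle $>\pi/2$ after possibly translating by some $\lambda_0>0$. Applying the Weis theorem then gives maximal $L^p$-regularity on $[0,T]$ with $T<\infty$, and in particular analyticity of the semigroup generated by $\mathbb{A}$ on $\mathbb{H}^q(\Omega)$.
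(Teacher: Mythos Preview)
Your strategy is sound and would succeed, but it differs from the paper's route and contains one subtlety you have not identified.

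\textbf{Comparison with the paper.} The paper does not eliminate the rigid variables via a Schur complement. Instead it reformulates the resolvent problem on the product space $\mathcal{X}=\mathcal{V}^q_n(\ofo)\times\mathbb{C}^3\times\mathbb{C}^3$ using the Leray projector $\mathcal{P}$ on $\ofo$, the Dirichlet lift $D(\ell,\omega)$ of the rigid boundary datum, and an explicit pressure formula (Neumann operators). This yields an operator $\mathcal{A}_{FS}$ on $\mathcal{X}$ which is split as $\widetilde{\mathcal{A}}_{FS}+B_{FS}$, where $\widetilde{\mathcal{A}}_{FS}$ is block upper-triangular with the Stokes operator $A_0$ in the fluid block and zero in the rigid block; its resolvent is computed explicitly and its $\mathcal{R}$-sectoriality follows directly from that of $A_0$. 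The coupling term $B_{FS}$ (containing all the stress integrals) is of \emph{finite rank}, hence $\widetilde{\mathcal{A}}_{FS}$-bounded with relative bound zero, and the Kunstmann--Weis perturbation result immediately gives $\mathcal{R}$-sectoriality of $\mathcal{A}_{FS}$ on a shifted sector. This is cleaner than your Schur-complement route because no explicit analysis of a $\lambda$-dependent $6\times 6$ matrix is needed; the finite-rank perturbation lemma absorbs everything. Your approach, by contrast, is closer in spirit to \cite{Gei13} and gives a more hands-on picture of the coupling.

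\textbf{A subtlety in your argument.} Your claim that ``the leading term is $\lambda$ times the mass/inertia block'' and ``the correction from the fluid stress is \dots\ of lower order in $\lambda$'' is not quite correct. When you lift the rigid datum and solve the Stokes resolvent, the resulting pressure contains a term proportional to $\lambda$ (in the paper's notation, $-\lambda N_S((\ell+\omega\times y)\cdot n)$). Integrating this pressure against $n$ and $y\times n$ over $\poso$ produces a contribution $\lambda\,\mathbb{M}\binom{\ell}{\omega}$, where $\mathbb{M}$ is the \emph{added-mass} matrix. Thus the true leading part of your Schur matrix is $\lambda\,\mathbb{K}$ with $\mathbb{K}=\mathrm{diag}(mI_3,J(0))+\mathbb{M}$, and you must show $\mathbb{K}$ is invertible; this is done in the paper (Lemma~3.9) by showing $\mathbb{M}$ is symmetric and positive semidefinite. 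Once you incorporate the added-mass correction, the remainder of $M(\lambda)$ is indeed lower order (bounded as $|\lambda|\to\infty$ on the sector), and your argument goes through: invertibility of $M(\lambda)$ for large $|\lambda|$, uniform boundedness of $\lambda M(\lambda)^{-1}$ on a shifted sector, and then $\mathcal{R}$-boundedness for free by finite-dimensionality.
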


To prove the above result we use  the  characterization of maximal $L^{p}$ regularity  due to Weis (\cite[Theorem 4.2]{Weis01}), which says that maximal $L^{p}$ regularity property in a UMD Banach space, in particular for $L^{q}$ spaces,  is equivalent to the $\mr$-sectoriality property of the operator (see \cref{sec:R} for definition and properties of $\mr$-sectorial operators).

The maximal $L^{p}$-$L^{q}$ regularity property in finite time interval of the system \eqref{fsi-l}, when $\Omega = \mathbb{R}^{3},$ was already proved in  \cite{Gei13,Got12}.  However, the approach of those papers is different from our approach. In fact, in those papers, fluid and structure equations are treated separately and maximal $L^{p}$-$L^{q}$ regularity property of the linear system \eqref{fsi-l} is proved by a fixed point argument.  In our approach, we solve the fluid and structure equations simultaneously. In the study of fluid-structure interactions, this method is known as a monolithic approach. We refer to Maity and Tucsnak \cite{MT17} where a similar approach has been used to prove maximal $L^{p}$-$L^{q}$ regularity for several other fluid structure models.

The main advantage of such approach is that, by studying resolvent of the  linear operator $\mathbb{A},$ we can conclude that the operator $\mathbb{A}$ generates a $C^{0}$-semigroup of negative type. This allows us to obtain the  maximal $L^{p}$-$L^{q}$ regularity of the  system  \eqref{fsi-l} on $[0,\infty).$  As a consequence, we obtain global existence and uniqueness  for the full non-linear system \eqref{eq:mainsys} under a smallness condition on the initial data.

In order to state global existence and uniqueness result, we introduce some notation. Firstly
$W^{s,q}(\Omega)$, with $s\geqslant 0$ and $q>1$, denote the usual Sobolev spaces.
We introduce the space
\[ L^{q}_{m}(\Omega) = \left\{ f \in L^{q}(\Omega) \mid  \int_{\Omega} f  = 0 \right\} \]
and we set
\[ W^{s,q}_{m}(\Omega) = W^{s,q}(\Omega) \cap L^{q}_{m}(\Omega).  \]
 Let $k \in \mathbb{N}$.
For every $0 < s < k, $ $1\leqslant p < \infty$, $1\leqslant q < \infty$, we define the Besov spaces by real interpolation of Sobolev spaces
\begin{align*}
B^{s}_{q,p}(\Omega) = (L^{q}(\Omega), W^{k,q}(\Omega))_{s/k,p}.
\end{align*}
We refer to \cite{Triebel-2} for more details on Besov spaces.  We also need a definition of Sobolev spaces in the time dependent domain $\Omega_{F}(t)$. Let $\Lambda(t,\cdot)$ be a $C^{1}$-diffeomorphism from $\ofo$ onto $\oft$ such that all the derivatives up to second order in space variable and all the derivatives up to first order in time variable exist are continuous. For all functions $v(t,\cdot) : \oft \mapsto \mathbb{R},$ we denote $\widehat v(t,y) = v(t, \Lambda(t,y)).$ Then for any $1 < p,q < \infty$  we define
\begin{align*}
&L^{p}(0,T;L^{q}(\Omega_{F}(\cdot))) = \left\{ v  \mid \widehat v \in L^{p}(0,T;L^{q}(\ofo))\right\}, \\
&L^{p}(0,T;W^{2,q}(\Omega_{F}(\cdot))) = \left\{ v  \mid \widehat v \in L^{p}(0,T;W^{2,q}(\ofo))\right\}, \\
&W^{1,p}(0,T;L^{q}(\Omega_{F}(\cdot))) = \left\{ v  \mid \widehat v \in W^{1,p}(0,T;L^{q}(\ofo))\right\}, \\
&C([0,T];B^{2(1-1/p)}_{q,p}(\Omega_{F}(\cdot))) = \left\{ v  \mid \widehat v \in C([0,T];B^{2(1-1/p)}_{q,p}(\ofo)\right\}.
\end{align*}

\begin{theorem} \label{th-g}
Let $1 < p,q < \infty$ satisfying the conditions $\ds \frac{1}{p} + \frac{1}{2q} \neq 1$ and $\ds \frac{1}{p} + \frac{3}{2q} \leqslant \frac{3}{2}.$ Let $\eta \in (0,\eta_{0}),$ where $\eta_{0}$ is the constant introduced in \cref{th:exp-st}.
Then there exist two constants $\delta_{0} > 0$ and $C> 0,$ depending on $p,q,\eta$ and $\ofo,$ such that, for all  $\delta \in (0,\delta_{0})$ and for all   $(u_{0},\ell_{0},\omega_{0}) \in  B^{2(1-1/p)}_{q,p}(\ofo)^{3} \times \rt \times \rt$ satisfying the compatibility conditions
\begin{align*}
&\mathrm{div} \  u_{0} = 0 \mbox{ in } \ofo,  \\
& u_{0} = \ell_{0} + \omega_{0} \times y \mbox{ on } \poso, \quad u_{0} = 0 \mbox{ on } \partial\Omega \mbox{ if } \frac{1}{p} + \frac{1}{2q} < 1  \\
\mbox{ and } & u_{0} \cdot n = (\ell_{0} + \omega_{0} \times y) \cdot n \mbox{ on } \poso, \quad \quad u_{0} \cdot n = 0 \mbox{ on } \partial \Omega \mbox{ if } \frac{1}{p} + \frac{1}{2q} > 1,
\end{align*}
and
\begin{align}
\|u_{0}\|_{B^{2(1-1/p)}_{q,p}(\ofo)^{3}} + \|\ell_{0}\|_{\rt} + \|\omega_{0}\|_{\rt} \leqslant \delta,
\end{align}
the system \eqref{eq:mainsys} admits a unique strong solution $(u,\pi,\ell, \omega)$ in the class of functions satisfying
\begin{multline} \label{es:main}
\|e^{\eta (\cdot)}u \|_{L^{p}(0,\infty;W^{2,q}(\Omega_{F}(\cdot)))^{3}} + \|e^{\eta (\cdot)}u \|_{W^{1,p}(0,\infty;L^{q}(\Omega_{F}(\cdot)))^{3}} + \|e^{\eta (\cdot)}u\|_{L^{\infty}(0,\infty;B^{2(1-1/p)^{3}}_{q,p}(\Omega_{F}(\cdot)))}   \\
+ \|e^{\eta (\cdot)} \pi \|_{L^{p}(0,\infty;W_{m}^{1,q}(\Omega_{F}(\cdot)))}
 +\|a\|_{L^{\infty}(0,\infty;\rt)} + + \|e^{\eta (\cdot)} a' \|_{L^{p}(0,\infty;\rt)} \\  + \|e^{\eta (\cdot)} a'' \|_{L^{p}(0,\infty;\rt)} + \|e^{\eta (\cdot)}\omega \|_{W^{1,p}(0,\infty;\rt)} \leqslant C\delta.
\end{multline}
Moreover, $\mathrm{dist} \left(\ost,\partial\Omega\right) \geqslant \alpha/2  $ for all $t \in [0,\infty).$
In particular, we have
\begin{equation*}
\|u(t,\cdot)\|_{B^{2(1-1/p)}_{q,p}(\oft)} + \|a'(t)\|_{\rt} + \|\omega(t)\|_{\rt} \leqslant C \delta e^{-\eta t}.
\end{equation*}
\end{theorem}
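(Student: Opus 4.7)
The plan is to freeze the moving domain, recast \eqref{eq:mainsys} as a perturbation of \eqref{fsi-l} on the fixed reference domain $\ofo$, and then combine the maximal $L^p$-$L^q$ regularity of $\mathbb{A}$ from Theorem \ref{th:main1} with the exponential stability from Theorem \ref{th:exp-st} in a contraction argument on the exponentially weighted norm appearing in \eqref{es:main}. First I would introduce a change of variables $X(t,\cdot) : \ofo \to \oft$ built from the rigid motion $y \mapsto a(t) + Q(t)y$ near $\poso$, localized by a smooth cutoff that makes $X$ the identity near $\partial\Omega$; since $\Omega$ is bounded, a plain rigid change of variables is not admissible. For small $(a,Q-I_3)$ this map is a $C^1$-diffeomorphism, and setting $\hat u(t,y) = Q(t)^\top u(t,X(t,y))$, $\hat\pi(t,y) = \pi(t,X(t,y))$, $\hat\ell = Q^\top \ell$, $\hat\omega = Q^\top \omega$ transforms \eqref{eq:mainsys} into a system on $\ofo$ whose linear part is exactly \eqref{fsi-l}, the remainder being nonlinear in $(\hat u, \nabla \hat u, \nabla^2 \hat u, \nabla \hat\pi, \hat\ell, \hat\omega, a, Q-I_3)$ and quadratic at the origin. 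The divergence of $\hat u$ is no longer zero but of the schematic form $\mathrm{div}\, \hat u = \mathrm{div}(K(a,Q)\hat u)$ with $K$ vanishing at $(0,I_3)$; one absorbs this either by a divergence-preserving variant of $X$, or by a right inverse of the divergence of Bogovski type on $\ofo$, as in \cite{Gei13}.

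Next I would introduce the solution space
\begin{align*}
\mx_\eta = \Bigl\{ (\hat u, \hat\pi, \hat\ell, \hat\omega, a, Q) \;\Big|\; &\;\text{the expression on the LHS of \eqref{es:main}}\\ &\text{in the $(y,\hat u,\hat\pi,\hat\ell,\hat\omega)$ variables is finite} \Bigr\},
\end{align*}
equipped with that norm, and show that multiplication by $e^{\eta t}$ intertwines with maximal regularity of $\mathbb{A}$ provided $\eta<\eta_0$: indeed, if $w(t)=e^{\eta t}z(t)$ solves the shifted Cauchy problem $w' = (\mathbb{A}+\eta I)w + e^{\eta t}f$ with $z_0$ as initial data, the exponential stability of $e^{-t\mathbb{A}}$ (giving $\sigma(\mathbb{A})\subset\{\mathrm{Re}\,\lambda \geqslant \eta_0\}$ in the sense of Theorem \ref{th:exp-st}) and the perturbation theorem for $\mr$-sectorial operators yield maximal $L^p$-$L^q$ regularity for $\mathbb{A}+\eta I$ on $[0,\infty)$, with an estimate also controlling the $L^\infty$ Besov norm of the trace on $t=0$, so Theorem \ref{th:main1} turns into a global-in-time a priori estimate for the exponentially weighted linear problem.

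The fixed point map $\mathcal{T}$ then sends $(\hat v,\hat q,\hat m,\hat w,b,R)\in\mx_\eta$ to the solution $(\hat u,\hat\pi,\hat\ell,\hat\omega,a,Q)$ of \eqref{fsi-l} with right-hand sides $(f,g_1,g_2)$ equal to the nonlinear remainders evaluated at $(\hat v,\hat q,\hat m,\hat w,b,R)$, and with $(a,Q)$ reconstructed by integration of $\hat m$ and \eqref{eqQ}. To show that $\mathcal{T}$ maps a ball of radius $C\delta$ into itself and is a contraction there for $\delta$ small, one needs product and superposition estimates bounding the quadratic remainders in $L^p(0,\infty;L^q(\ofo))$ by $\|\cdot\|_{\mx_\eta}^2$. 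This is precisely the step that uses the hypotheses $\frac{1}{p}+\frac{3}{2q}\leqslant\frac{3}{2}$ (which gives the critical Sobolev/Besov embedding $B^{2(1-1/p)}_{q,p}\hookrightarrow L^\infty$-type estimates in dimension $3$ needed for $(u\cdot\nabla)u$ and the rotation nonlinearity $(J\omega)\times\omega$) and $\frac{1}{p}+\frac{1}{2q}\neq 1$ (to avoid the borderline value for the trace characterization of the compatibility conditions and the Dirichlet boundary values).

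The main obstacle will be this step three: establishing the quadratic nonlinear estimates globally in time in the weighted norm, uniformly for all $\delta$ small. In particular, lower-order terms coming from the cutoff change of variables, such as $(Q-I_3)\nabla^2 \hat u$ and $a(t)\cdot\nabla\hat u$, do not vanish quadratically a priori; the crucial observation is that $\|a\|_{L^\infty}\leqslant\int_0^\infty|\hat\ell|\,ds\leqslant C_\eta\|e^{\eta\cdot}\hat\ell\|_{L^p}$ and a similar bound for $Q-I_3$, so that the bilinear estimates do close in $\mx_\eta$. Once the contraction is obtained, its unique fixed point gives the global strong solution satisfying \eqref{es:main}. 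Taking $\delta$ small, the bound $\|a\|_{L^\infty}\leqslant C\delta$ guarantees $\mathrm{dist}(\ost,\partial\Omega)\geqslant\alpha/2$, and the exponential decay of $\|u(t)\|_{B^{2(1-1/p)}_{q,p}(\oft)}$, $|a'(t)|$, $|\omega(t)|$ follows from the weighted $L^\infty$-in-time control already included in \eqref{es:main}, finishing the proof.
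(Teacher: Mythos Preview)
Your proposal is correct and follows essentially the same route as the paper: a cutoff-localized rigid change of variables $X(t,\cdot)$ (the paper builds it from a divergence-free vector field $\Lambda$, so that $\det\nabla X\equiv 1$), the transformed unknowns $\widetilde u=Q^{-1}u(\cdot,X)$, $\widetilde\pi=\pi(\cdot,X)$, $\widetilde\ell=Q^{-1}a'$, $\widetilde\omega=Q^{-1}\omega$, an exponentially weighted maximal $L^p$-$L^q$ estimate for the linear problem (obtained by shifting the generator by $\eta<\eta_0$), and a Banach fixed point on the ball $\mathcal{S}_\gamma$; the key nonlinear closure is exactly the observation you single out, namely $\|a\|_{L^\infty}+\|Q-I_3\|_{L^\infty}\leqslant C_\eta\bigl(\|e^{\eta\cdot}\widetilde\ell\|_{L^p}+\|e^{\eta\cdot}\widetilde\omega\|_{L^p}\bigr)$. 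The only minor deviation is that the paper does not invoke a Bogovski operator for the residual divergence $\mathrm{div}\,\widetilde u=\mathrm{div}\,\mathcal{H}$: instead it proves the linear maximal regularity theorem directly for a system with inhomogeneous divergence $\mathrm{div}\,u=\mathrm{div}\,h$ (by subtracting $h$ and reducing to the divergence-free case), which is equivalent in effect.
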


\begin{remark}
When $p =q =2,$ the above result was proved in \cite[Corollary 9.2]{TT2}.
\end{remark}

\begin{remark}
Our proof of \cref{th-g} also applies to the $2$ dimensional case.  In this case, we have to  choose $1 < p,q < \infty$ such that $\ds \frac{1}{p} + \frac{1}{2q} \neq 1$ and $\ds \frac{1}{p} + \frac{1}{2q} \leqslant \frac{3}{2}.$
\end{remark}

The plan of this paper is as follows.  In \cref{sec:R}, we recall the definition and some basic properties  of $\mr$-sectorial operators.  In \cref{sec:RA}, we prove \cref{th:main1}. The stability of the operator $\mathbb{A}$ is proved in \cref{sec:exp}. Maximal $L^{p}$-$L^{q}$ regularity of the linear fluid structure system on $(0,\infty)$
is studied in \cref{sec:maxlp}. Finally, in \cref{sec:gr}  we prove \cref{th-g}.

%%%%%%%%%%%%%%%%%%%%%%%%%%%%%%%%%%%%%%%%%%%%%%%%%%%%%%%%%%%%%%%%%%%%%%%%%%%%%%%%%%%%%%%%%%%%%%%%%%%%%%%%%%%%%%%%%%%%%%%

\section{Some background on $\mr$-sectorial operators} \label{sec:R}
In this section we recall some definitions and  basic results concerning maximal regularity and  $\mathcal{R}$-boundedness in Banach spaces. For detailed information on these subjects we refer to \cite{Weis01,KW04,DenkHieberPruss} and  references therein.  For $\theta \in (0,\pi)$ we define  the sector $\Sigma_{\theta}$ in the complex plane by
\begin{align}
\Sigma_{\theta} =  \{ \lambda \in \mathbb{C} \setminus \{0\} \ \ \mid \ \  |\mathrm{arg} \lambda| < \theta \}.
\end{align}

In order to state  Weis' theorem concerning maximal $L^{p}$-regularity of the Cauchy problem \eqref{eq:ab-cauchy} we need to introduce so-called $UMD$ spaces.

\begin{definition}
Let $\mx$ be a Banach space. The Hilbert transform of a function $f \in {\mathcal S}(\mathbb{R};\mx),$ the Schwartz space of $\mx$-valued rapidly decreasing functions, is defined by
\begin{align*}
Hf(t) = \frac{1}{\pi} \lim_{\epsilon \mapsto 0} \int_{|s| > \epsilon} \frac{f(t-s)}{s} \ ds, \quad t \in \mathbb{R}.
\end{align*}
A Banach space  $\mx$ is said to be of class ${\mathcal {HT}}$, if the Hilbert transform is bounded on $L^{p}(\mathbb{R};\mx)$ for some (thus all) $1 < p < \infty.$
\end{definition}

These spaces are also called $UMD$ Banach spaces, where $UMD$ stands for \textit{unconditional martingale differences}.  Hilbert spaces, all closed subspaces and quotient spaces of $L^{q}(\Omega)$ with $1 < q  < \infty$ are examples of $UMD$ spaces. We refer the reader to \cite[pp. 141-147]{Amann} for more information about $UMD$ spaces.

We next introduce the notion of $\mathcal{R}$-bounded  family of operators and $\mathcal{R}$-sectoriality of a densely defined linear operator.
\begin{definition} [${\mathcal R}$ - bounded family of operators]
Let $\mx$ and $\my$ be  Banach spaces. A family of operators $\mathcal{T} \subset \mathcal{L}(\mx,\my)$ is called $\mathcal{R}$- bounded on $\mathcal{L}(\mx,\my),$ if there exist constants $C >0$ and $p \in [1,\infty)$ such that for every $n \in \mathbb{N},$ $\{T_{j}\}_{j=1}^{n} \subset \mathcal{T},$ $\{x_{j}\}_{j=1}^{n} \subset \mx$  and for all sequences $\{r_{j}(\cdot)\}_{j=1}^{n}$ of independent, symmetric, $\{ -1,1\}$ valued random variables on $[0,1]$, we have
\begin{align*}
\left\|\sum_{j=1}^{n}r_{j}(\cdot) T_{j} x_{j}\right\|_{L^{p}([0,1];\my)} \leqslant C \left\|\sum_{j=1}^{n}r_{j}(\cdot)  x_{j}\right\|_{L^{p}([0,1];\mx)}.
\end{align*}
The smallest such $C$ is called ${\mathcal R}$-bound of $\mathcal{T}$ on $\mathcal{L}(\mx,\my)$ and denoted by $\mathcal{R}_{\mathcal{L}(\mx,\my)}(\mathcal{T}).$
\end{definition}

\begin{definition}[$\mathcal{R}$-sectorial operator]
Let $A$ be a densely defined closed linear operator on a Banach space $\mx$ with domain $\mathcal{D}(A).$ Then $A$ is said sectorial of angle $\theta \in (0,\pi)$ if $\sigma(A) \subseteq \overline{\Sigma_{\theta}}$ and  for any $\theta_{1} > \theta$ the set $\ds \left\{\lambda(\lambda I -  A)^{-1} \ \ \mid \ \ \theta_{1} \leqslant |\mathrm{arg}(\lambda)| \leqslant \pi \right\}$ is bounded. If this set is $\mr$-bounded then $A$ is $\mr$-sectorial of angle $\theta.$
\end{definition}

We now state several useful properties concerning $\mathcal{R}$-boundedness, which will be used later on

\begin{proposition} $ $ \label{rem:Rbd}
\begin{itemize}
\item[(1)] If $\mathcal{T} \subset \mathcal{L}(\mx,\my)$ is $\mathcal{R}$-bounded then it is uniformly bounded with
\begin{align*}
\mathrm{sup} \left\{ \|T\| \mid T \in \mathcal{T} \right\} \leqslant \mathcal{R}_{\mathcal{L}(\mx,\my)}(\mathcal{T}).
\end{align*}
\item[(2)] If $\mx$ and $\my$ are Hilbert spaces, $\mathcal{T} \subset \mathcal{L}(\mx,\my)$ is $\mr$-bounded if and only if $\mathcal{T}$  is uniformly bounded.
\item[(3)] Let $\mx$ and $\my$ be Banach spaces and let $\mathcal{T}$ and $\mathcal{S}$  be $\mathcal{R}$-bounded families on $\mathcal{L}(\mx,\my).$ Then $\mathcal{T} + \mathcal{S}$ is also $\mathcal{R}$-bounded  on $\mathcal{L}(\mx,\my),$ and
\begin{align*}
{\mathcal R}_{\mathcal{L}(\mx,\my)}(\mathcal{T} + \mathcal{S}) \leqslant {\mathcal R}_{\mathcal{L}(\mx,\my)}(\mathcal{T} ) +  {\mathcal R}_{\mathcal{L}(\mx,\my)}( \mathcal{S}).
\end{align*}
\item[(4)]  Let $\mx,\my$ and $\mz$ be Banach spaces and let $\mathcal{T}$ and $\mathcal{S}$  be $\mathcal{R}$-bounded families on $\mathcal{L}(\mx,\my)$ and $\mathcal{L}(\my,\mz)$ respectively. Then $\mathcal{S}\mathcal{T}$ is  $\mathcal{R}$-bounded  on $\mathcal{L}(\mx,\mz),$ and
\begin{align*}
{\mathcal R}_{\mathcal{L}(\mx,\mz)}(\mathcal{S} \mathcal{T} ) \leqslant {\mathcal R}_{\mathcal{L}(\mx,\my)}(\mathcal{T} )   {\mathcal R}_{\mathcal{L}(\my,\mz)}( \mathcal{S}).
\end{align*}
\end{itemize}
\end{proposition}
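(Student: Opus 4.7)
The plan is to reduce all four assertions to direct manipulations of the defining inequality for $\mathcal{R}$-boundedness, the only one with substantive content being (2). The structure is: derive (1), (3), (4) essentially tautologically from the definition, and then handle (2) via a probabilistic orthogonality argument specific to the Hilbert-space setting.

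For (1), I would simply test the definition with $n=1$. For a single $T\in\mathcal{T}$ and $x\in\mx$, since $|r_{1}|\equiv 1$ on $[0,1]$ one has $\|r_{1}Tx\|_{L^{p}([0,1];\my)}=\|Tx\|_{\my}$ and $\|r_{1}x\|_{L^{p}([0,1];\mx)}=\|x\|_{\mx}$, which immediately yields $\|T\|\leqslant \mathcal{R}_{\mathcal{L}(\mx,\my)}(\mathcal{T})$.

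For (3) and (4) I would invoke the defining inequality twice. In (3), given $T_{j}\in\mathcal{T}$ and $S_{j}\in\mathcal{S}$, the triangle inequality in $L^{p}([0,1];\my)$ applied to $\sum_{j}r_{j}(T_{j}+S_{j})x_{j}$ splits the sum into two pieces, and I apply the two $\mathcal{R}$-bounds separately to get $\mathcal{R}(\mathcal{T})+\mathcal{R}(\mathcal{S})$. In (4), I set $y_{j}:=T_{j}x_{j}\in\my$, apply the defining inequality for $\mathcal{S}$ to the vectors $\{y_{j}\}_{j=1}^{n}$, and then apply the defining inequality for $\mathcal{T}$ to the resulting $L^{p}$-norm of $\sum_{j}r_{j}T_{j}x_{j}$; the two bounds multiply to give $\mathcal{R}(\mathcal{S})\mathcal{R}(\mathcal{T})$.

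The main point is (2), where I must upgrade uniform boundedness to $\mathcal{R}$-boundedness using the Hilbert structure. My plan is first to pass from $L^{p}$ to $L^{2}$ on $[0,1]$ via the Kahane--Khintchine inequality, which gives equivalence of all $L^{p}$-norms of Rademacher sums up to a $p$-dependent constant, so that it suffices to work with $p=2$. In a Hilbert space, independence and the mean-zero symmetry of the $r_{j}$ give the orthogonality identity
\begin{equation*}
\int_{0}^{1}\Bigl\|\sum_{j=1}^{n}r_{j}(s)\,z_{j}\Bigr\|_{\mathcal{H}}^{2}\,ds \;=\; \sum_{j=1}^{n}\|z_{j}\|_{\mathcal{H}}^{2},
\end{equation*}
valid in any Hilbert space $\mathcal{H}$. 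Applying this to $z_{j}=T_{j}x_{j}$ in $\my$ and bounding $\|T_{j}x_{j}\|_{\my}\leqslant M\|x_{j}\|_{\mx}$ with $M:=\sup_{T\in\mathcal{T}}\|T\|$, then recognising the resulting sum $\sum_{j}\|x_{j}\|_{\mx}^{2}$ as $\int_{0}^{1}\bigl\|\sum_{j}r_{j}(s)x_{j}\bigr\|_{\mx}^{2}\,ds$, gives $\mathcal{R}(\mathcal{T})\leqslant C\,M$ for a universal constant $C$. The obstacle here is precisely the use of the Hilbert-space orthogonality identity: it has no analogue in general Banach spaces, which is exactly why the UMD property is needed in the main results of the paper.
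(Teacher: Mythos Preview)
The paper does not actually prove this proposition: it is stated without proof in Section~2 as standard background on $\mathcal{R}$-boundedness, with the general references \cite{Weis01,KW04,DenkHieberPruss} given at the head of the section. Your argument is correct and is precisely the standard one found in those references; in particular, (1), (3), (4) fall out of the definition as you describe, and your treatment of (2) via the orthogonality identity for Rademacher sums in a Hilbert space is the expected proof. One minor simplification: since the definition in the paper only requires the inequality for \emph{some} $p\in[1,\infty)$, you may choose $p=2$ from the outset in (2) and dispense with the Kahane--Khintchine step entirely, obtaining directly $\mathcal{R}(\mathcal{T})\leqslant \sup_{T\in\mathcal{T}}\|T\|$.
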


The following characterization of maximal $L^{p}$ regularity is due to Weis (\cite[Theorem 4.2]{Weis01})
\begin{theorem} \label{thm:Weis}
Let $\mx$ be a Banach space of class ${\mathcal{HT}},$ $1 < p < \infty$ and let $A$ be a closed, densely defined unbounded operator with domain $\md(A).$
Then $A$ has maximal $L^{p}$-regularity on $\mathbb{R}^{+}$ if and only if
\begin{equation}\label{give_number}
\mr_{\mathcal{L}(\mx)}\left\{\lambda (\lambda - A)^{-1} \mid \lambda  \in \Sigma_{\theta}\right\} \leqslant C  \mbox{ for some } \theta > \pi/2.
\end{equation}
In other words, $A$ has maximal $L^{p}$-regularity if and only if $-A$ is $\mr$-sectorial of angle $\theta < \pi/2.$
\end{theorem}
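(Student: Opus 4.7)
The plan is to reformulate maximal $L^p$-regularity as the $L^p(\mathbb{R};\mx)$-boundedness of a convolution operator with operator-valued kernel, and then apply an operator-valued Fourier multiplier theorem in UMD spaces. Concretely, if $A$ has maximal $L^p$-regularity on $\mathbb{R}^+$, a classical argument (Dore, de Simon) shows that $A$ must generate a bounded analytic semigroup $e^{tA}$ on $\mx$, so that for $f \in L^p(\mathbb{R}^+;\mx)$ the mild solution is $z(t)=\int_0^t e^{(t-s)A} f(s)\,ds$, and maximal regularity is equivalent to the convolution operator $Sf(t)=A\int_0^t e^{(t-s)A}f(s)\,ds$ being bounded on $L^p(\mathbb{R}^+;\mx)$. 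Extending by zero to $\mathbb{R}$, the operator $S$ becomes a translation-invariant operator on $L^p(\mathbb{R};\mx)$ whose Fourier symbol is $M(\tau)=A(i\tau-A)^{-1}=-I+i\tau(i\tau-A)^{-1}$.

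For the nontrivial direction ($\mr$-sectoriality $\Rightarrow$ maximal regularity), I would invoke the operator-valued Mikhlin multiplier theorem: if $\mx$ is a UMD space and $M\in C^1(\mathbb{R}\setminus\{0\};\mathcal{L}(\mx))$ satisfies that both $\{M(\tau):\tau\neq 0\}$ and $\{\tau M'(\tau):\tau\neq 0\}$ are $\mr$-bounded, then $M$ is a bounded $L^p$-multiplier for every $p\in(1,\infty)$. Applied to $M(\tau)=i\tau(i\tau-A)^{-1}$, the first family is $\mr$-bounded exactly by the hypothesis \eqref{give_number} restricted to the imaginary axis, while $\tau M'(\tau)=i\tau(i\tau-A)^{-1}-(i\tau(i\tau-A)^{-1})^2$ is $\mr$-bounded by items (3)--(4) of \cref{rem:Rbd} together with the same hypothesis. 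Extending to a sector follows from the analyticity of the resolvent and a standard Cauchy-integral argument.

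For the converse direction, I would argue that maximal $L^p$-regularity means that the multiplier $M(\tau)=A(i\tau-A)^{-1}$ is bounded as a Fourier multiplier on $L^p(\mathbb{R};\mx)$. A transference / randomization argument then shows that the family $\{M(\tau):\tau\neq 0\}$ is automatically $\mr$-bounded: one approximates $\mathbb{R}$ by the integers via the Coifman--Weiss transference principle and uses that uniform boundedness of Fourier multipliers on $L^p(\mathbb{T};\mx)$ together with Kahane's contraction principle forces $\mr$-boundedness of the symbols. The $\mr$-boundedness along the imaginary axis then extends to a sector $\Sigma_\theta$ with $\theta>\pi/2$ via the resolvent identity, interpolation and analyticity.

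The main obstacle is the operator-valued Mikhlin multiplier theorem itself, which is the core of Weis's contribution: its proof requires a genuine use of the UMD hypothesis, going through a Littlewood--Paley/dyadic decomposition $M=\sum_{k\in\mathbb{Z}} M_k$ with $M_k$ localized to $|\tau|\sim 2^k$, bounding $\|\sum_k \epsilon_k M_k\hat f\|$ in $L^p$ by $\mr$-bounds times randomized norms, and finally invoking the equivalence between the UMD property and the boundedness of vector-valued Littlewood--Paley square functions. The remaining steps (reduction to convolution, translation into a multiplier problem, passage from imaginary axis to a sector) are comparatively routine.
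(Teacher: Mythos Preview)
Your sketch is essentially correct and follows the standard route due to Weis: reduce maximal regularity to $L^p$-boundedness of the convolution operator $f\mapsto A\int_0^{\cdot} e^{(\cdot-s)A}f(s)\,ds$, identify its Fourier symbol as $M(\tau)=A(i\tau-A)^{-1}$, and then appeal to the operator-valued Mikhlin theorem in UMD spaces for the sufficiency direction, with a transference/randomization argument for necessity. The verification that $\{\tau M'(\tau)\}$ is $\mr$-bounded via \cref{rem:Rbd}(3)--(4) is the right computation.

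However, you should be aware that the paper does \emph{not} prove this theorem. It is simply stated with the attribution ``due to Weis (\cite[Theorem 4.2]{Weis01})'' and used as a black box. So there is nothing to compare your argument against here: the paper treats \cref{thm:Weis} as a quoted result from the literature, and all the actual work in the paper lies in verifying the $\mr$-sectoriality hypothesis for the specific fluid--structure operator $\mathbb{A}$ (Sections~\ref{sec:RA}--\ref{sec:exp}). Your effort in outlining the proof of the multiplier theorem is not wasted, but it is orthogonal to what the paper is doing.
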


Next we state  a  perturbation result  due to Kunstmann and Weis \cite{WeisKun01}, which states that $\mr$-boundedness is preserved by $A$ small perturbations.
\begin{proposition} \label{pr:perturb}
Let $\mx$ be a Banach space  let $A$ be a closed, densely defined unbounded operator with domain $\md(A).$ Let  us assume that there exist $\gamma_{0} \geqslant  0$ and $\theta \in (0,\pi)$ such that
\begin{equation*}
\mr_{\mathcal{L}(\mx)}\left\{\lambda (\lambda - A)^{-1} \mid \lambda  \in \gamma_{0} + \Sigma_{\theta}\right\} \leqslant C.
\end{equation*}
 Let $B$ be a $A$-bounded operator with relative bound zero, i.e., for all $\delta > 0$ there exists $C(\delta) > 0$ such that
\begin{align}
\|Bz\| \leqslant \delta \|Az\| + C(\delta) \|z\| \mbox{ for all }   z \in \md(A).
\end{align}
Then there there exists  $\mu_{0} \geqslant \gamma_{0}$ such that
\begin{equation*}
\mr_{\mathcal{L}(\mx)}\left\{\lambda (\lambda - (A+B))^{-1} \mid \lambda  \in \mu_{0} + \Sigma_{\theta}\right\} \leqslant C.
\end{equation*}
\end{proposition}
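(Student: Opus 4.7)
The plan is to implement the classical Neumann-series perturbation argument, but tracking $\mr$-bounds instead of operator norms. Since $B$ is $A$-bounded, $\md(A+B) = \md(A)$, and for $\lambda \in \rho(A)$ one has the algebraic factorization
$$\lambda - (A+B) = \bigl[I - B(\lambda - A)^{-1}\bigr](\lambda - A),$$
so that, whenever $I - B(\lambda - A)^{-1}$ is invertible on $\mx$, $\lambda \in \rho(A+B)$ and
$$\lambda(\lambda - (A+B))^{-1} = \bigl[\lambda(\lambda - A)^{-1}\bigr]\bigl[I - B(\lambda - A)^{-1}\bigr]^{-1}.$$
By the product rule of \cref{rem:Rbd}(4), the desired $\mr$-estimate will follow once we exhibit $\mu_0 \geqslant \gamma_0$ such that $\mr\{B(\lambda - A)^{-1} : \lambda \in \mu_0 + \Sigma_\theta\} \leqslant 1/2$: indeed, the inverse is then represented by the norm-convergent Neumann series $\sum_{k\geqslant 0}[B(\lambda - A)^{-1}]^k$, whose $\mr$-bound is controlled by $\sum_k (1/2)^k = 2$ via parts (3) and (4) of \cref{rem:Rbd}.

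The heart of the argument is therefore the $\mr$-estimate for $\{B(\lambda - A)^{-1}\}$. Given $\lambda_1,\ldots,\lambda_n \in \mu_0 + \Sigma_\theta$, $x_1,\ldots,x_n \in \mx$ and Rademacher functions $r_1,\ldots,r_n$, the Bochner function $t \mapsto z(t) = \sum_j r_j(t)(\lambda_j - A)^{-1}x_j$ takes values in $\md(A)$. Applying the pointwise inequality $\|Bz(t)\| \leqslant \delta\|Az(t)\| + C(\delta)\|z(t)\|$ and integrating via Minkowski in $L^p([0,1];\mx)$ yields
$$\Bigl\|\textstyle\sum_j r_j B(\lambda_j - A)^{-1}x_j \Bigr\|_{L^p} \leqslant \delta \Bigl\|\textstyle\sum_j r_j A(\lambda_j - A)^{-1}x_j \Bigr\|_{L^p} + C(\delta) \Bigl\|\textstyle\sum_j r_j (\lambda_j - A)^{-1}x_j \Bigr\|_{L^p}.$$
Writing $A(\lambda - A)^{-1} = \lambda(\lambda - A)^{-1} - I$ and combining the hypothesis with \cref{rem:Rbd}(3), the first term is controlled by $\delta(C+1)\|\sum_j r_j x_j\|_{L^p}$. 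For the second, since $|\lambda_j^{-1}| \leqslant c_\theta/\mu_0$ uniformly on $\mu_0 + \Sigma_\theta$, the factorization $(\lambda_j - A)^{-1} = \lambda_j^{-1}\cdot\lambda_j(\lambda_j - A)^{-1}$ together with the Kahane contraction principle gives a bound of $c_\theta C\,C(\delta)/\mu_0$ times $\|\sum_j r_j x_j\|_{L^p}$. Choosing first $\delta$ so small that $\delta(C+1) \leqslant 1/4$, and then $\mu_0$ so large that $c_\theta C\,C(\delta)/\mu_0 \leqslant 1/4$, produces the required bound $1/2$.

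The only delicate step is the passage from the pointwise relative-bound-zero inequality to its randomized counterpart in $L^p([0,1];\mx)$. This works precisely because $B$ is a single unbounded operator applied pointwise to a $\md(A)$-valued Bochner function, so the scalar inequality lifts to an inequality of $L^p$-norms by Minkowski; all remaining ingredients (the sum and product rules for $\mr$-bounds in \cref{rem:Rbd}, the Kahane contraction principle, and norm-convergence of the Neumann series) are standard and combine mechanically to yield the conclusion.
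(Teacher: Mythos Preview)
Your proof is correct, and in fact you have supplied more than the paper does: the paper does not prove \cref{pr:perturb} at all but merely quotes it from Kunstmann and Weis \cite{WeisKun01}. The Neumann-series argument you give, with the pointwise lifting of the relative-bound inequality to Rademacher sums, the identity $A(\lambda-A)^{-1}=\lambda(\lambda-A)^{-1}-I$, and the Kahane contraction principle to absorb the scalar factors $\lambda_j^{-1}$, is exactly the standard route to this result and is essentially what one finds in that reference. One small point worth making explicit: you implicitly use that $\mu_0+\Sigma_\theta\subset\gamma_0+\Sigma_\theta$ once $\mu_0\geqslant\gamma_0$ (so that the assumed $\mr$-bound for $\lambda(\lambda-A)^{-1}$ is available on the smaller set), and that $|\lambda|\geqslant\mu_0\sin\theta$ on $\mu_0+\Sigma_\theta$ when $\theta>\pi/2$; both are elementary but needed for the argument to go through for the full range $\theta\in(0,\pi)$ stated in the proposition.
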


We conclude this section by stating an existence and uniqueness result for the abstract Cauchy problem \eqref{eq:ab-cauchy} on $\mathbb{R}^{+},$ which we will use to prove maximal $L^{p}$-$L^{q}$ regularity of the system  \eqref{fsi-l} (see \cite[Theorem 2.4]{Dor1993}).

\begin{theorem} \label{th:ab-i}
Let $\mx$ be a Banach space of class $\mathcal{HT},$ $1 < p < \infty$ and let $A$ be a closed, densely defined unbounded operator with domain $\md(A).$  Let  us assume that $A \in \mathcal{MR}_{p}([0,T];\mx)$
and  the semigroup generated by $A$ has negative exponential type. Then for every $z_{0} \in (\mx, \md(A))_{1-1/p,p}$ and for every $f \in L^{p}(0,\infty;\mx),$ \eqref{eq:ab-cauchy} admits a unique strong solution in $L^{p}(0,\infty;\md(A)) \cap W^{1,p}(0,\infty;\mx).$ Moreover, there exists a positive constant $C$ such that
\begin{align} \label{est:ab}
\|z\|_{L^{p}(0,\infty;\md(A))} + \|z\|_{W^{1,p}(0,\infty;\mx)} \leqslant C \left( \|z_{0}\|_{(\mx, \md(A))_{1-1/p,p}} + \|f\|_{L^{p}(0,\infty;\mx)}\right).
\end{align}
\end{theorem}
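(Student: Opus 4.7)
The plan is to decompose $z = z_h + z_i$, where $z_h(t) = e^{tA} z_0$ is the homogeneous solution and $z_i$ solves the Cauchy problem with forcing $f$ and zero initial datum. By the maximal regularity hypothesis and \cref{thm:Weis}, $-A$ is $\mr$-sectorial of angle strictly less than $\pi/2$, so $A$ generates a bounded analytic semigroup; together with the negative exponential type assumption this yields
\[
\|e^{tA}\|_{\mathcal{L}(\mx)} + t\,\|A e^{tA}\|_{\mathcal{L}(\mx)} \leqslant M e^{-\eta t}, \qquad t > 0,
\]
for some constants $\eta, M > 0$, and in particular $0 \in \rho(A)$.

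For the homogeneous part I would invoke the characterization of the real interpolation space by the analytic semigroup: when $A$ generates an exponentially stable analytic semigroup,
\[
(\mx, \md(A))_{1-1/p,p} = \Bigl\{ x \in \mx \ : \ \int_0^\infty \|A e^{tA} x\|_{\mx}^p\, dt < \infty \Bigr\},
\]
with equivalent norms. This immediately gives $A z_h \in L^p(0,\infty;\mx)$, hence $z_h \in L^p(0,\infty;\md(A))$; since $z_h' = A z_h$, the corresponding $W^{1,p}(0,\infty;\mx)$ bound follows, together with the estimate \eqref{est:ab}. For the inhomogeneous part, Young's convolution inequality applied to $z_i(t) = \int_0^t e^{(t-s)A} f(s)\, ds$ using $\|e^{\cdot A}\|_{\mathcal{L}(\mx)} \in L^1(0,\infty)$ gives $\|z_i\|_{L^p(0,\infty;\mx)} \leqslant C\|f\|_{L^p(0,\infty;\mx)}$. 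Partitioning $\mathbb{R}^+$ into unit intervals $I_k = [k,k+1]$ and applying finite-time maximal regularity on each $I_k$ with initial state $z_i(k)$, with a constant independent of $k$ by autonomy of the equation, yields
\[
\|A z_i\|_{L^p(I_k;\mx)}^p + \|z_i'\|_{L^p(I_k;\mx)}^p \leqslant C\bigl( \|z_i(k)\|_{(\mx,\md(A))_{1-1/p,p}}^p + \|f\|_{L^p(I_k;\mx)}^p \bigr).
\]
The $\ell^p$-summability in $k$ of the trace norms $\|z_i(k)\|_{(\mx,\md(A))_{1-1/p,p}}$ follows from the interpolation-space characterization above combined with the exponential decay of $s \mapsto A e^{sA}$ applied to the convolution representation of $z_i(k)$, closing the estimate.

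The main difficulty is precisely the passage from finite-time to infinite-time maximal regularity through this summability of the trace norms along the integer grid; this is the step where the exponential stability assumption is indispensable, as without it the summation over $k$ would diverge. A conceptually cleaner alternative that I would prefer for a polished write-up is to invoke the operator-valued Fourier multiplier theorem of Weis for the symbol $\xi \mapsto i\xi (i\xi - A)^{-1}$ on the imaginary axis: this symbol is $\mr$-bounded precisely because $-A$ is $\mr$-sectorial of angle less than $\pi/2$ together with $0 \in \rho(A)$, and it produces maximal regularity on $\mathbb{R}^+$ in one stroke, bypassing the explicit interval-by-interval bookkeeping.
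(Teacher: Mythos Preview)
The paper does not prove this theorem: it is quoted as a known result with a reference to \cite[Theorem~2.4]{Dor1993}, so there is no in-paper argument against which to compare your attempt.

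On its own merits your outline is sound. The decomposition $z=z_h+z_i$ and the treatment of $z_h$ via the semigroup description of $(\mx,\md(A))_{1-1/p,p}$ are standard and correct. For $z_i$, the interval-partitioning scheme works, but the step you flag as the main difficulty deserves a line or two more: writing $z_i(k)=\sum_{j=0}^{k-1}e^{jA}v_{k-j}$ with $v_m=\int_{m-1}^{m}e^{(m-s)A}f(s)\,ds$, the key estimate is $\|e^{jA}\|_{\mx\to(\mx,\md(A))_{1-1/p,p}}\leqslant Ce^{-\eta j}$ for $j\geqslant 1$, which follows from the analyticity bound $\|Ae^{A}\|_{\mathcal{L}(\mx)}<\infty$ combined with the exponential decay of the semigroup; a discrete Young inequality then gives $\sum_k\|z_i(k)\|_{(\mx,\md(A))_{1-1/p,p}}^p\leqslant C\|f\|_{L^p(0,\infty;\mx)}^p$ and closes the argument. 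Your Fourier-multiplier alternative is also correct and, since \cref{thm:Weis} is already available, arguably the cleaner route: $\mr$-sectoriality of $-A$ of angle $<\pi/2$ together with $0\in\rho(A)$ makes $\xi\mapsto i\xi(i\xi-A)^{-1}$ an admissible $\mr$-bounded Mikhlin symbol on $\mathbb{R}$, yielding maximal regularity on $L^p(\mathbb{R};\mx)$ and hence on $L^p(0,\infty;\mx)$ after extending $f$ by zero.
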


%%%%%%%%%%%%%%%%%%%%%%%%%%%%%%%%%%%%%%%%%%%%%%%%%%%%%%%%%
\section{$\mr$-sectoriality of the operator $\mathbb{A}.$} \label{sec:RA}
Let us recall the operator $\mathbb{A}$ introduced in \eqref{sg-TT}. The aim of this section is to prove \cref{th:main1}.  Due to \cref{thm:Weis}, it is enough to prove the following theorem:

\begin{theorem} \label{th:31}
Let $1 < q < \infty.$ There exists $\mu_{0} > 0$ and $\theta \in (\pi/2,\pi)$ such that $\mu_{0} + \Sigma_{\theta}\subset \rho(\mathbb{A})$ and
\begin{equation}
\mr_{\mathcal{L}(\mathbb{H}^{q}(\Omega))}  \left\{ \lambda (\lambda I - \mathbb{A})^{-1} \mid \lambda \in \mu_{0} + \Sigma_{\theta} \right\}  \leqslant C.
\end{equation}
\end{theorem}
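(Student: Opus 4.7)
The proof will follow a monolithic approach, treating the fluid and rigid-body unknowns simultaneously, in the spirit of \cite{MT17}. The plan has three steps: translate the abstract resolvent problem into a concrete coupled PDE-ODE system, apply classical $\mr$-sectoriality of the Stokes resolvent, and reduce the fluid-structure coupling to a finite-dimensional linear system for the rigid-body velocities.

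First, I would unfold the resolvent equation $(\lambda I - \mathbb{A})V = F$. Given $F \in \mathbb{H}^{q}(\Omega)$, set $f = F|_{\ofo}$ and, using $\varepsilon(F) = 0$ on $\oso$, write $F|_{\oso} = \ell_{F} + \omega_{F}\times y$ with $\ell_{F}, \omega_{F} \in \rt$. Recovering the pressure from $\nabla \pi = (\mathcal{A} - \mathbb{P}\mathcal{A})V$ in $\ofo$, the resolvent equation becomes the problem of finding $(u,\pi,\ell,\omega)$ satisfying
\begin{align*}
\lambda u - \nu \Delta u + \nabla \pi &= f \text{ in } \ofo, \quad \mathrm{div}\ u = 0 \text{ in } \ofo, \\
u &= 0 \text{ on } \partial\Omega, \quad u = \ell + \omega \times y \text{ on } \poso, \\
\lambda m \ell + \int_{\poso} \sigma(u,\pi)\, n \ \dg &= m \ell_{F}, \\
\lambda J(0)\omega + \int_{\poso} y \times \sigma(u,\pi)\, n \ \dg &= J(0)\omega_{F}.
\end{align*}

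Second, I would parametrize solutions of the Stokes subproblem by $(\ell,\omega)$. Invoking the known $\mr$-sectoriality of the Stokes resolvent in the bounded $C^{1,1}$-domain $\ofo$ with Dirichlet data, for $\lambda \in \mu_{1}+\Sigma_{\theta}$ one constructs linear maps $(f,\ell,\omega)\mapsto (u(\lambda),\nabla\pi(\lambda))$ that, multiplied by the appropriate powers of $\lambda$, are $\mr$-bounded from $L^{q}(\ofo)^{3}\times\rt\times\rt$ into $L^{q}(\ofo)^{3}$ and $L^{q}(\ofo)^{3\times 3}$ respectively. A convenient splitting uses a divergence-free rigid-motion lift of $\ell+\omega\times y$ off $\poso$ inside $\ofo$, plus the standard Stokes resolvent with homogeneous boundary data. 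Substituting the resulting stress trace $\sigma(u,\pi)n$ into the Newton-Euler equations yields the $6\times 6$ linear system
\[
\mathcal{M}(\lambda)\begin{pmatrix} \ell \\ \omega \end{pmatrix} = \begin{pmatrix} m\ell_{F} \\ J(0)\omega_{F} \end{pmatrix} - \mathcal{N}(\lambda) f,\qquad \mathcal{M}(\lambda) = \lambda \begin{pmatrix} mI_{3} & 0 \\ 0 & J(0) \end{pmatrix} + \mathcal{K}(\lambda),
\]
where $\mathcal{K}(\lambda)$ collects the hydrodynamic stress integrals arising from the rigid-motion lifts. Sharp trace estimates for the Stokes resolvent will give $\|\mathcal{K}(\lambda)\|=o(|\lambda|)$ uniformly in $\Sigma_{\theta}$, so for $\mu_{0}$ large enough $\mathcal{M}(\lambda)$ is invertible with $\mathcal{M}(\lambda)^{-1}=O(|\lambda|^{-1})$. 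Composing this inverse with the $\mr$-bounded Stokes solution operator and using \cref{rem:Rbd}(3)--(4) to combine $\mr$-bounded families gives the target bound on $\lambda(\lambda I - \mathbb{A})^{-1}$; lower-order remainders (from commutators with $\mathbb{P}$ and from $\mathcal{A}-\mathbb{P}\mathcal{A}$) can be absorbed via \cref{pr:perturb}.

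\textbf{The main obstacle} will be establishing operator-valued $\mr$-bounds, not merely norm bounds, for the coupling operator $\mathcal{K}(\lambda)$: one needs uniform $\mr$-control on $\lambda\mapsto \int_{\poso}\sigma(u_{\ell,\omega}(\lambda),\pi_{\ell,\omega}(\lambda))\, n\ \dg$ and its angular counterpart, which requires $\mr$-bounded trace estimates of the Stokes resolvent rather than pointwise-in-$\lambda$ ones, together with a careful inversion of $\mathcal{M}(\lambda)$ yielding an $\mr$-bounded (not just bounded) inverse through an $\mr$-valued Neumann series. A secondary technical point is verifying that the reconstructed $(u,\pi,\ell,\omega)$ produces a bona fide element $V\in\md(\mathbb{A})$ with rigid-motion structure in $\oso$, and that the pressure normalization in $L^{q}_{m}(\ofo)$ is consistent with the Helmholtz projection $\mathbb{P}$ onto $\mathbb{H}^{q}(\Omega)$.
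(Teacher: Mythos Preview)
Your overall strategy is sound and close in spirit to the paper's, but there is a genuine gap at the step where you claim $\|\mathcal{K}(\lambda)\|=o(|\lambda|)$. This is false, and the failure is not a technicality: it is precisely the \emph{added mass} phenomenon. When you lift the rigid motion $\ell+\omega\times y$ by a divergence-free field $W$ and solve the homogeneous-Dirichlet Stokes resolvent for the remainder, the right-hand side contains $-\lambda W$. Since $W\cdot n\neq 0$ on $\poso$, the Helmholtz projection of $-\lambda W$ produces a gradient part $-\lambda\nabla N_{S}\big((\ell+\omega\times y)\cdot n\big)$, so the pressure $\pi$ has a contribution that grows \emph{linearly} in $\lambda$. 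Integrating $\pi n$ and $y\times\pi n$ over $\poso$ then gives $\mathcal{K}(\lambda)=\lambda\,\mathbb{M}+\mathcal{K}_{0}(\lambda)$, where $\mathbb{M}$ is the (fixed, symmetric, positive semidefinite) added-mass matrix and only $\mathcal{K}_{0}(\lambda)$ is lower order. Your Neumann-series inversion of $\mathcal{M}(\lambda)$ as written would therefore fail.

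The repair is exactly what the paper exploits: one has $\mathcal{M}(\lambda)=\lambda\,\mathbb{K}+\mathcal{K}_{0}(\lambda)$ with $\mathbb{K}=\mathrm{diag}(mI_{3},J(0))+\mathbb{M}$, and the crucial algebraic fact (\cref{lem:inv-adm}) is that $\mathbb{K}$ is invertible. The paper organizes this more cleanly than your Schur-complement route: using the Leray projector on $\ofo$ it derives the explicit pressure formula $\pi=N(\nu\Delta\mpp u\cdot n)-\lambda N_{S}((\ell+\omega\times y)\cdot n)$, which isolates the $\lambda$-linear piece \emph{before} inserting it into the Newton--Euler equations. After multiplying by $\mathbb{K}^{-1}$, the coupled resolvent becomes $(\lambda I-\mathcal{A}_{FS})$ for a \emph{$\lambda$-independent} operator $\mathcal{A}_{FS}=\widetilde{\mathcal{A}}_{FS}+B_{FS}$ on $\mv^{q}_{n}(\ofo)\times\ct\times\ct$, where $\widetilde{\mathcal{A}}_{FS}$ is upper triangular (built from the Stokes operator $A_{0}$ and the lift $\mpp D$) and $B_{FS}$ is a \emph{finite-rank, $\lambda$-independent} operator. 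The $\mr$-sectoriality of $\widetilde{\mathcal{A}}_{FS}$ follows directly from that of $A_{0}$, and \cref{pr:perturb} handles $B_{FS}$ because finite rank implies relative bound zero. This completely sidesteps the two obstacles you flagged: no $\mr$-bounded trace estimates for a $\lambda$-dependent $\mathcal{K}(\lambda)$ are needed, and no $\mr$-valued Neumann series is required. If you prefer to keep your formulation, you must at minimum extract the added-mass matrix $\mathbb{M}$ explicitly and prove invertibility of $\mathbb{K}$; the remaining $\mr$-boundedness argument then reduces to the same perturbation step.
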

Let us remark that when $q =2$ the above theorem is already proved in \cite{TT1}. To prove the above theorem we will first obtain an equivalence  formulation of the resolvent equation.

\subsection{Reformulation of the resolvent equation}
Given $\lambda\in \mathbb{C}$,  $f \in L^{q}(\ofo)^{3}$ and $(g_{1},g_{2}) \in \ct \times \ct,$ we consider the system
\begin{alignat}{2} \label{eq:r1}
&\lambda u - \nu \Delta u + \nabla \pi  = f, \quad \mathrm{div} \ u = 0  &  \mbox{ in }   \ofo, \notag \\
& u = 0   & \mbox{ on }  \partial\Omega,\notag  \\
&  u  = \ell  + \omega \times y &   y \in   \poso, \notag \\
&  \lambda  m \ell =  -  \int_{\partial \Omega_{S}(0)}  \sigma(u,\pi) n \ d\gamma  +  g_{1}, \\
&  \lambda J(0) \omega =  - \int_{\partial \Omega_{S}(0)} y \times   \sigma(u,\pi)n \ d\gamma + g_{2}, \notag
\end{alignat}
of unknowns $(u,\pi,\ell,\omega)$.
 Following \cite{TT2, TT1} we have the following equivalence

\begin{proposition} \label{prop:ev1}
Let $1 < q < \infty.$ Let us assume that $f \in L^{q}(\ofo)$ and $(g_{1},g_{2}) \in \ct \times \ct.$ Then $(u,\pi,\ell,\omega) \in W^{2,q}(\ofo)^{3} \times W^{1,q}_{m}(\ofo) \times \ct \times \ct$ is a solution to \eqref{eq:r1} if and only if
\begin{align}
(\lambda I - \mathbb{A}) v = \mathbb{P} F
\end{align}
where
\begin{align*}
v = u \mathbbm{1}_{\ofo} + (\ell + \omega \times y) \mathbbm{1}_{\oso}, \quad F = \mathbb{P} \left( f\mathbbm{1}_{\ofo} + \left(m^{-1} g_{1} + J(0)^{-1} y \times g_{2} \right)\mathbbm{1}_{\oso} \right).
\end{align*}
\end{proposition}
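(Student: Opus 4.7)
The plan is to establish the equivalence by directly translating the resolvent system for $(u,\pi,\ell,\omega)$ into an identity for the extended field $v$, relying on the Helmholtz-type decomposition of $L^q(\Omega)^3$ that underlies the projector $\mathbb{P}$ from \cite[Theorem 2.2]{WX11}. The key fact I would use is the characterization of $\ker\mathbb{P}$ dual to the definition of $\mathbb{H}^q(\Omega)$: an element of $L^q(\Omega)^3$ belongs to $\ker\mathbb{P}$ if and only if it equals $\nabla p$ in $\ofo$ for some $p \in W^{1,q}(\ofo)$ and its restriction to $\oso$ is the rigid motion built from the boundary integrals $m^{-1}\int_{\poso} p\,n\,\dg$ and $(J(0)^{-1}\int_{\poso} y\times p\,n\,\dg)\times y$. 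With this description at hand, the proposition reduces to showing that the residual $\lambda v - \mathcal{A}v - \mathbb{P}F$ has exactly this structure with $p = \pi$.

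For the forward direction I would first check $v \in \mathcal{D}(\mathbb{A})$: the interface condition $u = \ell + \omega\times y$ on $\poso$ gives continuity across $\poso$ so $v \in W^{1,q}_0(\Omega)^3$; the restriction $v|_{\ofo} = u$ inherits the $W^{2,q}$ regularity from the Stokes system; $\mathrm{div}\,v = 0$ on all of $\Omega$ follows from $\mathrm{div}\,u = 0$ in $\ofo$, $\mathrm{div}(\ell + \omega\times y) = 0$ in $\oso$, and matching normal traces on $\poso$; and $\varepsilon(v) = 0$ in $\oso$ is automatic for a rigid motion. Next, the Stokes momentum equation immediately gives $\lambda v - \mathcal{A}v = f - \nabla\pi$ in $\ofo$. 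In $\oso$ I would substitute $\lambda\ell$ and $\lambda\omega$ from the two rigid-body ODEs and split $\sigma(u,\pi)n = -\pi n + 2\nu\varepsilon(u)n$; the viscous boundary integrals cancel against the $\varepsilon(v)$-integrals appearing in $\mathcal{A}v|_{\oso}$ and what remains is $F|_{\oso}$ together with the two pressure boundary integrals. The residual $\lambda v - \mathcal{A}v - F$ is then exactly of the form described above with $p = \pi$, hence lies in $\ker\mathbb{P}$, proving $(\lambda I - \mathbb{A})v = \mathbb{P}F$.

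For the converse direction, given $v \in \mathcal{D}(\mathbb{A})$ solving $(\lambda I - \mathbb{A})v = \mathbb{P}F$, the condition $\varepsilon(v) = 0$ in $\oso$ combined with continuity produces unique $\ell,\omega \in \mathbb{C}^3$ with $v|_{\oso} = \ell + \omega\times y$; I set $u := v|_{\ofo}$ and read off the boundary and interface conditions directly. Since the residual $\lambda v - \mathcal{A}v - F$ lies in $\ker\mathbb{P}$, the characterization of that kernel supplies a unique $\pi \in W^{1,q}_m(\ofo)$ such that the residual equals $-\nabla\pi$ in $\ofo$ and is given by the prescribed boundary-integral expressions of $\pi$ in $\oso$. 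Reading the resulting identity in $\ofo$ returns the Stokes momentum equation $\lambda u - \nu\Delta u + \nabla\pi = f$, while matching the constant-in-$y$ and linear-in-$y$ parts in $\oso$ against $F|_{\oso}$ returns the two rigid-body ODEs of \eqref{eq:r1}. The main obstacle, and the only step that requires an external ingredient, is the precise characterization of $\ker\mathbb{P}$ recalled above: it is simultaneously what makes the forward check $\mathbb{P}(\lambda v - \mathcal{A}v - F) = 0$ work and what enables the unique reconstruction of a single-valued, zero-mean pressure $\pi$ in the converse.
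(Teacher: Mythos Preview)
Your approach is correct and is precisely the standard verification that the paper defers to \cite{TT1,TT2} rather than spelling out; the key ingredient you identify --- the description of $\ker\mathbb{P}$ as gradient fields in $\ofo$ extended by the rigid motion determined by the pressure boundary integrals --- is exactly what underlies the equivalence in those references. One small notational point: in your argument you tacitly use $F$ for the \emph{unprojected} source $f\mathbbm{1}_{\ofo} + (m^{-1}g_1 + (J(0)^{-1}g_2)\times y)\mathbbm{1}_{\oso}$ when writing ``$F|_{\oso}$'', whereas the statement defines $F$ as its $\mathbb{P}$-projection; since $(\lambda I - \mathbb{A})v = \mathbb{P}F$ is equivalent to $\lambda v - \mathcal{A}v - \widetilde F \in \ker\mathbb{P}$ with $\widetilde F$ the unprojected source, this is harmless, but it is worth making the distinction explicit.
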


Next, we derive another equivalent formulation of the resolvent equation \eqref{eq:r1}. In this case, we do not extend the fluid velocity by the structure velocity everywhere in the domain $\Omega,$ rather we work on the fluid domain $\ofo.$    The idea is to eliminate the pressure from both the fluid and the structure equations. To eliminate the pressure from the fluid equation we use Leray projector $$\mpp : L^{q}(\ofo)^{3} \mapsto \mathcal{V}^q_{n} (\ofo) := \left \{\varphi \in { L}^q(\ofo)^{3} \mid \mathrm{div} \ \varphi = 0, \ \varphi \cdot { n} = 0 \mbox{ on } \partial\ofo\right\}.$$
Note that the projector $\mpp$ is different from the projector $\mathbb{P}$ used in \eqref{sg-TT}.
Following  \cite{Raymond}, first, we  decompose  the fluid velocity into two parts  $\mathcal{P}u$ and $(I-\mpp)u.$ Next, we obtain an expression of pressure, which can be broken down into two parts, one which depends on $\mpp u$ and another part which depends on $(\ell, \omega).$  This will allow us to eliminate the pressure term from the structure equations  and  rewrite the system \eqref{eq:r1} as an operator equation of $(\mpp u, \ell, \omega).$

 The advantage of this formulation is that we can prove the $\mr$-boundedness of the resolvent operator just by using the fact that Stokes operator with {\em homogeneous Dirichlet boundary conditions} is $\mr$-sectorial and a perturbation argument.  This idea has been used in several fluid-solid interaction problems in the Hilbert space setting and when the structure is deformable and located at the boundary (see, for instance, \cite{Raymond-fsi,  MR17} and references therein).

Let $1 < q < \infty$ and $q'$ denote the conjugate of $q,$ i.e., $\ds \frac{1}{q} + \frac{1}{q'}=1.$ Let $ n$ denote the  normal to $\partial \ofo$ exterior to $\ofo.$  For $1< q< \infty,$ we first introduce the  space
\begin{align*}
W_{q, div } (\Omega) = \left\{ \varphi \in L^{q}(\Omega)^{3} \mid \mathrm{div} \ \varphi \in L^{q}(\Omega) \right\},
\end{align*}
equipped with the norm
\begin{align*}
\|\varphi\|_{W_{q, div } (\Omega)} := \|\varphi\|_{L^{q}(\Omega)^{3}} + \|\mathrm{div} \ \varphi \|_{L^{q}(\Omega)}.
\end{align*}
It is easy to check that $W_{q, div } (\Omega)$ is a Banach space. We have the following classical lemma:
\begin{lemma} \label{Normaltrace}
Let $\Omega$ be a bounded domain with smooth boundary. The linear mapping
\begin{align*}
\varphi \mapsto \gamma_{n} \varphi := \varphi|_{\partial\Omega}  \cdot n
\end{align*}
defined on $C^{\infty}(\overline{\Omega})^{3}$ can be extended to a continuous and surjective map from $W_{q, div } (\Omega)$ onto $W^{-1/q,q}(\partial\Omega).$
\end{lemma}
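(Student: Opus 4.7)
The plan is to prove this classical trace lemma in three stages: first, continuity on the dense subspace of smooth fields via a duality pairing argument based on the divergence theorem; second, extension by density to $W_{q,\mathrm{div}}(\Omega)$; third, surjectivity by explicitly constructing a preimage through solving an auxiliary Neumann problem for the Laplacian. I expect the surjectivity step to be the main technical obstacle, since it requires the $L^q$-regularity theory for the Neumann Laplacian with boundary data living in the negative Sobolev space $W^{-1/q,q}(\partial\Omega)$.

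For continuity, I would pick $\varphi \in C^{\infty}(\overline\Omega)^{3}$ and test against an arbitrary $g \in W^{1/q,q'}(\partial\Omega)$, the predual of $W^{-1/q,q}(\partial\Omega)$. Using the surjectivity of the standard trace operator $W^{1,q'}(\Omega) \to W^{1-1/q', q'}(\partial\Omega) = W^{1/q,q'}(\partial\Omega)$, I would lift $g$ to $G \in W^{1,q'}(\Omega)$ with $\|G\|_{W^{1,q'}(\Omega)} \leqslant C \|g\|_{W^{1/q,q'}(\partial\Omega)}$. The divergence theorem applied to $G\varphi$ yields
\[
\int_{\partial\Omega} G\, (\varphi \cdot n)\, d\gamma = \int_{\Omega} G\, \mathrm{div}\,\varphi\, dx + \int_{\Omega} \varphi \cdot \nabla G\, dx,
\]
and Hölder's inequality then gives
\[
|\langle \gamma_{n} \varphi,\, g \rangle_{\partial\Omega}| \leqslant C \|\varphi\|_{W_{q,\mathrm{div}}(\Omega)} \|g\|_{W^{1/q,q'}(\partial\Omega)},
\]
which is the required estimate. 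A routine mollification-and-cutoff argument (using the smoothness of $\partial\Omega$) shows that $C^{\infty}(\overline\Omega)^{3}$ is dense in $W_{q,\mathrm{div}}(\Omega)$, so that $\gamma_{n}$ extends uniquely to a bounded linear operator from $W_{q,\mathrm{div}}(\Omega)$ into $W^{-1/q,q}(\partial\Omega)$.

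For surjectivity, given $g \in W^{-1/q,q}(\partial\Omega)$ I would set $c = \langle g, 1 \rangle_{\partial\Omega}/|\Omega|$ and consider the Neumann problem
\[
\Delta p = c \mbox{ in } \Omega, \qquad \frac{\partial p}{\partial n} = g \mbox{ on } \partial\Omega,
\]
whose compatibility condition $\int_{\Omega} c\, dx = \langle g, 1 \rangle_{\partial\Omega}$ holds by construction. Invoking the classical $L^{q}$ theory for the Neumann Laplacian on a smooth bounded domain (e.g.\ in the spirit of Simader--Sohr), there exists a solution $p$, unique up to a constant, with $\nabla p \in L^{q}(\Omega)^{3}$ and
\[
\|\nabla p\|_{L^{q}(\Omega)^{3}} \leqslant C \|g\|_{W^{-1/q,q}(\partial\Omega)}.
\]
Setting $\varphi := \nabla p$ one has $\varphi \in L^{q}(\Omega)^{3}$ and $\mathrm{div}\,\varphi = c \in L^{q}(\Omega)$, so $\varphi \in W_{q,\mathrm{div}}(\Omega)$, while the identity $\gamma_{n} \varphi = g$ is built into the Neumann boundary condition. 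This exhibits the desired preimage and completes the proof; the only place where serious analysis is invoked is the $L^{q}$ solvability of the Neumann problem with distributional boundary data, which is the step I would quote rather than redo.
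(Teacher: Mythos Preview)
Your argument is correct and is precisely the standard route to this classical trace lemma. The paper itself does not give a proof: it simply cites \cite[Lemma 1]{Fujiwara}, so your three-step outline (duality estimate via the divergence theorem, density of $C^{\infty}(\overline\Omega)^{3}$ in $W_{q,\mathrm{div}}(\Omega)$, surjectivity through the $L^{q}$ Neumann problem) in fact supplies more detail than the paper does, and coincides with what one finds in the cited reference.
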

\begin{proof}
For proof see \cite[Lemma 1]{Fujiwara}.
\end{proof}

Let us set
\begin{equation*}
 \mathcal{V}^q_{n} (\ofo)  = \overline{\left\{ \varphi \in C^{\infty}_{c}(\ofo) \mid \mathrm{div} \ \varphi = 0 \right\}}^{\|\cdot\|_{L^{q}}}.
\end{equation*}
As $\ofo$ is bounded, we actually have
\begin{equation*}
\mathcal{V}^q_{n} (\ofo)= \left \{\varphi \in { L}^q(\ofo)^{3} \mid \mathrm{div} \ \varphi = 0, \ \varphi \cdot { n} = 0 \mbox{ on }
 \partial\ofo\right\}.
\end{equation*}
We  have the following Helmholtz-Weyl decomposition of $L^{q}(\ofo)^{3}$ 
\begin{proposition} \label{hw}
The space ${ L}^q(\ofo)^{3}$ admits the following decomposition in a direct sum:
\begin{align*}
  {L}^q(\ofo)^{3} = \mv^q_{n}(\ofo) \oplus G^{q}(\ofo),
\end{align*}
where
\begin{equation*}
G^{q}(\ofo) =  \left\{ \nabla \varphi \mid \varphi \in W^{1,q}(\ofo)\right\}.
\end{equation*}
The  projection operator from ${ L}^q(\ofo)^{3}$ onto $\mv^q_{n}(\ofo)$ is denoted by $\mpp.$  The projector
$\mpp:{ L}^q(\ofo)^{3} \mapsto \mv^q_{n}(\ofo)$ is defined by
\begin{align*}
 \mpp u  = u - \nabla \varphi,
\end{align*}
where $\varphi \in W^{1,q}(\ofo)$ solves the following Neumann problem
\begin{align*}
 \Delta \varphi = \mathrm{div} \ u  \mbox{ in } \ofo, \quad \frac{\partial \varphi}{\partial n} = \varphi \cdot n \mbox{ on } \partial\ofo.
\end{align*}
\end{proposition}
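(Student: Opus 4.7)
The plan is to establish the decomposition by solving a weak Neumann problem, and then check that the resulting sum is direct. The decomposition will be constructed explicitly: given $u \in L^q(\Omega_F(0))^3$, we take the gradient part to be $\nabla\varphi$ where $\varphi$ solves the Neumann problem displayed in the statement, and set $v := u-\nabla\varphi$.

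\textbf{Step 1 (setting up the Neumann problem).} Given $u \in L^q(\Omega_F(0))^3$, view $\operatorname{div} u$ as a distribution in $W^{-1,q}(\Omega_F(0))$ and, using \cref{Normaltrace}, view $u\cdot n$ as an element of $W^{-1/q,q}(\partial\Omega_F(0))$. The compatibility condition
\begin{equation*}
\langle \operatorname{div} u, 1 \rangle = \langle u\cdot n, 1\rangle
\end{equation*}
follows from the distributional definition of the divergence together with the extended Stokes formula associated with \cref{Normaltrace}. I then formulate the Neumann problem in weak form: find $\varphi \in W^{1,q}(\Omega_F(0))$, unique up to additive constants, such that
\begin{equation*}
\int_{\Omega_F(0)} \nabla\varphi\cdot\nabla\psi \, dy = -\langle \operatorname{div} u,\psi\rangle + \langle u\cdot n,\psi|_{\partial\Omega_F(0)}\rangle
\end{equation*}
for all $\psi \in W^{1,q'}(\Omega_F(0))$, where $q'$ is the conjugate exponent.

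\textbf{Step 2 (solvability in $L^q$).} Solvability with the asserted $W^{1,q}$ regularity is the classical $L^q$ theory of the Neumann Laplacian on a bounded domain with smooth boundary; it is due to Simader--Sohr and can also be found in Galdi's monograph. This produces $\varphi \in W^{1,q}(\Omega_F(0))$ and I define $v := u - \nabla\varphi \in L^q(\Omega_F(0))^3$. The construction gives $\operatorname{div} v = 0$ in $\Omega_F(0)$ and $v\cdot n = 0$ on $\partial\Omega_F(0)$, so $v$ belongs to the characterization of $\mv_n^q(\Omega_F(0))$ given just before the proposition; that characterization itself uses \cref{Normaltrace} to make sense of the trace $v\cdot n$ and the density of $C^\infty_c$ divergence-free fields (the only non-trivial point: that the $L^q$-closure of $\{ \varphi\in C^\infty_c(\Omega_F(0)) : \operatorname{div}\varphi=0\}$ coincides with the distributional description, which is standard for smooth bounded domains).

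\textbf{Step 3 (directness and the projector).} To show the sum is direct, suppose $v = \nabla\varphi$ with $v \in \mv_n^q(\Omega_F(0))$ and $\varphi \in W^{1,q}(\Omega_F(0))$. Then $\varphi$ satisfies $\Delta\varphi = \operatorname{div} v = 0$ with $\partial_n\varphi = v\cdot n = 0$, and uniqueness modulo constants in Step~2 forces $\nabla\varphi=0$, hence $v=0$. Boundedness of the linear map $\mathcal P : L^q(\Omega_F(0))^3 \to \mv_n^q(\Omega_F(0))$ then follows from the a~priori estimate for the Neumann problem, which yields $\|\nabla\varphi\|_{L^q} \leqslant C\|u\|_{L^q}$.

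\textbf{Main obstacle.} The only genuinely non-routine point is invoking the $L^q$ solvability of the weak Neumann problem with data only in $W^{-1,q}$ and $W^{-1/q,q}$; the proof elsewhere in the paper is cited rather than re-derived, and I expect to simply quote Simader--Sohr (or \cite{Fujiwara}, in the spirit of \cref{Normaltrace}). Everything else is a formal consequence of this solvability statement.
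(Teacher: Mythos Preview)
Your argument is correct and is precisely the standard route to the Helmholtz--Weyl decomposition in $L^q$: solve the weak Neumann problem, show the remainder lies in $\mathcal{V}^q_n$, and deduce directness and boundedness from uniqueness and the a~priori estimate. The paper itself does not carry out any of this; its proof consists entirely of a reference to Section~3 and Theorem~2 of \cite{Fujiwara} (the Fujiwara--Morimoto paper), so you have in effect reproduced the content behind that citation rather than diverged from it. Your anticipated ``main obstacle'' is exactly right: the $L^q$ solvability of the weak Neumann problem is the substantive input, and citing \cite{Fujiwara} (or Simader--Sohr) is what both you and the paper ultimately do.
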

\begin{proof}
For the proof of the above result we refer to Section 3 and Theorem 2 of 
\cite{Fujiwara}.
\end{proof}
Let us denote by $A_{0} = \nu \mpp \Delta,$ the Stokes operator in $\mv^{q}_{n}(\ofo)$ with domain
\begin{align*}
\mathcal{D}(A_{0}) =  W^{2,q}(\ofo) \cap W^{1,q}_{0}(\ofo)^{3} \cap \mathcal{V}^q_{n} (\ofo).
\end{align*}
\begin{proposition} \label{prop:rsecS}
The Stokes operator $-A_{0}$ is ${\mathcal R}$-sectorial in $\mv^{q}_{n}(\ofo)$ of angle $0.$  In particular, there exists $\theta \in (\pi/2, \pi)$ such that
\begin{align}
\mr_{\mathcal{L}(\mv^{q}_{n}(\ofo))} \left\{ \lambda(\lambda I - A_{0})^{-1} \mid \lambda \in  \Sigma_{\theta} \right\} \leqslant C.
\end{align}
Moreover, $A_{0}$ generates a $C^{0}$-semigroup of negative type.
\end{proposition}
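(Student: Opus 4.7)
The plan is to deduce the proposition from well-established results on the classical Dirichlet Stokes operator on bounded smooth domains, since $A_{0}$ is exactly that operator on the fluid domain $\ofo$. The two assertions --- $\mr$-sectoriality of angle $0$ and negative exponential type of the generated semigroup --- are treated by independent arguments.

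For the first assertion I would invoke the by-now classical result that on any bounded smooth domain and for every $q \in (1,\infty)$, the Dirichlet Stokes operator $-A_{0}$ is $\mr$-sectorial in $\mv^{q}_{n}(\ofo)$ of angle $0$ (see e.g.\ Geissert--Heck--Hieber, Noll--Saal, or the presentation in \cite{KW04,DenkHieberPruss} and the references therein). This immediately yields the displayed $\mr$-bound on $\Sigma_{\theta}$ for any $\theta \in (\pi/2,\pi)$, and since $-A_{0}$ is in particular sectorial of angle strictly less than $\pi/2$, $A_{0}$ generates a bounded analytic $C^{0}$-semigroup on $\mv^{q}_{n}(\ofo)$.

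To establish the negative exponential type I would argue via the spectral bound. Since $\ofo$ is bounded and $\md(A_{0}) \hookrightarrow W^{2,q}(\ofo)^{3}$, the embedding $\md(A_{0}) \hookrightarrow \mv^{q}_{n}(\ofo)$ is compact by Rellich--Kondrachov, so $A_{0}$ has compact resolvent and $\sigma(A_{0})$ consists solely of isolated eigenvalues of finite algebraic multiplicity. The next step is $L^{q}$-independence of this spectrum: every eigenfunction $v$ satisfies a stationary Stokes system in $\ofo$ with zero Dirichlet data whose velocity-pressure pair is smooth up to the boundary by elliptic regularity for Stokes, hence belongs to $L^{2}(\ofo)^{3}$ as well. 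Thus $\sigma(A_{0})$ coincides with the spectrum of the $L^{2}$-Stokes operator, which is self-adjoint negative with spectral bound $-\lambda_{1} < 0$ by Poincar\'e's inequality. Since for analytic semigroups the growth bound equals the spectral bound, the semigroup generated by $A_{0}$ on $\mv^{q}_{n}(\ofo)$ is of negative exponential type.

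The only genuinely nontrivial ingredient is the $\mr$-sectoriality of angle $0$ of $-A_{0}$, whose proof combines operator-valued Fourier multiplier theorems with a localization-and-perturbation argument starting from explicit resolvent formulas in the half-space, and which I would simply cite rather than reproduce. Everything else reduces to standard facts about analytic semigroups with compact resolvent, together with interior and boundary regularity for the Stokes system, which deliver the $L^{q}$-independence of the spectrum on a bounded smooth domain.
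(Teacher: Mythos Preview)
Your proposal is correct and matches the paper's approach: the paper's entire proof is a one-line citation of Theorem~1.4 and Corollary~1.4 of \cite{Gei10}, so both you and the paper treat the $\mr$-sectoriality of $-A_{0}$ as a known result to be quoted rather than reproved. The only difference is that for the negative-type assertion you supply an explicit spectral argument (compact resolvent, $L^{q}$-independence of eigenvalues via Stokes regularity, reduction to the self-adjoint $L^{2}$ case, and spectral bound $=$ growth bound for analytic semigroups), whereas the paper defers this too to the cited reference; your argument is sound and has the modest advantage of being self-contained.
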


\begin{proof}
For proof we refer to Theorem 1.4 and Corollary 1.4 of \cite{Gei10}.
\end{proof}

Now we are going to rewrite the first three equations of \eqref{eq:r1} in terms of $\mpp u$ and $(I - \mpp)u.$ Let us consider the  following problem
\begin{align} \label{Ds}
\begin{cases}
- \nu \Delta w + \nabla \psi = 0, \quad \mathrm{div} \ w = 0, \; \quad y \in \ofo,  \\
 w = 0, \;\qquad\qquad \qquad\qquad y \in \partial\Omega, \\
   w = \ell + \omega \times y, \; \qquad \quad  y \in  \partial\oso, \\
 \ds \int_{\ofo} \psi \ {\rm d}y = 0.
\end{cases}
\end{align}
\begin{lemma} \label{lem:lift}
Let $(\ell, \omega) \in \ct \times \ct$ and let $\{e_{i}\}$ denote the canonical basis in $\ct$. Then the solution $(w,\pi)$ of \eqref{Ds} can be expressed as follows
 \begin{align}
 w = \sum_{i=1}^{3} \ell_{i} W_{i} + \sum_{i=4}^{6} \omega_{i-3} W_{i},  \quad \psi = \sum_{i=1}^{3} \ell_{i} \Psi_{i} + \sum_{i=4}^{6} \omega_{i-3} \Psi_{i}, \quad
\end{align}
where  $(W_{i},\Psi_{i})$, $i = 1, 2, \cdots, 6$ solves the following system
\begin{align} \label{eq:UP}
\begin{cases}
-\nu \Delta W_{i} +  \nabla \Psi_{i} = 0, \quad \mathrm{div} W_{i} = 0, \quad y\in  \ofo,  \\
 W_{i} = 0, \qquad  \qquad y \in  \partial\Omega, \\
 W_{i} = e_{i}, \mbox{ for } i=1,2,3 \mbox{ and }  W_{i} = e_{i-3} \times y, \mbox{ for } i=4,5,6, \quad y\in \partial\ofo, \\
\ds  \ds \int_{\ofo} \Psi \ {\rm d}y = 0.
\end{cases}
\end{align}
Moreover,
\begin{align*}
\begin{pmatrix}
\displaystyle
\int_{\partial\oso} \sigma(w,\psi)n \  d\gamma \\ \displaystyle \int_{\partial\oso} y \times \sigma (w, \psi)n \  d\gamma
\end{pmatrix} = \mathbb{B} \begin{pmatrix}
\ell \\ \omega
\end{pmatrix},
\end{align*}
where
\begin{align} \label{matM1}
\displaystyle \mathbb {B}_{i,j} = \int_{\ofo} DW_{i} : DW_{j}.
\end{align}
and the matrix $\mathbb{B}$ is invertible.
\end{lemma}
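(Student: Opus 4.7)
The plan is to (i) split the Stokes problem \eqref{Ds} by linearity into the six auxiliary problems \eqref{eq:UP}, (ii) identify the force/torque vector with a Gram-type matrix via an integration by parts, and (iii) prove invertibility of that matrix by a rigidity argument.

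For (i), each system \eqref{eq:UP} is a stationary Stokes problem in $\ofo$ with smooth inhomogeneous Dirichlet data, and I would first check the flux compatibility condition $\int_{\partial\ofo} W_i \cdot n\,\dg = 0$. For $i = 1,2,3$ the datum $e_i$ on $\poso$ integrates to $\int_{\oso}\mathrm{div}(e_i)\,dy = 0$ by the divergence theorem; for $i = 4,5,6$ the datum $e_{i-3}\times y$ satisfies $\mathrm{div}(e_{i-3}\times y) = 0$, so the flux is again zero. Classical results on the stationary Stokes problem in bounded $C^{1,1}$ domains with inhomogeneous Dirichlet data then produce a unique solution $(W_i,\Psi_i)\in W^{2,q}(\ofo)^3\times W^{1,q}_m(\ofo)$, and the decomposition formulas for $(w,\psi)$ follow at once from linearity of \eqref{Ds} in $(\ell,\omega)$ together with uniqueness.

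For (ii), I would test the equation $-\nu\Delta w + \nabla\psi = -\mathrm{div}\,\sigma(w,\psi) = 0$ against $W_j$ over $\ofo$, integrate by parts, and exploit $\mathrm{div}\,W_j = 0$ together with $W_j|_{\partial\Omega} = 0$ to reduce to
\begin{equation*}
\int_{\ofo} 2\nu\,\varepsilon(w):\varepsilon(W_j)\,dy \;=\; \int_{\poso} (\sigma(w,\psi)n)\cdot W_j\,\dg,
\end{equation*}
where $n$ is the outward normal to $\ofo$, which on $\poso$ coincides with the normal pointing into the rigid body (the convention of \eqref{eq:r1}). For $j = 1,2,3$, substituting $W_j|_{\poso} = e_j$ recovers the $j$-th component of $\int_{\poso}\sigma n\,\dg$; for $j = 4,5,6$, the scalar triple product identity $(\sigma n)\cdot(e_{j-3}\times y) = e_{j-3}\cdot(y\times\sigma n)$ recovers the $(j-3)$-th component of $\int_{\poso} y\times \sigma n\,\dg$. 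Substituting the decomposition of $w$ from (i) on the left yields the stated Gram-type expression for $\mathbb{B}$ (the viscosity constant being absorbed in the notation).

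For (iii), $\mathbb{B}$ is symmetric and positive semi-definite as a Gram matrix. If $c\in\mathbb{C}^6$ satisfies $c^{\top}\mathbb{B}c = 0$, set $W = \sum_{i=1}^{6} c_i W_i$; then the bulk integral vanishes, so $W$ is a rigid motion $\alpha + \beta\times y$ on the connected domain $\ofo$ (via Korn's identity if $D$ is read as $\varepsilon$, or directly if $D$ is the full gradient). The Dirichlet condition $W = 0$ on $\partial\Omega$, together with the fact that $\partial\Omega$ contains three non-collinear points, forces $\alpha = \beta = 0$, hence $W\equiv 0$; matching the prescribed trace on $\poso$ then yields $(c_1,c_2,c_3) + (c_4,c_5,c_6)\times y \equiv 0$ on $\poso$, and the same three-point argument applied to $\poso$ gives $c = 0$. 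The only subtle point in the argument is the bookkeeping of the normal's orientation and the scalar triple product in step (ii); everything else reduces to linear superposition, classical Stokes regularity, and a standard rigidity argument.
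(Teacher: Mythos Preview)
Your proposal is correct and follows essentially the same route the paper sketches: linearity for the decomposition, integration by parts for the identification of $\mathbb{B}$, and then invertibility. The paper's own proof is a three-line sketch that defers the invertibility of $\mathbb{B}$ entirely to \cite[Chapter 5]{HB65}, whereas you supply a self-contained Gram-matrix/rigidity argument (vanishing of $\varepsilon(W)$ or $\nabla W$ forces $W$ to be a rigid motion, and the homogeneous Dirichlet condition on $\partial\Omega$ kills it). That replacement is legitimate and arguably preferable in this context; your careful handling of the flux compatibility for \eqref{eq:UP}, of the normal orientation on $\poso$, and of the ambiguity between $DW$ and $\varepsilon(W)$ in \eqref{matM1} are all sound and more explicit than what the paper provides.
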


\begin{proof}
The expressions of $w$ and $\pi$ are easy to see. The expression of the matrix $\mathbb{B}$ follows easily by putting the expressions of $w$ and $\pi$ and by integration by parts. The proof of invertibility of the matrix $\mathbb{B}$ can be found in   \cite[Chapter 5]{HB65}.
\end{proof}

Let us introduce the following operators:
\begin{itemize}
\item The Dirichlet lifting operator $D \in {\mathcal L}(\mathbb{C}^{6},W^{2,q}(\ofo))$ and
$D_{pr} \in {\mathcal L}(\mathbb{C}^{6},W^{1,q}_{m}(\ofo))$ defined by
\begin{align}
 D(\ell,\omega) = w, \quad D_{pr}(\ell,\omega) = \psi,
\end{align}
where $(w,\psi)$  is the solution to the problem \eqref{Ds}.
\item The Neumann operator $N \in {\mathcal L}(W^{1-1/q,q}_{m}(\partial\ofo), W^{2,q}_{m}(\ofo))$ defined by $N h = \varphi,$ where $\varphi$ is the solution to the Neumann problem
\begin{align} \label{N}
\Delta\varphi= 0 \mbox{ in } \ofo, \quad \frac{\partial \varphi}{\partial n} = h \mbox{ on } \partial\ofo.
\end{align}
We set
\begin{align} \label{Ns}
N_{S} h = N (\mathbbm{1}_{\partial \oso}  h) \mbox{ for }  h \in W^{1-1/q,q}_{m}(\partial\oso).
\end{align}
\end{itemize}

We now rewrite the equations satisfied by $u$ in system \eqref{eq:r1} as a new system of two equations, one satisfied by $\mpp u$ and another by $(I-\mpp) u.$ More precisely, we have the following proposition
\begin{proposition} \label{equiv1}
Let $1 <  q < \infty.$ Let us assume that  $(f, \ell,\omega) \in \mv^q_{n}(\ofo) \times \ct \times \ct.$ A pair $(u,\pi) \in W^{2,q}(\ofo) \times W^{1,q}_{m}(\ofo)$  satisfies the system
\begin{align} \label{stokes1}
\begin{cases}
\lambda u - \nu \Delta u + \nabla \pi = f, \quad \mathrm{ div} \ u = 0, \quad y \in \ofo,  \\
 u  = 0 \quad  y \in \partial \Omega,  \\
 u = \ell + \omega \times y \quad  y \in  \partial\oso,
\end{cases}
\end{align}
if and only if
\begin{align} \label{stokes2}
\begin{cases}
\lambda \mpp u  - A_{0} \mpp u  + A_{0} \mpp D(\ell,\omega) = \mpp f,  \\
(I - \mpp) u = (I - \mpp) D (\ell,\omega )  \\
\pi = N (\nu \Delta \mpp u \cdot n) - \lambda N_{S} ((\ell + \omega \times y) \cdot n) .
\end{cases}
\end{align}
\end{proposition}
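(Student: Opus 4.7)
The plan is to derive \eqref{stokes2} from \eqref{stokes1} (and vice versa) by separately applying the Helmholtz projection $\mpp$ and its complement $I - \mpp$ to the momentum equation, using the lift $w = D(\ell,\omega)$ from \eqref{Ds} to absorb the inhomogeneous boundary data, and characterising the pressure as a harmonic function with a prescribed Neumann trace.

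\textbf{Forward direction.} Assume $(u,\pi)$ solves \eqref{stokes1} and let $w = D(\ell,\omega)$. Since $u$ and $w$ carry the same Dirichlet data on $\partial\ofo$ and both are divergence-free, $u - w \in W^{1,q}_{0}(\ofo)^3 \cap W^{2,q}(\ofo)^{3} \cap \mv^{q}_{n}(\ofo) = \mathcal{D}(A_{0})$. Consequently $\mpp(u-w) = u-w$, which immediately yields
\[
(I-\mpp)u \;=\; (I-\mpp)w \;=\; (I-\mpp)D(\ell,\omega),
\]
the second equation of \eqref{stokes2}. Applying $\mpp$ to the first equation of \eqref{stokes1} kills $\nabla\pi$ and produces $\lambda \mpp u - \nu \mpp \Delta u = \mpp f$. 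From \eqref{Ds} one has $\nu \Delta w = \nabla \psi \in G^{q}(\ofo)$, so $\mpp \Delta w = 0$, and therefore
\[
\nu \mpp \Delta u \;=\; \nu \mpp \Delta(u-w) \;=\; A_{0}\bigl(\mpp u - \mpp D(\ell,\omega)\bigr),
\]
which is the first equation of \eqref{stokes2}. For the pressure, apply $I-\mpp$ to the same momentum equation to obtain $\nabla \pi = \nu (I-\mpp)\Delta u - \lambda (I-\mpp)u$. Writing $(I-\mpp)u = \nabla \phi$ with $\phi$ the Neumann potential of $\mathrm{div}\,u = 0$, one has $\Delta \phi = 0$ and hence $\Delta(I-\mpp)u = 0$, so $\Delta u \cdot n = \Delta \mpp u \cdot n$ on $\partial\ofo$. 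Combined with the boundary conditions of $u$, this gives $\Delta \pi = 0$ in $\ofo$ and
\[
\partial_{n}\pi \;=\; \nu \Delta \mpp u \cdot n - \lambda (\ell + \omega \times y)\cdot n \; \mathbbm{1}_{\partial\oso},
\]
so the mean-zero normalisation $\pi \in W^{1,q}_{m}(\ofo)$ selects $\pi = N(\nu \Delta \mpp u \cdot n) - \lambda N_{S}((\ell+\omega\times y)\cdot n)$.

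\textbf{Converse direction.} Given $(\mpp u, \ell, \omega, \pi)$ satisfying \eqref{stokes2}, set $u := \mpp u + (I-\mpp)D(\ell,\omega)$. The first equation of \eqref{stokes2} forces $\mpp u - \mpp D(\ell,\omega) \in \mathcal{D}(A_{0})$, so this difference vanishes on $\partial\ofo$; hence $u = D(\ell,\omega)$ on $\partial\ofo$, recovering the Dirichlet data, while $\mathrm{div}\,u = 0$ is immediate from the decomposition. To recover the momentum equation, the $\mpp$-projection reproduces the first equation of \eqref{stokes2} by the same identity as above, while the pressure formula combined with $\Delta(I-\mpp)u = 0$ guarantees that $\nabla\pi$ is the (unique) gradient field equal to $\nu(I-\mpp)\Delta u - \lambda (I-\mpp)u$, which is precisely the $(I-\mpp)$-projection of \eqref{stokes1}.

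\textbf{Main obstacle.} The principal subtlety is that neither $\mpp u$ nor $\mpp D(\ell,\omega)$ belongs to $\mathcal{D}(A_{0})$ individually (their tangential traces on $\partial\oso$ need not vanish); only the combination $\mpp u - \mpp D(\ell,\omega)$ does. Accordingly the formal expression ``$-A_{0}\mpp u + A_{0}\mpp D(\ell,\omega)$'' in \eqref{stokes2} must be read as $-A_{0}(\mpp u - \mpp D(\ell,\omega))$. A parallel care is needed in identifying the Neumann trace of $\pi$, where the harmonicity of $(I-\mpp)u$ (which in turn rests on $\mathrm{div}\,u = 0$ and the Neumann characterisation of $I-\mpp$) is what allows $\Delta u\cdot n$ to be replaced by $\Delta \mpp u\cdot n$.
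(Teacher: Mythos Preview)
Your proof is correct and follows essentially the same route as the paper: both introduce the lift $w=D(\ell,\omega)$, observe that $u-w\in\mathcal{D}(A_0)$ so that $\mpp(u-w)=u-w$, and derive the three equations of \eqref{stokes2} by projecting the momentum equation and reading off the Neumann problem for $\pi$ after noting $\Delta(I-\mpp)u=0$. The only stylistic difference is in the converse: the paper constructs \emph{some} pressure $\widetilde\pi$ from the abstract equation $\mpp(\lambda\widetilde u-A_0\widetilde u)=\mpp(f-\lambda D(\ell,\omega))$ and then sets $\pi=\widetilde\pi+D_{pr}(\ell,\omega)$ (relying implicitly on uniqueness to match the third line of \eqref{stokes2}), whereas you verify directly that the given formula for $\pi$ has the correct Neumann trace; your remark that $A_0\mpp u$ and $A_0\mpp D(\ell,\omega)$ must be read jointly as $A_0(\mpp u-\mpp D(\ell,\omega))$ is a point the paper leaves implicit.
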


\begin{proof}
Let $(u,\pi) \in W^{2,q}(\ofo)^{3} \times W^{1,q}_{m}(\ofo)$ satisfies the system \eqref{stokes1}. We set
\begin{equation*}
\widetilde u  = u - D(\ell, \omega), \quad \widetilde \pi = \pi - D_{pr}(\ell, \omega).
\end{equation*}
The pair $(\widetilde u, \widetilde p)$ satisfies the following system
\begin{align*}
&\lambda \widetilde u + \lambda D(\ell, \omega) - \nu \Delta \widetilde u + \nabla p = f, \quad \mathrm{ div} \ \widetilde u = 0, \mbox{ in } \ofo, \notag \\
&\widetilde u  = 0 \quad   \mbox{ on } \partial\ofo.
\end{align*}
Note that $\widetilde u \in \mathcal{D}(A_{0})$ and $\mpp \widetilde u = \widetilde u.$  Thus applying the projection $\mpp$ on the above system it is easy to see that $\mpp  u$ satisfies the following
\begin{align*}
\lambda \mpp u   - A_{0} \mpp u  + A_{0} \mpp D(\ell,\omega) = \mpp f.
\end{align*}
Since $(I - \mpp) \widetilde u = 0,$ we obtain
\begin{align*}
(I - \mpp) u = (I -\mpp)(\widetilde u + D(\ell, \omega)) = (I - \mpp) D (\ell,\omega ).
\end{align*}
Note that, from the expression of $\mpp$ in \cref{hw}, it follows that $\Delta(I - \mpp) u = 0$ in $\ofo.$ Therefore  the first equation of \eqref{stokes1} can be written as 
\begin{equation*}
\lambda u - \Delta \mpp u + \nabla \pi = f  \quad \mbox{ in } \ofo.
\end{equation*}
By applying the divergence and normal trace operators to the above equation, we obtain that $\pi$ is the solution of the problem
\begin{align} \label{pi1}
\begin{cases}
\Delta \pi =0  \qquad \mbox{ in } \ofo,   \\
\ds \frac{\partial \pi}{\partial n} =  \nu \Delta \mpp u \cdot n - \lambda u \cdot n \qquad \mbox{ on } \partial \ofo.
\end{cases}
\end{align}
Since $\mathrm{div} \Delta \mpp u = 0$ it follows that $\nu \Delta \mpp u \cdot n$  belongs to $W^{-1/q,q}(\partial\Omega)$  and satisfies the following condition
\begin{align*}
\left\langle \nu \Delta \mpp u \cdot n , 1 \right\rangle_{W^{-1/q,q}, W^{1-1/q',q'}}  = 0
\end{align*}
Since $\ofo$ is a smooth domain, $\eqref{pi1}$ admits a unique solution in $W^{1,q}_{m}(\ofo)$ (\cite[Theorem 9.2]{Fab98}) and the expression of $\pi$ in \eqref{stokes2} follows from the definition of the operators $N$ and $N_{S}.$

Conversely,  let  $(u,\pi) \in W^{2,q}(\ofo)^{3} \times W^{1,q}_{m}(\ofo)$ satisfies the system \eqref{stokes2}. Since $(I - \mpp)u = (I - \mpp) D(\ell, \omega)$ we get $\widetilde u : = u - D(\ell, \omega) \in \mathcal{D}(A_{0}).$ Thus $\eqref{stokes2}_{1}$ can be written as
\begin{align*}
\mpp \left( \lambda \widetilde u - A_{0} \widetilde u \right) = \mpp ( f - \lambda D(\ell,\omega)).
\end{align*}
Therefore, there exists $\widetilde \pi \in W^{1,q}_{m}(\ofo)$ such that $(\widetilde u, \widetilde \pi)$ satisfies
\begin{align*}
\lambda \widetilde u - \nu \Delta \widetilde u + \nabla \widetilde \pi = f - \lambda D(\ell, \omega), \quad \mathrm{div} \ \widetilde u = 0 \mbox{ in } \ofo,  \quad \widetilde u = 0 \mbox{ on } \partial \ofo.
\end{align*}
Then $(u, \pi),$ with $\pi = \widetilde \pi +D_{pr} (\ell, \omega),$ satisfies the system \eqref{stokes1}.
\end{proof}

Using the expression of the pressure $\pi$ obtained in \eqref{equiv1}, we rewrite the equations satisfied by $\ell$ and $\omega$ in \eqref{eq:r1} in the form
\begin{align*}
\lambda m \ell &= - 2\nu \int_{\poso} \varepsilon(u)n \ d\gamma + \int_{\partial \Omega_{S}(0)}  \pi n \ d\gamma  + g_{1} \\
& = -2\nu \left[\int_{\poso} \varepsilon(\mpp u) n \ d\gamma + \int_{\poso} \varepsilon((I - \mpp) D (\ell,\omega))n \ d\gamma\right] \\
& + \int_{\partial\oso} N (\Delta \mpp u \cdot n)  n \ d\gamma  - \lambda \int_{\poso} N_{S} ((\ell + \omega \times y) \cdot n) n \ \dg  + g_{1},
\end{align*}
and
\begin{align*}
J(0) \lambda \omega  & = -2\nu \left[\int_{\poso} y \times \varepsilon(\mpp u)n \ d\gamma + \int_{\poso} y \times \varepsilon((I - \mpp) D (\ell,\omega))n \ d\gamma\right] \\
& \quad \quad + \int_{\partial\oso} y \times  N (\Delta \mpp u \cdot n)  n \ d\gamma  -  \lambda \int_{\poso} y \times N_{S} ((\ell + \omega \times y) \cdot n) n \ \dg   + g_{2}.
\end{align*}
The above two equations can be written as
\begin{align}
\lambda \mathbb{K} \begin{pmatrix}
\ell \\ \omega
\end{pmatrix} =
\mathcal{C}_{1} \mpp u + \mathcal{C}_{2} \begin{pmatrix}
\ell \\ \omega
\end{pmatrix} +
\begin{pmatrix}
g_{1} \\ g_{2}
\end{pmatrix},
\end{align}
where
\begin{align} \label{eq:adm}
\mathbb{K}  = \begin{pmatrix}
m I_{3} & 0 \\ 0 & J(0)
\end{pmatrix} + \mathbb{M}, \quad
\mathbb{M} \begin{pmatrix}
\ell \\ \omega
\end{pmatrix} =  \begin{pmatrix}
\ds \int_{\poso} N_{S} ((\ell + \omega \times y) \cdot n) n \ \dg \\ \ds \int_{\poso} y \times N_{S} ((\ell + \omega \times y) \cdot n) n \ \dg
\end{pmatrix},
\end{align}
\begin{align}
\mathcal{C}_{1} \mpp u = \begin{pmatrix}
\ds -2\nu \int_{\poso} \varepsilon(\mpp u)n \ d\gamma + \int_{\partial\oso} N (\Delta \mpp u \cdot n) n \ d\gamma \\
\ds  -2\nu \int_{\poso} y \times \varepsilon(\mpp u) \ n \ d\gamma + \int_{\partial\oso}  y \times  N (\Delta \mpp u \cdot n)  n \ d\gamma
\end{pmatrix},
\end{align}
and
\begin{align}
\mathcal{C}_{2} \begin{pmatrix}
\ell \\ \omega
\end{pmatrix} =  \begin{pmatrix}
- \ds 2\nu \int_{\poso} \varepsilon((I - \mpp) D(\ell,\omega))n \ d\gamma \\
- \ds 2\nu \int_{\poso} y\times \varepsilon((I - \mpp) D(\ell,\omega))n \ d\gamma
\end{pmatrix}.
\end{align}

In the literature, the matrix $\mathbb{M}$ defined above  is known as the added mass operator. We are now going to show that the matrix $\mathbb{K}$  is an invertible matrix.

\begin{lemma} \label{lem:inv-adm}
The matrix $\mathbb{K}$ defined as in \eqref{eq:adm} is an invertible matrix.
\end{lemma}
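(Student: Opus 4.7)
The plan is to show that $\mathbb{K}$ is a real symmetric positive definite $6\times 6$ matrix and therefore invertible over $\mathbb{C}^{6}$. The block $\mathrm{diag}(mI_{3},J(0))$ is already symmetric and positive definite thanks to $m>0$ and the coercivity estimate $J(0)a\cdot a\geqslant C_{J}|a|^{2}$ recorded after \eqref{tensor}. Hence it suffices to show that the added-mass matrix $\mathbb{M}$ is symmetric and positive semi-definite.

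First I would compute the associated bilinear form. Given $(\ell^{(i)},\omega^{(i)})\in\mathbb{R}^{3}\times\mathbb{R}^{3}$, $i=1,2$, set
\[
h^{(i)}=(\ell^{(i)}+\omega^{(i)}\times y)\cdot n\quad\text{on }\poso,\qquad \varphi^{(i)}=N_{S}h^{(i)}.
\]
Because $\mathrm{div}(\ell^{(i)}+\omega^{(i)}\times y)=0$, the compatibility condition $\int_{\partial\ofo}h^{(i)}\,d\gamma=0$ (with $h^{(i)}$ extended by $0$ on $\partial\Omega$) holds, so $\varphi^{(i)}\in W^{1,2}_{m}(\ofo)$ is well defined and harmonic in $\ofo$ with $\partial_{\nu_{F}}\varphi^{(i)}=h^{(i)}$ on $\partial\ofo$, where $\nu_{F}$ is the outward normal to $\ofo$ (which coincides with $n$ on $\poso$ and vanishes against the flux on $\partial\Omega$). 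A direct computation, using the identity $\omega\cdot(y\times n)=(\omega\times y)\cdot n$, gives
\[
\begin{pmatrix}\ell^{(1)}\\ \omega^{(1)}\end{pmatrix}\cdot\mathbb{M}\begin{pmatrix}\ell^{(2)}\\ \omega^{(2)}\end{pmatrix}=\int_{\poso}\varphi^{(2)}\,h^{(1)}\,d\gamma.
\]

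Next I would integrate by parts: since $\Delta\varphi^{(2)}=0$ in $\ofo$ and $\partial_{\nu_{F}}\varphi^{(2)}=0$ on $\partial\Omega$,
\[
\int_{\poso}\varphi^{(2)}\,h^{(1)}\,d\gamma=\int_{\partial\ofo}\varphi^{(2)}\,\partial_{\nu_{F}}\varphi^{(1)}\,d\gamma=\int_{\ofo}\nabla\varphi^{(2)}\cdot\nabla\varphi^{(1)}\,dy.
\]
The right-hand side is symmetric in the indices $1,2$, so $\mathbb{M}$ is symmetric. Taking $(\ell^{(1)},\omega^{(1)})=(\ell^{(2)},\omega^{(2)})=(\ell,\omega)$ yields
\[
\begin{pmatrix}\ell\\ \omega\end{pmatrix}\cdot\mathbb{M}\begin{pmatrix}\ell\\ \omega\end{pmatrix}=\int_{\ofo}|\nabla\varphi|^{2}\,dy\geqslant 0,
\]
so $\mathbb{M}$ is positive semi-definite.

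Combining this with the strict positivity of $\mathrm{diag}(mI_{3},J(0))$ gives, for every nonzero $(\ell,\omega)\in\mathbb{R}^{6}$,
\[
\begin{pmatrix}\ell\\ \omega\end{pmatrix}\cdot\mathbb{K}\begin{pmatrix}\ell\\ \omega\end{pmatrix}\geqslant m|\ell|^{2}+C_{J}|\omega|^{2}>0,
\]
so $\mathbb{K}$ is a real symmetric positive definite matrix. In particular it is invertible (its invertibility as a complex $6\times 6$ matrix follows from its invertibility as a real one). The only mildly delicate point is justifying the integration by parts, which is standard once one checks the compatibility of $h^{(i)}$ and the regularity of $\varphi^{(i)}$ provided by \cref{Normaltrace} and the smoothness of $\poso$; everything else is routine.
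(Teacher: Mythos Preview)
Your proof is correct and follows essentially the same approach as the paper: showing that the added-mass matrix $\mathbb{M}$ is symmetric and positive semi-definite (via the harmonic Neumann potentials $\varphi^{(i)}=N_{S}h^{(i)}$ and Green's/Gauss' theorem), and then using the strict positive definiteness of the inertia block $\mathrm{diag}(mI_{3},J(0))$ to conclude that $\mathbb{K}$ is positive definite, hence invertible. The only cosmetic difference is that the paper works entrywise with basis potentials $\pi^{i}$ while you work directly with the bilinear form, but the underlying argument is the same.
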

\begin{proof}
The proof may be adapted from that of \cite[Lemma 4.6]{Gal02} (see also \cite[Lemma 4.3]{Gei13}).  Let us briefly explain the idea of the proof.  We are going to show that the matrix $\mathbb{M}$ is symmetric and semipositive definite. For that, we first derive an representation formula of the matrix $\mathbb{M}.$ Let us consider the following problem
\begin{align*}
&\Delta  \pi^{i} = 0 \mbox{ in } \ofo, \quad \frac{\partial  \pi^{i}}{\partial n} = 0 \mbox{ on } \partial\Omega, \quad \\
&\frac{\partial  \pi^{i}}{\partial n} = e_{i} \cdot n \mbox{ for }  i =1,2,3 \mbox{ and } \frac{\partial  \pi^{i}}{\partial n} = (e_{i-3} \times y) \cdot n \mbox{ for } i = 4,5,6, \quad y \in \partial \oso,
\end{align*}
where $\{e_{i}\}$ denote the canonical basis in $\mathbb{C}^{3}.$ Therefore, it is easy to see that
\begin{align*}
N_{S}((\ell + \omega \times y) \cdot n) = \sum_{i=1}^{3} \ell_{i} \pi^{i} + \sum_{i=4}^{6} \omega_{i-3} \pi^{i}.
\end{align*}
We define
\begin{align*}
m_{ij} =
\begin{cases}
\ds \int_{\partial\oso}  \pi^{i} n^{j} & \mbox{ for } 1 \leqslant i \leqslant 6, 1\leqslant  j \leqslant 3, \\
\ds \int_{\poso} \pi^{i}(e_{j-3} \times y) \cdot n & \mbox{ for } 1 \leqslant i \leqslant 6, 4\leqslant  j \leqslant 6.
\end{cases}
\end{align*}
One can easily check that, $\mathbb{M} = (m_{ij})_{1\leqslant i,j\leqslant 6}.$ With this representation and Gauss' theorem, we can  verify that $\mathbb{M}$ is symmetric and semipositive definite.
\end{proof}

Let us set
\begin{align} \label{eq:sp}
\mx  = \mathcal{V}^q_{n} (\ofo) \times \ct \times \ct.
\end{align}
and consider the operator $\mathcal{A}_{FS} : \mathcal{D}(\mathcal{A}_{FS}) \mapsto \mx$
defined by
\begin{align*}
\mathcal{D}(\mathcal{A}_{FS}) = \left\{ (\mpp u, \ell, \omega) \in \mx \mid \mpp u - \mpp D(\ell, \omega) \in \md(A_{0}) \right\},
\end{align*}
and
\begin{align} \label{op:afs}
\mathcal{A}_{FS} = \begin{pmatrix}
A_{0} & -A_{0}\mpp D \\  \mathbb{K}^{-1} \mathcal{C}_{1} & \mathbb{K}^{-1} \mathcal{C}_{2}
\end{pmatrix}.
\end{align}
Combining the above results, below we obtain an equivalence formulation of the system \eqref{eq:r1}.
\begin{proposition} \label{prop:ev2}
Let $1 <  q < \infty.$ Let us assume that  $(f, g_{1},g_{2}) \in \mathcal{V}^q_{n} (\ofo) \times \ct \times \ct.$ Then  $(u,p, \ell, \omega) \in W^{2,q}(\ofo) \times W^{1,q}_{m}(\ofo) \times \ct \times \ct$  satisfy the system \eqref{eq:r1} if and only if
\begin{align}
&(\lambda I - \mathcal{A}_{FS}) \begin{pmatrix}
\mpp u \\ \ell \\ \omega
\end{pmatrix} = \begin{pmatrix}
\mpp f \\ \widetilde g_{1} \\ \widetilde g_{2}
\end{pmatrix}, \\
&(I - \mpp) u = (I - \mpp) D (\ell,\omega ), \notag \\
&\pi = N (\nu \Delta \mpp u \cdot n) - \lambda N_{S} ((\ell + \omega \times y) \cdot n), \notag
\end{align}
where $(\widetilde g_{1}, \widetilde g_{2})^{\top}  = \mathbb{K}^{-1}(g_{1},g_{2})^{\top}.$
\end{proposition}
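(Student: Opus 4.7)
The statement to be proved is essentially a bookkeeping equivalence: it packages together the Stokes-reformulation of Proposition~\ref{equiv1} and the rewriting of the rigid-body ODEs that was just carried out in the paragraphs preceding the statement. I would therefore handle the two implications separately, in each direction splitting the problem into the fluid part (governed by Proposition~\ref{equiv1}) and the ODE part (governed by the expressions of $\mathbb{K}$, $\mathcal{C}_1$, $\mathcal{C}_2$ and Lemma~\ref{lem:inv-adm}).

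For the forward direction, I would start from a solution $(u,\pi,\ell,\omega) \in W^{2,q}(\Omega_F(0))^3 \times W^{1,q}_m(\Omega_F(0)) \times \mathbb{C}^3 \times \mathbb{C}^3$ of \eqref{eq:r1}. The first three equations of \eqref{eq:r1} are exactly the hypotheses of Proposition~\ref{equiv1}, so the decomposition $u = \mathcal{P}u + (I-\mathcal{P})u$ gives the second and third equations of the conclusion and rewrites the Stokes equation as $\lambda \mathcal{P}u - A_0 \mathcal{P}u + A_0 \mathcal{P} D(\ell,\omega) = \mathcal{P}f$, which is precisely the first row of $(\lambda I - \mathcal{A}_{FS})(\mathcal{P}u,\ell,\omega)^\top$. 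Next, I would substitute the formula $\pi = N(\nu \Delta \mathcal{P}u \cdot n) - \lambda N_S((\ell + \omega \times y)\cdot n)$ into the two rigid-body equations of \eqref{eq:r1}. Using Lemma~\ref{lem:lift} and the decomposition $u = \mathcal{P}u + (I-\mathcal{P})u = \mathcal{P}u + (I-\mathcal{P})D(\ell,\omega)$ for the viscous surface integrals, the computation carried out between \eqref{eq:r1} and \eqref{eq:adm} in the paper yields
\[
\lambda \mathbb{K}\begin{pmatrix}\ell\\\omega\end{pmatrix} = \mathcal{C}_1 \mathcal{P}u + \mathcal{C}_2 \begin{pmatrix}\ell\\\omega\end{pmatrix} + \begin{pmatrix}g_1\\g_2\end{pmatrix}.
\]
Multiplying by $\mathbb{K}^{-1}$, available thanks to Lemma~\ref{lem:inv-adm}, produces the remaining two rows of the operator equation and identifies $(\widetilde g_1,\widetilde g_2)^\top = \mathbb{K}^{-1}(g_1,g_2)^\top$.

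For the converse, I would start from $(\mathcal{P}u, \ell, \omega)$ satisfying the operator equation together with the formulas defining $(I-\mathcal{P})u$ and $\pi$. Setting $u := \mathcal{P}u + (I-\mathcal{P})D(\ell,\omega)$ and reading the first row of the operator equation, Proposition~\ref{equiv1} (converse part) immediately gives that $(u,\pi)$ solves \eqref{stokes1}, i.e.\ the first three equations of \eqref{eq:r1}, with precisely the prescribed pressure. To recover the two ODEs, I would multiply the last two rows by $\mathbb{K}$, obtaining the boxed equation above, and then run the integration-by-parts calculation in reverse: using the explicit expression of $\pi$ together with Lemma~\ref{lem:lift}, the right-hand side is rearranged into $-\int_{\partial\Omega_S(0)} \sigma(u,\pi)n\, d\gamma + g_1$ and $-\int_{\partial\Omega_S(0)} y\times \sigma(u,\pi)n\, d\gamma + g_2$ respectively.

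The main obstacle, and really the only non-bookkeeping point, is verifying that the boundary integrals involving the pressure are correctly captured by the operator $\mathbb{M}$ entering $\mathbb{K}$ and by the operator $\mathcal{C}_1$. Concretely one has to check that $\int_{\partial \Omega_S(0)} N_S((\ell+\omega\times y)\cdot n)\, n\, d\gamma$ and $\int_{\partial \Omega_S(0)} y\times N_S((\ell+\omega\times y)\cdot n)\, n\, d\gamma$ reproduce exactly the coefficient of $\lambda$ in the ODE contribution coming from $\pi$, and that the harmonic extension term $N(\nu\Delta\mathcal{P}u\cdot n)$ produces exactly the operator $\mathcal{C}_1$ acting on $\mathcal{P}u$. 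Both facts follow from the definitions \eqref{N}--\eqref{Ns} of $N$, $N_S$ and the solvability condition $\langle \nu \Delta \mathcal{P}u\cdot n,1\rangle = 0$ established in the proof of Proposition~\ref{equiv1}; the regularity of the relevant traces is ensured by $\mathcal{P}u \in W^{2,q}$ and Lemma~\ref{Normaltrace}. Once this identification is made, the equivalence is a direct matter of collecting the two computations in each direction.
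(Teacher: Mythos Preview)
Your proposal is correct and follows exactly the paper's approach: the proposition is presented there without a separate proof block, as a direct summary of Proposition~\ref{equiv1} for the fluid part together with the rigid-body computation (yielding $\lambda\mathbb{K}(\ell,\omega)^\top=\mathcal{C}_1\mathcal{P}u+\mathcal{C}_2(\ell,\omega)^\top+(g_1,g_2)^\top$) and the invertibility of $\mathbb{K}$ from Lemma~\ref{lem:inv-adm}. One minor remark: Lemma~\ref{lem:lift} is not actually needed in that computation---the decomposition $u=\mathcal{P}u+(I-\mathcal{P})D(\ell,\omega)$ and the pressure formula from Proposition~\ref{equiv1} already suffice.
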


We end this subsection with the following lemma
\begin{lemma} \label{lem:eqiv-norm}
The map
\begin{align*}
(\mpp u, \ell, \omega) \mapsto \|\mpp u\|_{W^{2,q}(\ofo)} + \|\ell\|_{\ct} + \|\omega\|_{\ct},
\end{align*}
is a norm on $\mathcal{D}(\mathcal{A}_{FS})$  equivalent to the graph norm.
\end{lemma}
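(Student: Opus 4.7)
The plan is to verify the two inequalities of the equivalence separately. The graph norm on $\mathcal{D}(\mathcal{A}_{FS})$ reads
$$\|(\mpp u,\ell,\omega)\|_{\mx} + \|\mathcal{A}_{FS}(\mpp u,\ell,\omega)\|_{\mx},$$
where $\|(\mpp u,\ell,\omega)\|_{\mx} = \|\mpp u\|_{L^q(\ofo)} + \|\ell\|_{\ct} + \|\omega\|_{\ct}$, and the goal is to compare it with $\|\mpp u\|_{W^{2,q}(\ofo)} + \|\ell\|_{\ct} + \|\omega\|_{\ct}$.

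\smallskip

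\textbf{The easy direction.} I would first check that the graph norm is dominated by the proposed norm. Three standard facts suffice: (i) by $L^{q}$-regularity for the Stokes lifting problem \eqref{Ds} (see \cite{Gal02} and \cref{lem:lift}), $\|D(\ell,\omega)\|_{W^{2,q}(\ofo)} \leq C(\|\ell\|_{\ct} + \|\omega\|_{\ct})$, so the same bound holds for $\mpp D(\ell,\omega)$; (ii) $A_{0} = \nu\mpp\Delta$ is obviously continuous from $W^{2,q}$ into $L^{q}$, giving $\|A_{0}\mpp u - A_{0}\mpp D(\ell,\omega)\|_{L^{q}} \leq C(\|\mpp u\|_{W^{2,q}} + \|\ell\|_{\ct} + \|\omega\|_{\ct})$; (iii) the operators $\mathcal{C}_{1}$ and $\mathcal{C}_{2}$ consist of boundary integrals of first derivatives of their arguments together with the harmonic extensions $N$ and $N_{S}$ defined in \eqref{N}--\eqref{Ns}, all of which are continuous by elliptic regularity of the Neumann problem and by the trace theorem. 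Summing these bounds yields one inequality.

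\smallskip

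\textbf{The non-trivial direction.} The heart of the argument is to recover the full $W^{2,q}$-norm of $\mpp u$ from the graph norm. By the very definition of $\mathcal{D}(\mathcal{A}_{FS})$, the function
$$\widetilde u := \mpp u - \mpp D(\ell,\omega)$$
belongs to $\mathcal{D}(A_{0})$, and, inspecting \eqref{op:afs}, the first component of $\mathcal{A}_{FS}(\mpp u,\ell,\omega)$ is precisely $A_{0}\widetilde u$. Consequently $\|A_{0}\widetilde u\|_{L^{q}(\ofo)}$ is dominated by the graph norm; and $\|\widetilde u\|_{L^{q}(\ofo)} \leq \|\mpp u\|_{L^{q}(\ofo)} + C(\|\ell\|_{\ct} + \|\omega\|_{\ct})$ is also controlled. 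I would then invoke the classical $L^{q}$-regularity theory for the Stokes system with homogeneous Dirichlet boundary conditions (which underlies \cref{prop:rsecS}, and is proved in detail in \cite{Gei10, Fab98}): the graph norm of $A_{0}$ is equivalent to the $W^{2,q}$-norm on $\mathcal{D}(A_{0}) = W^{2,q}(\ofo)^{3} \cap W^{1,q}_{0}(\ofo)^{3} \cap \mathcal{V}^{q}_{n}(\ofo)$. This gives
$$\|\widetilde u\|_{W^{2,q}(\ofo)} \leq C\bigl(\|A_{0}\widetilde u\|_{L^{q}(\ofo)} + \|\widetilde u\|_{L^{q}(\ofo)}\bigr),$$
and writing $\mpp u = \widetilde u + \mpp D(\ell,\omega)$ together with the bound on $\mpp D(\ell,\omega)$ from step (i) above closes the estimate.

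\smallskip

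\textbf{Main obstacle.} The only non-routine point is the appeal to full $W^{2,q}$-regularity for $A_{0}$, i.e., the identification of $\mathcal{D}(A_{0})$ as a Banach space with $W^{2,q}(\ofo)^{3} \cap W^{1,q}_{0}(\ofo)^{3} \cap \mathcal{V}^{q}_{n}(\ofo)$ equipped with the $W^{2,q}$-norm. Once this is taken from \cite{Gei10, Fab98} (and it is implicit in \cref{prop:rsecS}), the rest is bookkeeping based on the continuity of the auxiliary operators $D$, $N$ and $N_{S}$.
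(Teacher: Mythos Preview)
Your proposal is correct and follows essentially the same route that the paper has in mind: the paper does not spell out the proof but refers to \cite[Proposition 3.3]{Raymond-fsi}, and the argument there is precisely the one you describe---reduce to the Stokes operator $A_{0}$ via the splitting $\mpp u = \widetilde u + \mpp D(\ell,\omega)$ with $\widetilde u\in\mathcal{D}(A_{0})$, and then invoke the $W^{2,q}$-regularity of $A_{0}$. Your identification of that regularity as the only substantive input is accurate; the remaining bounds on $D$, $\mathcal{C}_{1}$, $\mathcal{C}_{2}$ are exactly the continuity facts the paper itself uses (and partly verifies) in the proof of \cref{thm:rbd-afs}.
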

\begin{proof}
The proof is similar to that of \cite[Proposition 3.3]{Raymond-fsi}.
\end{proof}

\subsection{$\mathcal{R}$-boundedness of the resolvent operator}
In this subsection we are going to prove \cref{th:31}. In view of \cref{prop:ev1} and \cref{prop:ev2}, it is enough to prove the following theorem
\begin{theorem} \label{thm:rbd-afs}
Let $1 < q < \infty.$ There exist $\mu_{0} > 0$  and $\theta \in (\pi/2, \pi)$ such that $\mu_{0} + \Sigma_{\theta} \subset \rho(\mathcal{A}_{FS})$ and 
\begin{align}
\mr_{\mathcal{L}(\mx)} \left\{ \lambda (\lambda I - \mathcal{A}_{FS})^{-1} \mid \lambda \in \mu_{0} + \Sigma_{\theta} \right\}  \leqslant C.
\end{align}
\end{theorem}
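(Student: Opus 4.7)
The plan is to solve the resolvent equation for $\mathcal{A}_{FS}$ explicitly, eliminating the fluid unknown and reducing it to a $6\times 6$ matrix equation in the rigid velocities $(\ell,\omega)$. The $\mathcal R$-sectoriality of the Stokes operator $A_0$ (\cref{prop:rsecS}) combined with the finite-dimensional character of $\mathbb{C}^6$ will then produce the $\mathcal R$-bound for the full resolvent.

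First, given $(h,k)\in\mathcal V^q_n(\ofo)\times\mathbb C^6$, I would perform the change of variable $\tilde v = \mpp u - \mpp D(\ell,\omega)$, which by definition of $\md(\mathcal A_{FS})$ places $\tilde v$ in $\md(A_0)$ and transforms $(\lambda - \mathcal A_{FS})(\mpp u,\ell,\omega)^\top = (h,k)^\top$ into
\begin{align*}
(\lambda - A_0)\tilde v &= h - \lambda\,\mpp D(\ell,\omega), \\
\lambda\begin{pmatrix}\ell\\ \omega\end{pmatrix} - \mathbb K^{-1}\mathcal C_2\begin{pmatrix}\ell\\ \omega\end{pmatrix} - \mathbb K^{-1}\mathcal C_1\bigl(\tilde v + \mpp D(\ell,\omega)\bigr) &= k.
\end{align*}
Using \cref{prop:rsecS}, I would solve the Stokes equation as $\tilde v = (\lambda - A_0)^{-1}h - \lambda(\lambda - A_0)^{-1}\mpp D(\ell,\omega)$. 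Substituting into the second equation produces $M(\lambda)(\ell,\omega)^\top = k + \mathbb K^{-1}\mathcal C_1(\lambda - A_0)^{-1}h$, where
\[
M(\lambda) := \lambda I - \mathbb K^{-1}\mathcal C_2 - \mathbb K^{-1}\mathcal C_1\mpp D + \lambda\,\mathbb K^{-1}\mathcal C_1(\lambda - A_0)^{-1}\mpp D.
\]

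The next step would be to show $\mathcal C_1(\lambda - A_0)^{-1}\mpp D \to 0$ in $\mathcal L(\mathbb C^6)$ as $|\lambda|\to\infty$ inside the sector. This forces $M(\lambda)/\lambda \to I$, giving invertibility of $M(\lambda)$ with $\|M(\lambda)^{-1}\|\lesssim|\lambda|^{-1}$ for $|\lambda|$ large. Since $\mathbb C^6$ is finite-dimensional, \cref{rem:Rbd}(2) automatically upgrades the uniform boundedness of $\{\lambda M(\lambda)^{-1}\}$ to $\mathcal R$-boundedness. The resolvent $(\lambda - \mathcal A_{FS})^{-1}$ would then be reassembled by composing the $\mathcal R$-bounded family $\{\lambda(\lambda - A_0)^{-1}\}$ from \cref{prop:rsecS}, the family $\{M(\lambda)^{-1}\}$, and the bounded operators $\mpp D$, $\mathcal C_1$, $\mathcal C_2$, $\mathbb K^{-1}$; the sum and composition rules \cref{rem:Rbd}(3)--(4) then deliver the desired $\mathcal R$-bound on $\{\lambda(\lambda - \mathcal A_{FS})^{-1} : \lambda\in\mu_0 + \Sigma_\theta\}$ for $\mu_0$ sufficiently large.

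The hard part will be proving the decay $\mathcal C_1(\lambda - A_0)^{-1}\mpp D \to 0$. Because $\mpp D(\ell,\omega)\in W^{2,q}(\ofo)\cap\mathcal V^q_n(\ofo)$ does not satisfy the homogeneous Dirichlet condition on $\partial\oso$, it is not in $\md(A_0)$, so one cannot simply commute $A_0$ through $(\lambda - A_0)^{-1}$ to extract decay; the direct trace bound $\|\mathcal C_1 w\|_{\mathbb C^6} \leqslant C\|w\|_{W^{2,q}}$ applied with $w = (\lambda - A_0)^{-1}\mpp D(\ell,\omega)$ only yields $O(1)$. My plan to circumvent this is to rewrite the boundary integrals in $\mathcal C_1 w$ as volume integrals, by testing the Stokes PDE solved by $w$ against the auxiliary velocities $W_i$ of \cref{lem:lift} and integrating by parts. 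This expresses $\mathcal C_1 w$ via $L^q$-pairings involving $w$, $\nabla w$ and the source $\mpp D(\ell,\omega)$, each of which decays with a negative power of $|\lambda|$ by the standard Stokes resolvent estimates, yielding the required $o(1)$ bound.
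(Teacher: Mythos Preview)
Your Schur-complement approach is correct and will work, but it takes a genuinely different and more laborious route than the paper. The paper splits
\[
\mathcal{A}_{FS} = \widetilde{\mathcal{A}}_{FS} + B_{FS},\qquad
\widetilde{\mathcal{A}}_{FS}=\begin{pmatrix} A_0 & -A_0\mpp D\\ 0 & 0\end{pmatrix},\qquad
B_{FS}=\begin{pmatrix} 0 & 0\\ \mathbb{K}^{-1}\mathcal{C}_1 & \mathbb{K}^{-1}\mathcal{C}_2\end{pmatrix}.
\]
For $\widetilde{\mathcal{A}}_{FS}$ the resolvent is block upper-triangular and, after the identity $-(\lambda-A_0)^{-1}A_0\mpp D = -\lambda(\lambda-A_0)^{-1}\mpp D + \mpp D$, its $\mathcal R$-boundedness follows directly from \cref{prop:rsecS} and \cref{rem:Rbd}. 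The crucial observation is then that $B_{FS}$ has range in $\{0\}\times\mathbb{C}^6$, hence is \emph{finite rank}; by a standard fact (e.g.\ \cite[III, Lemma~2.16]{Eng-Nag}) it is $\widetilde{\mathcal{A}}_{FS}$-bounded with relative bound zero, and the perturbation result \cref{pr:perturb} finishes the proof in one line. Your route instead computes the full resolvent explicitly, which buys you a concrete formula but forces you to analyze the $6\times6$ matrix $M(\lambda)$ and to verify the $\mathcal R$-bounds of each block by hand.

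Regarding what you call ``the hard part'': your integration-by-parts plan against the $W_i$ of \cref{lem:lift} would work, but it is overkill. Since $0\in\rho(A_0)$ (\cref{prop:rsecS}), you may write
\[
\mathcal{C}_1(\lambda-A_0)^{-1}\mpp D = \bigl(\mathcal{C}_1 A_0^{-1}\bigr)\,\bigl[\lambda(\lambda-A_0)^{-1}-I\bigr]\,\mpp D,
\]
where $\mathcal{C}_1 A_0^{-1}\in\mathcal L(\mathcal V^q_n,\mathbb{C}^6)$ is a fixed bounded operator. For each fixed $(\ell,\omega)$ the element $\mpp D(\ell,\omega)\in\mathcal V^q_n$ is fixed, and the strong convergence $\lambda(\lambda-A_0)^{-1}\to I$ in $\mathcal V^q_n$ (uniform over the sector, a standard consequence of sectoriality) gives $\mathcal{C}_1(\lambda-A_0)^{-1}\mpp D(\ell,\omega)\to 0$ in $\mathbb{C}^6$. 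Because both domain and target are $\mathbb{C}^6$, this pointwise convergence on a basis is already operator-norm convergence, and you are done without ever writing a boundary integral. The same factorization also makes the $\mathcal R$-boundedness assembly straightforward: $\lambda(\lambda-A_0)^{-1}-I = A_0(\lambda-A_0)^{-1}$ is $\mathcal R$-bounded in $\mathcal L(\mathcal V^q_n)$, and composition with fixed bounded operators preserves this.
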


\begin{proof}
We write $\mathcal{A}_{FS} $   in the form $\mathcal{A}_{FS} = \widetilde {\mathcal{A}}_{FS} + B_{FS}$ where
\begin{align*}
\widetilde{\mathcal{A}}_{FS}  = \begin{pmatrix}  A_{0}  &    -  A_{0}\mpp D \\  0 & 0  \end{pmatrix}, \quad
B_{FS} = \begin{pmatrix}  0  &    0 \\  \mathbb{K}^{-1} \mathcal{C}_{1} & \mathbb{K}^{-1} \mathcal{C}_{2}  \end{pmatrix}.
\end{align*}
We first show that $\widetilde{\mathcal{A}}_{FS}$ with $\mathcal{D}(\widetilde{\mathcal{A}}_{FS}) = \mathcal{D}(\mathcal{A}_{FS})$ is a $\mr$-sectorial operator on $\mx$. Observe that
\begin{align*}
\lambda (\lambda I  - \widetilde{\mathcal{A}}_{FS})^{-1} =  \begin{pmatrix}
\lambda (\lambda I -  A_{0})^{-1} & -(\lambda I -  A_{0})^{-1} A_{0} \mpp D \\ 0 & I
\end{pmatrix}.
\end{align*}
Since
$$
-(\lambda I -  A_{0})^{-1} A_{0} \mpp D = -\lambda(\lambda I - A_{0})^{-1} \mpp D + \mpp D,
$$
we get
\begin{align*}
\lambda (\lambda I  - \widetilde{\mathcal{A}}_{FS})^{-1} =  \begin{pmatrix}
\lambda (\lambda I -  A_{0})^{-1} &-\lambda(\lambda I - A_{0})^{-1}\mpp  D + \mpp D \\ 0 & I
\end{pmatrix}.
\end{align*}
Therefore by \cref{{prop:rsecS}} and \cref{rem:Rbd}, there exists $\theta \in (\pi/2,\pi)$ such that
\begin{align} \label{eq:rb1}
\mathcal{R}_{\mathcal{L}(\mx)} \left\{ \lambda(\lambda I  - \widetilde{{\mathcal A}}_{FS})^{-1} \mid \lambda \in \Sigma_{\theta} \right\} \leqslant C.
\end{align}

Let us now show that,  $\mathcal{C}_{1} \in \mathcal{L}(\mathcal{D}(\mathcal{A}_{FS}), \ct\times \ct).$ By \cref{lem:eqiv-norm}, for any $(\mpp u, \ell, \omega) \in \mathcal{D}(\mathcal{A}_{FS})$ we have $(\mpp u, \ell, \omega) \in W^{2,q}(\ofo) \times \ct \times \ct.$ Therefore, by trace theorem  $\varepsilon(\mpp u) n \in W^{1-1/q,q}(\poso)$ and hence $\ds \int_{\poso} \varepsilon(\mpp u)n \ \dg \in \ct.$ On the other hand, $\Delta \mpp u \in L^{q}(\ofo)$ and $\mathrm{div} \ \Delta\mpp u = 0.$ Therefore by \cref{Normaltrace}, the term $\Delta \mpp u \cdot n$ belongs to $W^{-1/q,q}(\partial\ofo)$ and satisfies the following condition
\begin{align*}
\left\langle \Delta \mpp u\cdot n , 1 \right\rangle_{W^{-1/q,q}, W^{1-1/q',q'}}  = 0.
\end{align*}
Thus by \cite[Theorem 9.2]{Fab98},  $N (\Delta \mpp u \cdot n) \in W^{1,q}(\ofo)$ and $\ds \int_{\partial\oso} N (\Delta \mpp u \cdot n) n \ \dg \in \ct.$ Other terms of the operator $\mathcal{C}_{1}$ can be checked in a similar manner.  Thus $\mathcal{C}_{1} \in \mathcal{L}(\mathcal{D}(\mathcal{A}_{FS}), \ct\times \ct).$ Similarly, one can easily verify that $\mathcal{C}_{2} \in \mathcal{L}(\ct\times \ct, \ct\times \ct).$ Therefore the operator $B_{FS}$ with $\mathcal{D}(B_{FS}) = \mathcal{D}(\mathcal{A}_{FS})$ is a finite rank operator. By \cite[Chapter III, Lemma 2.16]{Eng-Nag}, $B_{FS}$ is a $\widetilde{\mathcal{A}}_{FS}$-bounded operator with relative bound zero. Finally using \cref{pr:perturb} we conclude the proof of the theorem.
\end{proof}

\section{Exponential stability of linear fluid-structure interaction operator} \label{sec:exp}
The aim of this section is to show that the operator $\mathbb{A}$ or equivalently the operator $\mathcal{A}_{FS}$ generates an exponentially stable semigroup.  More precisely, we prove:

\begin{theorem} \label{th:exp-st}
Let $1 < q < \infty.$ The operator $\mathcal{A}_{FS}$ generates an exponentially stable semigroup $\ds \left( e^{t\mathcal{A}_{FS}}\right)_{t \geqslant 0}$ on $\mx.$ Equivalently,  the operator $\mathbb{A}$ generates an exponentially stable semigroup $\ds \left( e^{t\mathbb{A}}\right)_{t \geqslant 0}$ on $\mathbb{H}^{q}(\Omega)$. In other words, there exist constants $C> 0$ and $\eta_{0} > 0$ such that
\begin{equation}
\left\|e^{t\mathcal{A}_{FS}}(u_{0}, \ell_{0}, \omega_{0})^{\top} \right\|_{\mathcal{X}} \leqslant C e^{-\eta_{0} t} \left\|(u_{0}, \ell_{0}, \omega_{0})^{\top}\right\|_{\mathcal{X}}.
\end{equation}
\end{theorem}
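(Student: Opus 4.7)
The plan is to combine the $\mr$-sectoriality established in \cref{thm:rbd-afs} with a spectral argument. Since $\ofo$ is bounded, Rellich--Kondrachov together with \cref{lem:eqiv-norm} gives that the embedding $\md(\mathcal{A}_{FS}) \hookrightarrow \mx$ is compact, so $\sigma(\mathcal{A}_{FS})$ consists of isolated eigenvalues of finite algebraic multiplicity and $\mathcal{A}_{FS}$ generates an analytic semigroup (since the sector in \cref{thm:rbd-afs} has angle $\theta > \pi/2$). For an analytic semigroup on a Banach space the spectral mapping theorem $\sigma(e^{t\mathcal{A}_{FS}}) \setminus \{0\} = e^{t\sigma(\mathcal{A}_{FS})}$ holds, so the growth bound coincides with the spectral bound $s(\mathcal{A}_{FS})$. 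Moreover, the sectorial confinement of \cref{thm:rbd-afs} implies that for any $M > 0$ the spectrum in the strip $-M \leqslant \mathrm{Re}(\lambda) \leqslant 0$ is contained in a bounded rectangle, hence finite by discreteness. Consequently, it is enough to prove that every eigenvalue $\lambda$ satisfies $\mathrm{Re}(\lambda) < 0$: this automatically yields $s(\mathcal{A}_{FS}) \leqslant -\eta_0$ for some $\eta_0 > 0$.

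To rule out eigenvalues in the closed right half-plane, take a non-trivial eigenfunction $(\mpp u, \ell, \omega) \in \md(\mathcal{A}_{FS})$ for some $\lambda \in \mathbb{C}$. By \cref{prop:ev2} the full fluid velocity $u$ and a pressure $\pi \in W^{1,q}_m(\ofo)$ can be reconstructed so that $(u,\pi,\ell,\omega)$ solves the homogeneous version of \eqref{eq:r1} (with $f = 0$, $g_1 = g_2 = 0$). A regularity bootstrap---Sobolev embedding of $W^{2,r}(\ofo)$ into $L^{r^*}$ with $r^* > r$, followed by Cattabriga--Solonnikov regularity for the Stokes system $-\nu\Delta u + \nabla \pi = -\lambda u$ with smooth Dirichlet data (zero on $\partial\Omega$ and $\ell + \omega \times y$ on $\poso$)---lifts $u$ in finitely many steps to $W^{2,2}(\ofo)$. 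Hence $(u,\ell,\omega)$ is simultaneously an eigenfunction in the $L^2$-setting, where one can test the equation $\lambda u = \mathrm{div}\,\sigma(u,\pi)$ against $\bar u$, integrate by parts, and use the structure equations to evaluate the boundary integrals on $\poso$. This yields the energy identity
\begin{equation*}
\lambda \left( \int_{\ofo} |u|^2 \, \mathrm{d}y + m |\ell|^2 + J(0)\omega \cdot \bar\omega \right) + 2\nu \int_{\ofo} |\varepsilon(u)|^2 \, \mathrm{d}y = 0.
\end{equation*}

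Let $A$ denote the bracketed quantity and $B := 2\nu \int_{\ofo} |\varepsilon(u)|^2$. Since $J(0)$ is real symmetric positive definite, both $A$ and $B$ are non-negative real numbers. The imaginary part of the identity gives $\mathrm{Im}(\lambda)\, A = 0$, and $A = 0$ would force $u \equiv 0$, $\ell = \omega = 0$, contradicting non-triviality; hence $\lambda$ is real. The case $\lambda = 0$ is excluded by a rigidity argument: $\varepsilon(u) = 0$ in $\ofo$ combined with $u|_{\partial\Omega} = 0$ forces $u \equiv 0$ in $\ofo$, after which $\ell + \omega \times y = 0$ on $\poso$ gives $\ell = \omega = 0$. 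Therefore $\lambda < 0$, completing the proof. The main technical obstacle is the bootstrap step, needed to transfer an $L^q$-eigenfunction with $q < 2$ into the $L^2$-framework where the Hermitian energy identity applies directly; everything else is a direct consequence of $\mr$-sectoriality, compact resolvent, and the spectral mapping theorem for analytic semigroups.
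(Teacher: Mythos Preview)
Your argument is correct and follows essentially the same route as the paper: compact resolvent, analyticity from \cref{thm:rbd-afs}, and the $L^2$ energy identity to exclude spectrum in $\{\mathrm{Re}\,\lambda \geqslant 0\}$. Two organisational differences are worth noting. First, to pass from ``no right half-plane spectrum'' to a strictly negative growth bound, you invoke discreteness of the spectrum together with the spectral mapping theorem for analytic semigroups, whereas the paper instead proves a \emph{uniform} resolvent bound on $\{\mathrm{Re}\,\lambda \geqslant 0\}$ (combining the $\mr$-bound on $\mu_0+\Sigma_\theta$ with continuity of the resolvent on the compact leftover region, see \cref{th:dd}) and then cites a general criterion for analytic semigroups. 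Second, your bootstrap from $L^q$ to $L^2$ iterates Cattabriga--Solonnikov regularity for the Stokes problem, while the paper does it in one step by rewriting the eigenequation as $(\widetilde\lambda I - \mathcal{A}_{FS})(\mpp u,\ell,\omega)^\top = (\widetilde\lambda - \lambda)(\mpp u,\ell,\omega)^\top$ and observing that the right-hand side already lies in $L^2$ (since $W^{2,q}(\ofo) \hookrightarrow L^2(\ofo)$ for every $q>1$ in three dimensions), so invertibility of $\widetilde\lambda I - \mathcal{A}_{FS}$ in the $L^2$ setting immediately gives $W^{2,2}$ regularity. Both routes are valid; the paper's resolvent trick is slightly slicker, while your spectral-mapping packaging makes the role of discreteness more explicit.
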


To prove this theorem we first show that the set $\left\{ \lambda \in \mathbb{C} \mid\mathrm{Re} \lambda \geqslant 0 \right\}$, i.e, the entire right half plane is contained in the resolvent set of $\mathcal{A_{FS}}$.

\begin{theorem}  \label{th:dd}
Assume $1 < q < \infty$ and $\lambda \in \mathbb{C},$ with $\mathrm{Re} \lambda \geqslant 0.$ Then for any $(f,g_{1},g_{2}) \in \mx,$
the system \eqref{eq:r1} admits a unique solution satisfying the estimate
 \begin{align} \label{est:00}
 \|u\|_{W^{2,q}(\ofo)^{3}} + \|p\|_{W^{1,q}_{m}(\ofo)} + \|\ell\|_{\ct} + \|\omega\|_{\ct} \leqslant C \|(f,g_{1},g_{2})\|_{\mx}.
 \end{align}
\end{theorem}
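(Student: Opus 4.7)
The goal is to extend the resolvent region established in \cref{th:31} from a shifted sector to the full closed right half-plane, and then to read off the estimate. The argument I would follow has three steps.

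\emph{Step 1: compactness of the resolvent.} By \cref{th:31}, $\rho(\mathcal{A}_{FS})$ is non-empty. For any $\lambda_{0} \in \rho(\mathcal{A}_{FS})$, \cref{lem:eqiv-norm} shows that $(\lambda_{0}I - \mathcal{A}_{FS})^{-1}$ maps $\mx$ boundedly into $\mathcal{D}(\mathcal{A}_{FS})$ equipped with the norm $\|\mathcal{P}u\|_{W^{2,q}(\ofo)} + |\ell| + |\omega|$. Since $\ofo$ is bounded, Rellich--Kondrachov gives the compact embedding $W^{2,q}(\ofo) \hookrightarrow L^{q}(\ofo)$, so the resolvent is compact as an operator on $\mx$. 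Consequently $\sigma(\mathcal{A}_{FS})$ consists of isolated eigenvalues of finite algebraic multiplicity.

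\emph{Step 2: absence of eigenvalues with $\mathrm{Re}\,\lambda \geqslant 0$.} I would rule out such eigenvalues by an $L^{2}$ energy identity. Suppose $(u,\pi,\ell,\omega) \in W^{2,q}(\ofo)^{3} \times W^{1,q}_{m}(\ofo) \times \ct \times \ct$ solves the homogeneous version of \eqref{eq:r1}. Since the Dirichlet datum $\ell + \omega \times y$ is smooth and the source is zero, iterated elliptic regularity for the stationary Stokes system with smooth boundary promotes $u$ and $\pi$ to $C^{\infty}(\overline{\ofo})$, so the following computation is fully justified regardless of how small $q$ is. Taking the $L^{2}(\ofo)$ inner product of the first equation of \eqref{eq:r1} with $\overline{u}$, integrating by parts using $\mathrm{div}\,u = 0$ and the Dirichlet conditions, and converting the boundary term on $\poso$ into a bulk quantity via the homogeneous equations for $\ell$, $\omega$ and the identity $(y \times v) \cdot \overline{\omega} = v \cdot (\overline{\omega} \times y)$ yields
\[
\lambda\bigl(\|u\|_{L^{2}(\ofo)}^{2} + m|\ell|^{2} + J(0)\omega\cdot \overline{\omega}\bigr) + 2\nu\|\varepsilon(u)\|_{L^{2}(\ofo)}^{2} = 0.
\]
Taking real parts and invoking the positive-definiteness of $J(0)$ forces $\varepsilon(u) \equiv 0$. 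Korn's inequality together with $u|_{\partial\Omega} = 0$ gives $u \equiv 0$, and evaluating $\ell + \omega \times y = 0$ at sufficiently many non-coplanar points of $\poso$ yields $\ell = 0$ and $\omega = 0$.

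\emph{Step 3: existence, uniqueness, and estimate.} Combining Steps 1 and 2, $\{\lambda \in \mathbb{C} : \mathrm{Re}\,\lambda \geqslant 0\} \subset \rho(\mathcal{A}_{FS})$. Given $(f,g_{1},g_{2}) \in \mx$, \cref{prop:ev2} then produces the unique solution $(u,\pi,\ell,\omega)$ of \eqref{eq:r1}: the triple $(\mathcal{P}u,\ell,\omega) = (\lambda I - \mathcal{A}_{FS})^{-1}(\mathcal{P}f,\widetilde{g}_{1},\widetilde{g}_{2})$, the irrotational part $(I-\mathcal{P})u = (I-\mathcal{P})D(\ell,\omega)$, and the pressure from the explicit formula involving $N$ and $N_{S}$. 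The estimate \eqref{est:00} follows at each fixed $\lambda$ from the boundedness of the resolvent, \cref{lem:eqiv-norm}, and the continuity of the lifting operator $D$ and the Neumann operators $N$, $N_{S}$ on the relevant Sobolev scales. The main delicate point I foresee is justifying the $L^{2}$ energy identity when $q < 6/5$, a range in which $W^{2,q}(\ofo)$ does not embed into $H^{1}(\ofo)$ directly; this is precisely what the elliptic bootstrap in Step~2 is designed to handle.
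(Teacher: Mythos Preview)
Your proposal is correct and follows the same strategy as the paper: compactness of the resolvent plus the Fredholm alternative reduce existence to uniqueness, and uniqueness is obtained from the $L^{2}$ energy identity. The only difference is how the energy identity is justified when $q<2$: the paper rewrites the homogeneous system as $(\widetilde\lambda I - \mathcal{A}_{FS})(\mathcal{P}u,\ell,\omega) = (\widetilde\lambda - \lambda)(\mathcal{P}u,\ell,\omega)$ for some $\widetilde\lambda \in \rho(\mathcal{A}_{FS})$ and uses the $q=2$ resolvent together with $W^{2,q}(\ofo)\hookrightarrow L^{2}(\ofo)$ to land directly in $W^{2,2}$, whereas you iterate classical Stokes elliptic regularity to reach $C^{\infty}$; both routes are valid.
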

\begin{proof}
Let us recall, by \cref{prop:ev2}, the  system \eqref{eq:r1}  is equivalent to
\begin{align} \label{d11}
&(\lambda I - \mathcal{A}_{FS}) \begin{pmatrix}
\mpp u \\ \ell \\ \omega
\end{pmatrix} = \begin{pmatrix}
\mpp f \\ \widetilde g_{1} \\ \widetilde g_{2}
\end{pmatrix}, \notag  \\
&(I - \mpp) u = (I - \mpp) D (\ell,\omega ),  \\
&\pi = N (\nu \Delta \mpp u \cdot n) - \lambda N_{S} ((\ell + \omega \times y) \cdot n), \notag
\end{align}
where $(\widetilde g_{1}, \widetilde g_{2})^{\top}  = \mathbb{K}^{-1}(g_{1},g_{2})^{\top}.$
By \cref{thm:rbd-afs},  there exists $\widetilde \lambda > \mu_{0}$ such that $(\widetilde \lambda I  - \mathcal{A}_{FS})$ is invertible. Consequently,  \eqref{d11} can be written as
\begin{align}
&\begin{pmatrix} \label{d12}
\mpp u \\ \ell \\ \omega
\end{pmatrix}  = \left[ I + (\lambda - \widetilde\lambda) (\widetilde\lambda - \mathcal{A}_{FS} )^{-1}\right]^{-1}  \left(\widetilde\lambda - \mathcal{A}_{FS} \right)^{-1} \begin{pmatrix}
\mpp f \\ \widetilde g_{1} \\ \widetilde g_{2}
\end{pmatrix}, \notag \\
&(I - \mpp) u = (I - \mpp) D (\ell,\omega ),  \\
&\pi = N (\nu \Delta \mpp u \cdot n) - \lambda N_{S} ((\ell + \omega \times y) \cdot n). \notag
\end{align}
Since $\ds \left(\widetilde\lambda - \mathcal{A}_{FS} \right)^{-1}$ is a compact operator, in view of Fredholm alternative theorem, the existence and uniqueness of system \eqref{d12} are equivalent. Therefore, in the sequel we show the uniqueness of the solutions \eqref{eq:r1}. Once we prove the uniqueness, the estimate \eqref{est:00} follows easily from \eqref{d12}. Let $(u, \pi, \ell, \omega) \in W^{2,q}(\ofo)^{3} \times W^{1,q}_{m}(\ofo) \times \ct \times \ct$ satisfies the homogeneous system
\begin{alignat}{2} \label{eq:r3}
&\lambda u - \nu \Delta u + \nabla \pi  = 0, \quad \mathrm{div} \ u = 0,  &  \mbox{ in }   \ofo, \notag \\
& u = 0   & \mbox{ on }  \partial\Omega,\notag  \\
&  u  = \ell  + \omega \times y &   \mbox{ on }    \poso, \notag \\
&   \lambda  m \ell =  -  \int_{\partial \Omega_{S}(0)}  \sigma(u,\pi) n \ d\gamma , \\
&  \lambda J(0) \omega =  - \int_{\partial \Omega_{S}(0)} y \times   \sigma(u,\pi)n \ d\gamma.\notag
\end{alignat}
We first show that $(u,\pi) \in W^{2,2}(\ofo)^{3} \times W_{m}^{1,2}(\ofo).$ If $q\geqslant 2,$ this follows from  H\"older's estimate.  Assume $1 < q <2.$ In that case, we can rewrite \eqref{eq:r3} as follows
\begin{align} \label{d14}
&(\widetilde\lambda I - \mathcal{A}_{FS}) \begin{pmatrix}
\mpp u \\ \ell \\ \omega
\end{pmatrix} = (\widetilde \lambda - \lambda) \begin{pmatrix}
\mpp u \\ \ell \\ \omega
\end{pmatrix}, \notag \\
&(I - \mpp) u = (I - \mpp) D (\ell,\omega ),  \\
&\pi = N (\nu \Delta \mpp u \cdot n) - \lambda N_{S} ((\ell + \omega \times y) \cdot n). \notag
\end{align}
Since $W^{2,q}(\ofo) \subset L^{2}(\ofo)$ and $(\widetilde\lambda I - \mathcal{A}_{FS}) $ is invertible, we deduce that $(u,\pi) \in W^{2,2}(\ofo)^{3} \times W_{m}^{1,2}(\ofo).$

Multiplying $\eqref{eq:r3}_{1}$ by $\overline{u},$ $\eqref{eq:r3}_{4}$ by $\overline \ell$ and $\eqref{eq:r3}_{5}$ by $\overline \omega,$ we obtain after integration by parts,
\begin{align*}
\lambda \int_{\ofo} |u|^{2} \ {\rm d} y+ 2\nu \int_{\ofo} \varepsilon(u) : \varepsilon(\overline u) \ {\rm d} y + \lambda m |\ell|^{2} + \lambda J(0)\omega \cdot \overline \omega = 0.
\end{align*}
Taking real part of the above equation we obtain
\begin{align*}
\mathrm{Re}\lambda \int_{\ofo} |u|^{2} \ {\rm d} y + 2\nu \int_{\ofo} |\varepsilon(u)|^{2} \ {\rm d} y  + \mathrm{Re}\lambda m |\ell|^{2} + \mathrm{Re}(\lambda J(0)\omega \cdot \overline \omega )= 0.
\end{align*}
Since $\mathrm{Re} \lambda \geqslant 0,$ we have
\begin{equation*}
2\nu \int_{\ofo} |\varepsilon(u)|^{2} \ {\rm d} y = 0.
\end{equation*}
The above estimate and the fact that $u = 0$ on $\partial\Omega$ imply that $u = 0.$ Next, using $u = \ell + \omega \times y$ for $y \in \poso,$ we get $\ell = \omega = 0.$ Finally, as  $\pi \in W^{1,q}_{m}(\ofo),$ we have $\pi = 0.$
\end{proof}

\begin{proof}[Proof of \cref{th:exp-st}]
From \cref{th:dd}, we have
\begin{equation*}
\left\{ \lambda \in \mathbb{C} \mid \mathrm{Re} \ \lambda \geqslant 0\right\} \subset \rho(\mathcal{A}_{FS}).
\end{equation*}
Also, by \cref{thm:rbd-afs} we have the existence of a constant $C > 0$ such that for any $\lambda \in \mu_{0} + \Sigma_{\theta}$ with $\theta \in (\pi/2,\pi),$
\begin{equation*}
\left\|\left(\lambda - \mathcal{A}_{FS} \right)^{-1} \right\|_{\mathcal{L}(\mx)} \leqslant C.
\end{equation*}
Since $\ds \left\{\lambda \in \mathbb{C} \mid \mathrm{Re} \ \lambda \geqslant 0  \right\} \setminus [\mu_{0} + \Sigma_{\theta}]$ is a compact set, we deduce the existence of a constant $C > 0$ such that for any $\lambda \in \mathbb{C}$ with $\mathrm{Re} \ \lambda \geqslant 0$
\begin{equation*}
\left\|\left(\lambda - \mathcal{A}_{FS} \right)^{-1} \right\|_{\mathcal{L}(\mx)} \leqslant C.
\end{equation*}
 This yields that
\begin{equation*}
\left\{\lambda \in \mathbb{C} \mid \mathrm{Re} \ \lambda \geqslant - \eta \right\} \subset \rho(\mathcal{A}_{FS}),
\end{equation*}
for some $\eta > 0.$ As $\mathcal{A}_{FS}$ generates an analytic semigroup, applying Proposition 2.9 of  \cite[Part II, Chapter 1, pp 120]{BDDM}, we obtain exponential stability of $\mathcal{A}_{FS}$ in $\mx.$
\end{proof}

\section{Maximal $L^{p}$-$L^{q}$ regularity of the system \eqref{fsi-l} } \label{sec:maxlp}

In this section we prove the maximal $L^{p}$-$L^{q}$ regularity of a version of the system  the \eqref{fsi-l} with non zero divergence. Treating a non zero divergence term will be useful in the next section in order to tackle some terms coming from a simple change of variables. More precisely, we consider the system
\begin{alignat}{2} \label{fsi-l-d}
&\partial_{t} u - \nu \Delta u + \nabla \pi  = f, \quad \mathrm{div} \ u = \mathrm{div}\  h  & \quad t \in   (0,\infty),  \ y \in  \ofo, \notag \\
& u = 0  & t \in  (0,\infty), \ y \in \partial\Omega,\notag  \\
&  u  = \ell  + \omega \times y & \quad t\in  (0,\infty), \  y \in  \poso, \notag \\
&  m\frac{d}{dt} \ell =  -  \int_{\partial \Omega_{S}(0)}  \sigma(u,\pi) n \ d\gamma  +  g_{1} & \quad t \in (0,\infty), \\
&  J(0)\frac{d}{dt} \omega =  - \int_{\partial \Omega_{S}(0)} y \times   \sigma(u,\pi)n \ d\gamma + g_{2} &  \quad t \in (0,\infty), \notag \\
&  u (0,y) = u_{0}(y) & y\in   \Omega_{F}(0),  \notag \\
&  \ell(0) = \ell_{0}, \quad  \omega(0) = \omega_{0}. \notag
\end{alignat}
We set
\begin{align*}
W^{2,1}_{q,p} (Q^{F}_{\infty}) =  L^{p}(0,\infty;W^{2,q}(\ofo)) \cap W^{1,p}(0,\infty;L^{q}(\ofo)),
\end{align*}
with
\begin{align*}
\|u\|_{W^{2,1}_{q,p} (Q^{F}_{\infty})} := \|u\|_{L^{p}(0,\infty;W^{2,q}(\ofo))} + \|u\|_{W^{1,p}(0,\infty;L^{q}(\ofo))}.
\end{align*}
We prove the following theorem
\begin{theorem} \label{lplq-L}
Let $1 < p,q < \infty$ such that $\ds \frac{1}{p} + \frac{1}{2q} \neq 1.$ Let $\eta \in [0,\eta_{0}),$ where $\eta_{0}$ is the constant introduced in \cref{th:exp-st}.
Let us also assume that  $\ell_{0} \in \rt,$ $\omega_{0} \in \rt$ and $u_{0} \in B^{2(1-1/p)}_{q,p}(\ofo)$ satisfying the compatibility conditions
\begin{align} \label{cc}
&\mathrm{div} \  u_{0} = 0 \mbox{ in } \ofo,  \notag \\
& u_{0} = \ell_{0} + \omega_{0} \times y \mbox{ on } \poso, \quad u_{0} = 0 \mbox{ on } \partial\Omega \mbox{ if } \frac{1}{p} + \frac{1}{2q} < 1  \\
\mbox{ and } & u_{0} \cdot n = (\ell_{0} + \omega_{0} \times y) \cdot n \mbox{ on } \poso, \quad \quad u_{0} \cdot n = 0 \mbox{ on } \partial \Omega \mbox{ if } \frac{1}{p} + \frac{1}{2q} > 1. \notag
\end{align}
Then for any $e^{\eta t} f \in L^{p}(0,\infty;L^{q}(\ofo))^{3},$ $e^{\eta t} h \in W^{2,1}_{q,p} (Q^{F}_{\infty})^{3},$ $e^{\eta t} g_{1} \in L^{p}(0,\infty;\rt)$ and $ e^{\eta t} g_{2} \in  L^{p}(0,\infty;\rt)$ satisfying
\[ h(0,y) = 0 \mbox{ for all } (t,y) \in (0,\infty) \times \ofo \mbox{ and } h|_{\partial\ofo} = 0,
\]the system \eqref{fsi-l} admits a unique strong solution
\begin{gather*}
e^{\eta t}u \in L^{p}(0,\infty;W^{2,q}(\ofo)^{3}) \cap W^{1,p}(0,\infty;L^{q}(\ofo)^{3}) \\
 e^{\eta t} \pi \in L^{p}(0,\infty;W^{1,q}_{m}(\ofo)) \\
 e^{\eta t} \ell  \in W^{1,p}(0,\infty;\rt), \quad e^{\eta t} \omega  \in W^{1,p}(0,\infty;\rt).
\end{gather*}
Moreover, there exists a constant $C_{L} > 0$ depending only on $\Omega, p$ and $q$ such that
\begin{multline} \label{est:L}
\|e^{\eta (\cdot)}u \|_{W^{2,1}_{q,p} (Q^{F}_{\infty})^{3}}
+ \|e^{\eta (\cdot)}\pi \|_{L^{p}(0,\infty;W^{1,q}(\ofo))}
+ \|e^{\eta (\cdot)}\ell \|_{L^{p}(0,\infty;\rt)} \\ + \|e^{\eta (\cdot)}\omega \|_{L^{p}(0,\infty;\rt)} \leqslant C_{L} \Big(\|u_{0}\|_{B^{2(1-1/p)}_{q,p}(\ofo)} + \|\ell_{0}\|_{\rt} + \|\omega_{0}\|_{\rt} \\
+ \|e^{\eta (\cdot)} f \|_{L^{p}(0,\infty;L^{q}(\ofo))} + \|e^{\eta (\cdot)} h\|_{W^{2,1}_{q,p} (Q^{F}_{\infty})^{3}} + \|e^{\eta (\cdot)} g_{1} \|_{L^{p}(0,\infty;\rt)} + \|e^{\eta (\cdot)} g_{2} \|_{L^{p}(0,\infty;\rt)}\Big).
\end{multline}
\end{theorem}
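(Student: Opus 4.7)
My plan is to reduce to the homogeneous divergence setting, recast the problem as an abstract Cauchy problem for $\mathbb{A}$ on $\mathbb{H}^{q}(\Omega)$, absorb the exponential weight $e^{\eta t}$ by a bounded operator shift, and finally recover the pressure from an elliptic Neumann problem. The heart of the argument is \cref{th:ab-i} applied to $\mathbb{A}+\eta I$, whose hypotheses are supplied by \cref{th:main1} and \cref{th:exp-st}.

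First I would set $v:=u-h$. Because $h|_{\partial\ofo}=0$ and $h(0,\cdot)=0$, the field $v$ is divergence-free, shares the boundary datum $\ell+\omega\times y$ on $\poso$, vanishes on $\partial\Omega$, and satisfies $v(0,\cdot)=u_{0}$. The momentum equation becomes $\partial_{t}v-\nu\Delta v+\nabla\pi=\tilde f$ with $\tilde f:=f-\partial_{t}h+\nu\Delta h$, while the linear and angular balances pick up the boundary integrals $-2\nu\int_{\poso}\varepsilon(h)n\,\dg$ and $-2\nu\int_{\poso}y\times\varepsilon(h)n\,\dg$. Trace theory on $h\in W^{2,1}_{q,p}(Q^{F}_{\infty})^{3}$ shows that these correction terms, together with $\tilde f$, retain the desired $L^{p}$-regularity with the weight $e^{\eta t}$. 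Hence it suffices to treat the case $h\equiv 0$.

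Next I would extend $v$ by $\ell+\omega\times y$ inside $\oso$ to obtain $V\in C([0,\infty);\mathbb{H}^{q}(\Omega))$. By \cref{prop:ev1}, the system is equivalent to
\begin{equation*}
V'(t)=\mathbb{A}V(t)+F(t), \qquad V(0)=V_{0},
\end{equation*}
with $F:=\mathbb{P}\bigl(\tilde f\,\mathbbm{1}_{\ofo}+(m^{-1}\tilde g_{1}+J(0)^{-1}y\times\tilde g_{2})\mathbbm{1}_{\oso}\bigr)$ and $V_{0}:=u_{0}\,\mathbbm{1}_{\ofo}+(\ell_{0}+\omega_{0}\times y)\mathbbm{1}_{\oso}$. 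To handle the weight, I would set $\tilde V(t):=e^{\eta t}V(t)$, so that $\tilde V'=(\mathbb{A}+\eta I)\tilde V+e^{\eta t}F$. Since $\eta<\eta_{0}$, by \cref{pr:perturb} the shifted operator $\mathbb{A}+\eta I$ remains $\mr$-sectorial of angle strictly below $\pi/2$, and by \cref{th:exp-st} it still generates a semigroup of negative exponential type. \cref{th:ab-i} then yields a unique $\tilde V\in W^{1,p}(0,\infty;\mathbb{H}^{q}(\Omega))\cap L^{p}(0,\infty;\mathcal{D}(\mathbb{A}))$ with the corresponding estimate, provided $V_{0}\in(\mathbb{H}^{q}(\Omega),\mathcal{D}(\mathbb{A}))_{1-1/p,p}$.

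The hard part will be identifying this real interpolation trace space with the set of $(u_{0},\ell_{0},\omega_{0})\in B^{2(1-1/p)}_{q,p}(\ofo)^{3}\times\rt\times\rt$ obeying \eqref{cc}. Using the identity $(L^{q},W^{2,q})_{1-1/p,p}=B^{2(1-1/p)}_{q,p}$ together with the Besov-trace theorem, the threshold $1/p+1/(2q)=1$ is precisely the critical regularity at which the full Dirichlet trace of $u_{0}$ ceases to be well-defined in $B^{2(1-1/p)}_{q,p}(\ofo)$ and only the normal trace persists; this dichotomy produces the two alternative compatibility sets and explains the exclusion of the borderline case. Once the identification is in hand, the pressure is reconstructed at each time by solving $\Delta\pi=0$ in $\ofo$ with $\partial_{n}\pi=(\nu\Delta u-\partial_{t}u+\tilde f)\cdot n$ on $\partial\ofo$ (the time-dependent analog of the formula in \cref{prop:ev2}), whose unique solvability in $W^{1,q}_{m}(\ofo)$ \cite[Theorem 9.2]{Fab98} transfers the controls on $u,\partial_{t}u,\ell',\omega'$ into the bound on $e^{\eta(\cdot)}\pi$ required in \eqref{est:L}. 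Uniqueness of the quadruple $(u,\pi,\ell,\omega)$ follows from uniqueness of $\tilde V$ in the abstract problem.
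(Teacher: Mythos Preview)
Your argument is correct and shares the paper's overall architecture---subtract $h$ to reduce to the divergence-free case, recast as an abstract Cauchy problem, and invoke \cref{th:ab-i}---but you route it through the operator $\mathbb{A}$ on $\mathbb{H}^{q}(\Omega)$ (via \cref{prop:ev1}), whereas the paper works with the equivalent operator $\mathcal{A}_{FS}$ on $\mathcal{V}^{q}_{n}(\ofo)\times\mathbb{C}^{3}\times\mathbb{C}^{3}$ (via \cref{prop:ev2}). In the paper's formulation the pressure is delivered directly by the last line of \cref{prop:ev2}, and the trace-space membership $(\mpp u_{0},\ell_{0},\omega_{0})\in(\mx,\mathcal{D}(\mathcal{A}_{FS}))_{1-1/p,p}$ is dispatched by a single citation of \cite[Theorem~3.4]{Amann00}; in your formulation the pressure must be rebuilt from a separate Neumann problem and the identification of $(\mathbb{H}^{q}(\Omega),\mathcal{D}(\mathbb{A}))_{1-1/p,p}$ needs one extra step through the rigid extension in $\oso$. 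Both routes are sound; the paper's choice makes the pressure recovery and the compatibility conditions slightly more explicit, while yours stays with the operator $\mathbb{A}$ that is the headline object of \cref{th:main1}. A minor remark: your appeal to \cref{pr:perturb} for the shift $\mathbb{A}\mapsto\mathbb{A}+\eta I$ is harmless but unnecessary, since a bounded additive perturbation preserves $\mathcal{MR}_{p}$ on finite intervals trivially, and the negative type of $e^{t(\mathbb{A}+\eta I)}=e^{\eta t}e^{t\mathbb{A}}$ follows directly from $\eta<\eta_{0}$---which is exactly how the paper handles the case $\eta>0$ in its last sentence.
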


\begin{proof}
We first consider the case $\eta = 0.$ Let us set $v = u - h.$ Then $(v,\pi, \ell, \omega)$ satisfies the following system
\begin{alignat}{2} \label{fsi-l-d1}
&\partial_{t} v - \nu \Delta v + \nabla \pi  =  F, \quad \mathrm{div} \ v =   0  & \quad \mbox{ in }  (0,\infty) \times \ofo, \notag \\
& v = 0  & \mbox{ on } (0,\infty) \times \partial\Omega,\notag  \\
&  v  = \ell  + \omega \times y & \quad \mbox{ on }    (0,\infty) \times \poso, \notag \\
&  m\frac{d}{dt} \ell =  -  \int_{\partial \Omega_{S}(0)}  \sigma(v,\pi) n \ d\gamma  +  G_{1} & \quad t \in (0,\infty), \\
&  J(0)\frac{d}{dt} \omega =  - \int_{\partial \Omega_{S}(0)} y \times   \sigma(v,\pi)n \ d\gamma + G_{2} &  \quad t \in (0,\infty), \notag \\
&  v (0,y) = u_{0}(y) & \mbox{ in }  \Omega_{F}(0),  \notag \\
&  \ell(0) = \ell_{0}, \quad  \omega(0) = \omega_{0}, \notag
\end{alignat}
where
\begin{align*}
F = f -  \partial_{t} h + \nu \Delta h, \quad G_{1} = g_{1}  - \int_{\poso} \varepsilon(h)n \ \dg, \quad  G_{2} = g_{2}  - \int_{\poso} y \times \varepsilon(h)n \ \dg.
\end{align*}
Proceeding as \cref{prop:ev2}, it is easy to see that, the above system is equivalent to
\begin{align} \label{eq:55}
\begin{cases}
\ds \frac{d}{dt} \begin{pmatrix}
\mpp v \\ \ell \\ \omega
\end{pmatrix} = \mathcal{A}_{FS}  \begin{pmatrix}
\mpp v \\ \ell \\ \omega
\end{pmatrix} +  \begin{pmatrix}
\mpp F \\ \widetilde G_{1} \\ \widetilde G_{2}
\end{pmatrix}, \quad  \begin{pmatrix}
\mpp v(0) \\ \ell(0) \\ \omega (0)
\end{pmatrix} = \begin{pmatrix}
\mpp u_{0} \\ \ell_{0} \\ \omega_{0}
\end{pmatrix},   \\
 (I - \mpp) v = (I - \mpp) D(\ell, \omega), 
\end{cases}
\end{align}
where
\begin{align*}
\widetilde G_{1} =  \int_{\poso} N((F -  \nabla \varphi)\cdot n) n \ \dg + G_{1},  \quad \widetilde G_{2} = \int_{\poso} y \times N((F -  \nabla \varphi)\cdot n) n \ d\gamma + G_{2},
\end{align*}
and $\varphi$ is the solution of the problem
\begin{align*}
-\Delta \varphi = \mathrm{div} \ F \mbox{ in } \ofo, \quad \varphi = 0 \mbox{ on } \partial \ofo.
\end{align*}
(see also \cite[Section 4.2]{Raymond-fsi}  or \cite[Proposition 3.7]{MR17}). The operator $\mathcal{A}_{FS}$ is defined as in \eqref{op:afs}. Let us recall that the operator $\mathcal{A}_{FS}$ is an $\mr$-sectorial operator in $\mx$ (\cref{thm:rbd-afs}).  One can easily verify that, under the hypothesis of the theorem, $(\mpp F, \widetilde G_{1}, \widetilde G_{2}) \in L^{p}(0,\infty;\mx)$ and
\begin{multline*}
\|(\mpp F, \widetilde G_{1}, \widetilde G_{2})\|_{L^{p}(0,\infty;\mx)} \leqslant C \Big(  \| f \|_{L^{p}(0,\infty;L^{q}(\ofo))} + \| h\|_{W^{2,1}_{q,p} (Q^{F}_{\infty})^{3}} \\
+ \| g_{1} \|_{L^{p}(0,\infty;\rt)} + \| g_{2} \|_{L^{p}(0,\infty;\rt)}\Big).
\end{multline*}
From \cite[Theorem 3.4]{Amann00}, we obtain $(\mpp u_{0}, \ell_{0}, \omega_{0}) \in  (\mx, \mathcal{D}(\mathcal{A}_{FS}))_{1-1/p,p}.$ Then by \cref{th:ab-i},  the system \eqref{eq:55} admits a unique solution
$(\mpp v, \ell, \omega) \in L^{p}(0,\infty;\mathcal{D}(\mathcal{A}_{FS})) \cap W^{1,p}(0,\infty;\mx).$  From the expression of $(I-\mpp)v$ together with \cref{lem:eqiv-norm}, one can easily check that $v \in W^{2,1}_{q,p}(Q_{\infty}^{F})^{3}$ and thus  $u \in W^{2,1}_{q,p}(Q_{\infty}^{F})^{3}.$ The estimate \eqref{est:L} is easy to obtain.

The case $\eta > 0$ can be reduced to the previous case by multiplying all the function by $e^{\eta t}$ and using the fact that $\mathcal{A}_{FS} + \eta$ generates an $C^{0}$-semigroup of negative type for all $\eta \in (0, \eta_{0}).$
\end{proof}

\section{Global in time existence and uniqueness} \label{sec:gr}
In this section we are going to prove \cref{th-g}. As the domain of the fluid equation for the full nonlinear problem is also a unknown of the problem, we first rewrite the system in a fixed spatial domain.
\subsection{Change of variables}
We describe a change of variable to rewrite the system \eqref{eq:mainsys} in a fixed spatial domain. We follow the approach of \cite{Cum08}. Let us assume that \eqref{eq:ini-d} is satisfied and we also assume
\begin{align} \label{eq:c1}
 \|a\|_{L^{\infty}(0,\infty;\rt)} + \|Q - I_{3}\|_{L^{\infty}(0,\infty;\mathbb{R}^{3 \times 3})}  \mathrm{diam} (\oso) \leqslant \frac{\alpha}{2}.
\end{align}
With the above choice  we have $\mathrm{dist} \left(\ost,\partial\Omega\right) \geqslant \alpha/2  $ for all $t \in [0,\infty).$ We consider a cut-off function $\psi$ which satisfies
\begin{align}
\psi \in C^{\infty}(\overline \Omega), \quad \psi = 1 \mbox{ if } \mathrm{dist}(x,\partial \Omega) > \alpha/4, \quad \psi = 0 \mbox{ if } \mathrm{dist}(x,\partial \Omega) < \alpha/8.
\end{align}
We introduce a function $\xi$ defined in $(0,\infty) \times \Omega$ by
\begin{align*}
\xi(t,x) = a'(t) + (x - a(t)) + \frac{|x-a(t)|^{2}}{2} \omega(t)
\end{align*}
and $\Lambda$  in $(0,\infty) \times \Omega$ by
\begin{align*}
\Lambda(t,x) =
\psi(x) \; \left(a'(t) + \omega(t) \times (x -a(t)\right) +
\begin{pmatrix}
\ds \frac{\partial \psi}{\partial x_{2}}(x) \xi_{3}(t,x) - \frac{\partial \psi}{\partial x_{3}}(x) \xi_{2}(t,x) \\
\ds \frac{\partial \psi}{\partial x_{3}}(x) \xi_{1} (t,x)- \frac{\partial \psi}{\partial x_{1}}(x) \xi_{3} (t,x)\\
\ds \frac{\partial \psi}{\partial x_{1}}(x) \xi_{2}(t,x) - \frac{\partial \psi}{\partial x_{2}}(x) \xi_{1}(t,x)
\end{pmatrix}.
\end{align*}
With the above definitions, it is easy to see that $\Lambda$ satisfies the following lemma
\begin{lemma}
Let us assume that $a \in W^{2,p}(0,\infty)$ and $\omega \in W^{1,p}(0,\infty).$ Let $\Lambda$ be defined as above. Then we have
\begin{itemize}
\item $\Lambda(t,x) = 0$ for all $t \in [0,\infty)$ and for all $x$ such that $\mathrm{dist}(x,\partial \Omega) < \alpha/8.$
\item $\mathrm{div} \Lambda(t,x) = 0$ for all $t \in [0,\infty)$ and $x \in \Omega.$
\item $\Lambda(t,x) = a'(t) + \omega(t) \times (x - a(t)) $ for all $t \in [0,\infty)$ and $x \in \ost.$
\item $\Lambda \in C([0,\infty) \times \Omega;\rt).$ Moreover, for all $t\in [0,\infty),$  $\Lambda(t,\cdot)$ is a $C^{\infty}$ function for all $x \in \Omega,$ the function $\Lambda(\cdot,x) \in W^{1,p}(0,\infty;\rt).$ 
\end{itemize}
\end{lemma}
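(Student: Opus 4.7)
The plan is to verify the four bullets in turn, exploiting the explicit formula for $\Lambda$, the support properties of $\psi$, and the stated regularity of $a$ and $\omega$. The main technical point is the divergence-free identity, which reduces to a single vector-calculus computation of $\mathrm{curl}\,\xi$.

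The first bullet is immediate: when $\mathrm{dist}(x,\partial\Omega)<\alpha/8$ the cut-off $\psi$ vanishes on a whole open neighborhood of $x$, so all the first-order partials $\partial_{x_j}\psi(x)$ vanish there as well. Since every summand in the defining formula for $\Lambda(t,x)$ carries either $\psi(x)$ or some $\partial_{x_j}\psi(x)$ as a factor, we get $\Lambda(t,x)=0$. The third bullet is almost as direct: the running assumption \eqref{eq:c1} combined with \eqref{eq:ini-d} guarantees $\mathrm{dist}(x,\partial\Omega)\geqslant\alpha/2>\alpha/4$ for every $x\in\overline{\ost}$ and every $t\geqslant 0$. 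Hence $\psi\equiv 1$ and $\nabla\psi\equiv 0$ on an open neighborhood of $\overline{\ost}$, and only the first summand of $\Lambda$ survives there, yielding $\Lambda(t,x)=a'(t)+\omega(t)\times(x-a(t))$.

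For the divergence-free property, I would rewrite $\Lambda=\psi(x)\,v(t,x)+\nabla\psi(x)\times\xi(t,x)$, where $v(t,x):=a'(t)+\omega(t)\times(x-a(t))$ is the rigid velocity. Since $v$ is affine in $x$ with skew-symmetric linear part, $\mathrm{div}_x v=0$, so $\mathrm{div}(\psi v)=\nabla\psi\cdot v$. The standard identity $\mathrm{div}(\nabla\psi\times\xi)=\xi\cdot\mathrm{curl}(\nabla\psi)-\nabla\psi\cdot\mathrm{curl}\,\xi=-\nabla\psi\cdot\mathrm{curl}\,\xi$ reduces the matter to checking that $\mathrm{curl}_x\,\xi = v$. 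This is the point where the precise algebraic form of $\xi$ is used: the constant-in-$x$ contribution combines with the linear-in-$x$ contribution to produce $a'(t)$ under a curl (using that $\mathrm{curl}(w\times x)=2w$ for constant $w$), while $\mathrm{curl}\bigl(\tfrac12|x-a|^2\omega\bigr)=(x-a)\times\omega=-\omega\times(x-a)$; taken together, the two summands of $\mathrm{div}\,\Lambda$ cancel identically. This is the only place where a real computation is needed, and I expect it to be the main (mild) obstacle.

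Finally, the regularity statement follows from the separation-of-variables structure of the formula: $\Lambda$ is a finite linear combination of products of $\psi$ and its smooth $x$-derivatives (multiplied by polynomials in $x-a(t)$) with the time-dependent quantities $a'(t)$ and $\omega(t)$. Smoothness of $\Lambda(t,\cdot)$ on $\Omega$ is then inherited from $\psi\in C^\infty(\overline\Omega)$. Joint continuity in $(t,x)$ uses the Sobolev embeddings $W^{2,p}(0,\infty)\hookrightarrow C([0,\infty);\rt)$ for $a$ and $W^{1,p}(0,\infty)\hookrightarrow C([0,\infty);\rt)$ for $\omega$ (together with the continuity of $a'$, which holds since $a'\in W^{1,p}$). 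For fixed $x\in\Omega$, $\Lambda(\cdot,x)$ is an algebraic expression in $a(t)$, $a'(t)$ and $\omega(t)$ with coefficients bounded by powers of $\mathrm{diam}(\Omega)$ (using \eqref{eq:c1} to bound $|x-a(t)|$), so differentiating once in $t$ produces a linear combination of $a'$, $a''$, $\omega$ and $\omega'$ with bounded coefficients, giving $\Lambda(\cdot,x)\in W^{1,p}(0,\infty;\rt)$ as claimed.
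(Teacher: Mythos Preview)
The paper itself gives no proof of this lemma (it is prefaced by ``it is easy to see'' and the construction is taken from \cite{Cum08}), so there is nothing to compare against; your overall strategy is the standard one, and bullets one, three and four are handled correctly.

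There is, however, a genuine gap in your treatment of the second bullet. You correctly rewrite $\Lambda=\psi\,v+\nabla\psi\times\xi$ and reduce $\mathrm{div}\,\Lambda=0$ to the identity $\mathrm{curl}_x\xi=v$, but your verification of that identity does not go through with the $\xi$ actually displayed in the paper. As written there, $\xi=a'(t)+(x-a(t))+\tfrac12|x-a(t)|^{2}\omega(t)$: the first two summands are a constant and a gradient, so both have zero curl, and the formula $\mathrm{curl}(w\times x)=2w$ you invoke is inapplicable since no cross product appears in these terms. Your own computation of the third summand gives $-\omega\times(x-a)$, so altogether $\mathrm{curl}\,\xi=-\omega\times(x-a)\neq v$ and the two contributions to $\mathrm{div}\,\Lambda$ do \emph{not} cancel. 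The resolution is that the displayed $\xi$ is evidently misprinted: the whole point of the construction (cf.\ \cite{Cum08}) is that $\xi$ be a vector potential of the rigid velocity, i.e.\ $\mathrm{curl}\,\xi=v$, so that in fact $\Lambda=\mathrm{curl}(\psi\xi)$ and $\mathrm{div}\,\Lambda=0$ is immediate. One checks directly that $\xi=\tfrac12\,a'(t)\times(x-a(t))-\tfrac12|x-a(t)|^{2}\omega(t)$ does the job. You should flag the misprint explicitly and carry out the computation with the corrected $\xi$ rather than asserting a cancellation that, with the formula as printed, is false.
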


Next we consider the characteristic $X$ associated to the flow $\Lambda,$ that is the solution of the Cauchy problem
\begin{align} \label{def:X}
\partial_{t}  X(t,y) & =  \Lambda(t,X(t,y))  \quad (t >0), \notag \\
X(0,y)               &  = y\in \overline{\Omega}.
\end{align}
We have the following lemma
\begin{lemma} \label{lem:jd}
For all $y \in \Omega,$ the initial value problem $\eqref{def:X}$ admits a unique solution $X(\cdot,y) : [0,\infty) \mapsto \rt,$ which is a $C^{1}$ function in $[0,\infty).$  Furthermore $X$ satisfies the following properties
\begin{itemize}
\item For any $t \in [0,\infty),$ $X(t,\cdot)$ is a  $C^{1}$- diffeomorphism  from $\Omega$ onto $\Omega$ and $\ofo$ onto $\oft.$
\item For all $y \in \Omega$ and $t \in [0,\infty),$ we have
\[ \mathrm{det} \ \nabla X(t,\cdot) = 1.\]
\item  For each $t \geqslant 0,$  we denote by $Y(t,\cdot) = [X(t,\cdot)]^{-1}$ the
inverse of $X(t,\cdot)$.
\end{itemize}
\end{lemma}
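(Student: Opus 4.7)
The plan is to invoke standard Carathéodory-type ODE theory applied to the vector field $\Lambda$ constructed above. By the previous lemma, $\Lambda$ is continuous on $[0,\infty)\times\overline{\Omega}$, smooth in the spatial variable for each fixed $t$ (with all spatial derivatives controlled by $|a'|+|\omega|$ through the cut-off structure), and $\Lambda(\cdot,x)\in W^{1,p}(0,\infty;\rt)$ for each fixed $x$. In particular, on every slab $[0,T]\times\overline{\Omega}$, $\Lambda$ is measurable in $t$, continuous in $x$, and satisfies a Lipschitz estimate $|\Lambda(t,x_1)-\Lambda(t,x_2)|\leqslant L(t)|x_1-x_2|$ with $L\in L^p_{\rm loc}(0,\infty)$. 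The Carathéodory--Picard--Lindel\"of theorem then yields a unique absolutely continuous local solution $X(\cdot,y)$ to \eqref{def:X}. Since $\Lambda(t,x)=0$ whenever $\mathrm{dist}(x,\partial\Omega)<\alpha/8$, the sub-level set $\{\mathrm{dist}(\cdot,\partial\Omega)\geqslant \alpha/8\}$ (hence $\overline{\Omega}$) is positively invariant, so the solution does not blow up or exit and extends to $[0,\infty)$. Continuity of $\Lambda$ in $(t,x)$ promotes $t\mapsto\Lambda(t,X(t,y))$ to a continuous function, hence $X(\cdot,y)\in C^1([0,\infty);\rt)$; smoothness in $y$ follows from smooth dependence on initial data, using that $\Lambda$ is $C^\infty$ in $x$ with derivatives locally integrable in $t$.

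To show that $X(t,\cdot)$ is a $C^1$-diffeomorphism from $\Omega$ onto $\Omega$, I would introduce the backward flow: for each fixed $s\geqslant 0$, solve $\partial_\tau Z(\tau,x)=-\Lambda(s-\tau,Z(\tau,x))$ with $Z(0,x)=x$, and set $Y(s,\cdot):=Z(s,\cdot)$. By uniqueness of solutions to \eqref{def:X}, the compositions $X(t,\cdot)\circ Y(t,\cdot)$ and $Y(t,\cdot)\circ X(t,\cdot)$ are the identity, so $X(t,\cdot)$ is a $C^1$-bijection of $\Omega$. Because $\Lambda(t,x)=0$ near $\partial\Omega$, trajectories starting there remain fixed, ensuring $X(t,\Omega)=\Omega$. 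For the fluid domain, observe that the ansatz $\widetilde X(t,y):=a(t)+Q(t)y$ satisfies, for $y\in\oso$,
\[
\partial_t \widetilde X(t,y)=a'(t)+\dot Q(t)y=a'(t)+\omega(t)\times(\widetilde X(t,y)-a(t))=\Lambda(t,\widetilde X(t,y)),
\]
the last equality because $\widetilde X(t,y)\in\ost$ and $\Lambda$ coincides with the rigid velocity on $\ost$ by the preceding lemma. By uniqueness, $X(t,y)=a(t)+Q(t)y$ on $\oso$, so $X(t,\oso)=\ost$ and consequently $X(t,\ofo)=\oft$.

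For the Jacobian identity, I would apply Liouville's formula: the matrix $M(t):=\nabla_y X(t,y)$ satisfies the linear ODE $\dot M=(\nabla_x\Lambda)(t,X(t,y))M$, $M(0)=I_3$, hence
\[
\frac{d}{dt}\det M(t)=\mathrm{tr}\bigl((\nabla_x\Lambda)(t,X(t,y))\bigr)\det M(t)=(\mathrm{div}\,\Lambda)(t,X(t,y))\det M(t)=0,
\]
since $\mathrm{div}\,\Lambda\equiv 0$ by the previous lemma. Combined with $\det M(0)=1$, this gives $\det\nabla X(t,\cdot)=1$ everywhere, and makes the $C^1$-inverse explicit via the inverse function theorem.

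The main technical subtlety will be the limited time regularity of $\Lambda$: it is only $W^{1,p}$ in time rather than $C^1$, so the usual smooth-ODE arguments need to be upgraded to their Carathéodory versions. However, since $\Lambda$ is jointly continuous in $(t,x)$ with Lipschitz constant in $x$ that is $L^p_{\rm loc}$ in $t$, all standard flow-theory statements (existence, uniqueness, continuity and differentiability in initial data, the Liouville formula) transfer verbatim; the cut-off structure near $\partial\Omega$ handles invariance of the ambient domain, and the rigid-motion identification on $\oso$ yields the explicit transport of the solid.
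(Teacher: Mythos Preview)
Your argument is correct and essentially self-contained: Carath\'eodory/Picard--Lindel\"of for existence and uniqueness, the vanishing of $\Lambda$ near $\partial\Omega$ for invariance of the domain, the backward flow for invertibility, the explicit rigid motion $a(t)+Q(t)y$ on $\oso$ for the identification $X(t,\oso)=\ost$, and Liouville's formula together with $\mathrm{div}\,\Lambda=0$ for the Jacobian. The paper itself does not give a proof at all; it simply refers the reader to \cite[Lemma~2.2]{Cum08}. Your write-up is therefore more informative than what the paper provides, and the route you take is exactly the standard one underlying that reference. One minor remark: since the preceding lemma already asserts $\Lambda\in C([0,\infty)\times\Omega;\rt)$, the classical Picard--Lindel\"of theorem (with a time-dependent but continuous Lipschitz constant) suffices, so the Carath\'eodory framework, while correct, is slightly more machinery than needed.
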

\begin{proof}
See \cite[Lemma 2.2]{Cum08}.
\end{proof}

 We consider the following change of
variables
\begin{gather}
 \widetilde  u (t,y)   = Q^{-1}(t)u(t,X(t,y)), \qquad \widetilde  \pi(t,y)  = \pi(t,X(t,y)) \label{lpq00} \\
\widetilde  \ell(t)   = Q^{-1}(t) \dot a(t), \qquad \widetilde  \omega(t)  = Q^{-1}(t) \omega(t), \label{lpq03}
\end{gather}
for $(t,y) \in (0,\infty) \times \ofo$. %and where $\bar \rho > 0 $ and $\bar \theta > 0$.

 Then $(\widetilde u, \widetilde p, \widetilde \ell, \widetilde \omega)$ satisfies the following system
\begin{alignat}{2} \label{eq:NL-1}
&\partial_{t} \widetilde u - \nu \Delta \widetilde u + \nabla \widetilde p  = \mathcal{F}, \quad \mathrm{div} \ u = \mathrm{div} \ \mathcal{H}, &  \quad t \in  (0,\infty), \ y \in \ofo, \notag \\
& \widetilde u = 0,  & t \in (0,\infty),  y \in  \partial\Omega,\notag  \\
&  \widetilde u  = \widetilde \ell  + \widetilde \omega \times y, & \quad t \in   (0,\infty), \ y \in \poso, \notag \\
&  m\widetilde {\ell}' =  -  \int_{\partial \Omega_{S}(0)}  \sigma(\widetilde u,\widetilde p) n \ d\gamma  +  \mathcal{G}_{1}, & \quad t \in (0,\infty), \\
&  J(0) \widetilde\omega '=  - \int_{\partial \Omega_{S}(0)} y \times   \sigma(\widetilde u,\widetilde p)n \ d\gamma + \mathcal{G}_{2}, &  \quad t \in (0,\infty), \notag \\
&  u (0,y) = u_{0}(y) & y \in  \Omega_{F}(0),  \notag \\
&  \ell(0) = \ell_{0},  \quad  \omega(0) = \omega_{0},  \notag
\end{alignat}
where
\begin{align} \label{def-Q-g}
\dot Q  =  Q A(\tilde \omega), \quad Q(0) = I_{3},
\end{align}
is the rotation matrix of the solid at instant $t$,
\begin{align} \label{Jacobi-g}
X(t,y)  = y + \int_{0}^{t} \Lambda(s,X(s,y))  \ {\rm d}s, \quad \mbox{ and } \quad \nabla Y(t,X(t,y)) = [\nabla X]^{-1}(t,y),
\end{align}
for every $y\in \Omega_F(0)$ and $t\geqslant 0$. Using  the notation
\begin{align} \label{Z-g}
Z(t,y) = \left( Z_{i,j}\right)_{1\leqslant i, j\leqslant 3}=[\nabla X]^{-1}(t,y) \qquad\qquad(t\geqslant 0,\ y\in \Omega_F(0)),
\end{align}
the remaining terms in \eqref{eq:NL-1} are defined by
\begin{align} \label{eq:F}
\mathcal{F}_{i}(\widetilde u, \widetilde \pi, \widetilde \ell, \widetilde \omega) & = - [(Q-I_{3}) \partial_{t} \widetilde u]_{i} - (\omega \times Q\widetilde u)_{i}  + \partial_{t} X  \cdot Z^{T} \nabla (Q\widetilde u)_{i} - \widetilde u  \cdot Z^{T} \nabla (Q\widetilde u)_{i} \\
& + \nu\sum_{l,j,k} \frac{\partial^2 (Q\tilde u)_{i} }{\partial y_l\partial y_k}  \left(Z_{k,j}  - \delta_{k,j}\right) Z_{l,j}  + \nu\sum_{l,k} \frac{\partial^2 (Q\tilde u)_{i} }{\partial y_l\partial y_k} \left( Z_{l,k} - \delta_{l,k} \right)
\notag \\
& + \nu \left[ (Q - I) \Delta \tilde u\right]_{i} + \nu \sum_{l,j,k} Z_{l,j} \frac{\partial  (Q \tilde u)_i}{\partial y_k} \frac{\partial Z_{k,j}}{\partial y_{l}} - \left( (Z^{\top} - I_{3}) \nabla\widetilde p\right)_{i}, \notag
\end{align}
\begin{align} \label{eq:H}
\mathcal{H}(\widetilde u, \widetilde \pi, \widetilde \ell, \widetilde \omega) = (I_{3} - [ZQ]^{T}) \widetilde u,
\end{align}
\begin{align} \label{eq:G}
\mathcal{G}_{1}( \widetilde \ell, \widetilde \omega) = - m (\widetilde \omega \times \widetilde \ell), \qquad \qquad \mathcal{G}_{2}( \widetilde \ell, \widetilde \omega) = J(0) \widetilde \omega \times \widetilde \omega.
\end{align}

\subsection{Estimate of nonlinear terms}
In this section, we are going to estimate the nonlinear terms $\mathcal{F},\mathcal{H},\mathcal{G}_{1}$ and $\mathcal{G}_{2}$ defined as in \eqref{eq:F} - \eqref{eq:G}.

\medskip

Throughout this section we assume $1 < p,q < \infty$ satisfying the conditions $\ds \frac{1}{p} + \frac{1}{2q} \neq 1$ and $\ds \frac{1}{p} + \frac{3}{2q} \leqslant \frac{3}{2}.$ Let $p'$
denote the conjugate of $p$, i.e.,
$\ds\frac{1}{p} + \frac{1}{p'} = 1$.  Let us fix  $\eta \in (0, \eta_{0})$, where $\eta_{0}$ is the constant introduced in  \cref{th:exp-st} and  we introduce the following ball
\begin{align*}
{\mathcal S}_{\gamma} = \Big\{(\widetilde u,\widetilde \pi,\widetilde \ell,\widetilde \omega) \mid \widetilde \rho(t,y) =
\left\| (\widetilde u, \widetilde \pi,\widetilde \ell,\widetilde \omega)\right\|_{\mathcal{S}} \leqslant \gamma
 \Big\},
\end{align*}
where
\begin{multline} \label{ball-g}
\left\| (\widetilde u,\widetilde p,\widetilde \ell,\widetilde \omega)\right\|_{\mathcal{S}}:= \|e^{\eta (\cdot)}\widetilde u \|_{L^{p}(0,\infty;W^{2,q}(\ofo))^{3}} + \|e^{\eta (\cdot)}\widetilde u \|_{W^{1,p}(0,\infty;L^{q}(\ofo))^{3}} \\ +
\|e^{\eta (\cdot)}\widetilde p \|_{L^{p}(0,\infty;W^{1,q}(\ofo))}
+ \|e^{\eta (\cdot)}\widetilde \ell \|_{W^{1,p}(0,\infty;\rt)} + \|e^{\eta (\cdot)}\widetilde\omega \|_{W^{1,p}(0,\infty;\rt)}.
\end{multline}

Our aim is to estimate the nonlinear terms  in  \eqref{eq:F} - \eqref{eq:G}.
\begin{proposition} \label{prop-nle}
Let us assume $1 < p,q < \infty$ satisfying the condition  $\ds \frac{1}{p} + \frac{3}{2q} \leqslant \frac{3}{2}.$ There exist constants  $\gamma_{0} \in (0,1)$ and $C_{N} > 0$ both depending only on $p,q,\eta$ and $\Omega_{F}(0)$ such that for every $\gamma \in (0, \gamma_{0})$ and for every  $(\widetilde u,\widetilde \pi,\widetilde \ell,\widetilde \omega)  \in {\mathcal S}_{\gamma}$, we have
\begin{multline} \label{e-nl}
\|e^{\eta (\cdot)}\mathcal{F}\|_{L^{p}(0,\infty;L^{q}(\ofo))} + \|e^{\eta (\cdot)}\mathcal{H}\|_{W^{2,1}_{q,p}(Q_{F}^{\infty})} \\ + \|e^{\eta (\cdot)}\mathcal {G}_{1}\|_{L^{p}(0,\infty;\rt)} + \|e^{\eta (\cdot)}\mathcal {G}_{2}\|_{L^{p}(0,\infty;\rt)} \leqslant  C_{N} \gamma^{2}.
\end{multline}
\end{proposition}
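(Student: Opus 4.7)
The plan is to estimate each term of $\mathcal{F}, \mathcal{H}, \mathcal{G}_1, \mathcal{G}_2$ separately, exploiting the fact that every nonlinear term decomposes as a product of a ``geometric'' factor built from $Q, X, Z$ (or their derivatives) and a ``fluid'' factor involving $\widetilde u, \widetilde \pi$. Because $\widetilde \ell, \widetilde \omega \in e^{-\eta (\cdot)}W^{1,p}(0,\infty; \rt)$, the geometric factors vanish at $\widetilde\ell = \widetilde\omega = 0$ and can be made small in $\gamma$, while the fluid factors are controlled in the $L^p$--$L^q$ norms defining $\|\cdot\|_{\mathcal{S}}$; each product is then quadratic in $\gamma$.

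The first step consists of a block of pointwise estimates on the geometric quantities. From the one-dimensional Sobolev embedding applied to $e^{\eta(\cdot)}\widetilde\omega \in W^{1,p}(0,\infty)$ and the Hölder inequality
\[
\|\widetilde\omega\|_{L^\infty(0,\infty)} + \|\widetilde\omega\|_{L^1(0,\infty)} \leqslant C\|e^{-\eta t}\|_{L^{p'}(\mathbb{R}^+)} \|e^{\eta (\cdot)}\widetilde\omega\|_{W^{1,p}(0,\infty)} \leqslant C\gamma,
\]
and similarly for $\widetilde\ell$. A Gronwall argument applied to \eqref{def-Q-g} and to \eqref{Jacobi-g}, together with differentiation of the latter in $y$ up to second order, then yields
\[
\|Q - I_3\|_{L^\infty(0,\infty)} + \|\nabla_y X - I_3\|_{L^\infty(0,\infty;W^{1,\infty}(\Omega))} + \|Z - I_3\|_{L^\infty(0,\infty;W^{1,\infty}(\Omega))} \leqslant C\gamma,
\]
together with $\|e^{\eta (\cdot)}\partial_t X\|_{L^p(0,\infty;W^{1,\infty}(\Omega))} \leqslant C\gamma$ and corresponding estimates for $\partial_t Z$, $\partial_t Q$. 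Taking $\gamma_0$ sufficiently small guarantees the geometric condition \eqref{eq:c1} as well as the invertibility of $\nabla X$, so all the quantities appearing in \eqref{eq:F}--\eqref{eq:G} are well defined.

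The second step is the actual estimation. The terms $\mathcal{G}_1, \mathcal{G}_2$ are immediate from $W^{1,p}_\eta \hookrightarrow L^\infty$, for instance
\[
\|e^{\eta (\cdot)}(\widetilde\omega \times \widetilde\ell)\|_{L^p(0,\infty;\rt)} \leqslant \|\widetilde\omega\|_{L^\infty} \|e^{\eta (\cdot)}\widetilde\ell\|_{L^p} \leqslant C\gamma^2.
\]
For a typical term of $\mathcal{F}$ involving the highest-order fluid quantity one uses
\[
\|e^{\eta (\cdot)}(Q - I_3)\partial_t \widetilde u\|_{L^p(L^q)} \leqslant \|Q - I_3\|_{L^\infty L^\infty}\,\|e^{\eta (\cdot)}\partial_t \widetilde u\|_{L^p(L^q)} \leqslant C\gamma^2,
\]
and analogous pairings handle the Laplacian remainders $(Z_{k,j} - \delta_{k,j}) Z_{l,j}\partial^2 \widetilde u$, the lower order term $\partial_t X \cdot Z^{\top}\nabla(Q\widetilde u)$, and the pressure correction $(Z^{\top} - I_3)\nabla \widetilde\pi$. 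The genuinely nonlinear convective terms $\widetilde u \cdot Z^{\top}\nabla(Q\widetilde u)$ and $\omega \times Q\widetilde u$ require a Hölder splitting in space combined with the trace embedding $W^{2,1}_{q,p}(Q^F_\infty) \hookrightarrow L^\infty(0,\infty; B^{2(1-1/p)}_{q,p}(\ofo))$ and Sobolev embeddings of the Besov space into $L^{r}(\ofo)$ for an appropriate $r$; this is precisely where the assumption $\frac{1}{p} + \frac{3}{2q} \leqslant \frac{3}{2}$ enters, as it selects admissible exponents in the Hölder pairing.

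The main obstacle is the divergence term $\mathcal{H} = (I_3 - [ZQ]^{\top})\widetilde u$, which must be controlled in $W^{2,1}_{q,p}(Q^F_\infty)$, i.e.\ with two spatial derivatives and one time derivative. Expanding $\partial_t\mathcal{H}$ and $\nabla_y^2\mathcal{H}$ by Leibniz produces sums of products in which exactly one factor carries the top regularity; each term is then estimated by placing the geometric factor in $L^\infty$ (using Step~1) and the remaining fluid factor in the norm it naturally belongs to. The condition $\mathcal{H}(0) = 0$ required to invoke \cref{lplq-L} follows from $Z(0) Q(0) = I_3$, which in turn follows from $X(0, \cdot) = \mathrm{id}$ and $Q(0) = I_3$; the vanishing of $\mathcal{H}|_{\partial\ofo}$ follows from the support property of $\Lambda$. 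Summing all contributions, absorbing powers higher than two in $\gamma < \gamma_0 < 1$, gives \eqref{e-nl}; the most delicate bookkeeping is in Step three for $\mathcal{H}$, where one must verify that every Leibniz term admits a splitting compatible with the borderline embedding dictated by $\frac{1}{p} + \frac{3}{2q} \leqslant \frac{3}{2}$.
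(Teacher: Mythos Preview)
Your approach is correct and mirrors the paper's: first establish $O(\gamma)$ bounds on the geometric quantities $Q-I_3$, $\nabla X-I_3$, $Z-I_3$, $\partial_t X$, $\partial_t Z$ via Gronwall and the exponential weight, then pair each nonlinear term as a small geometric factor times a fluid factor controlled by $\|e^{\eta(\cdot)}\widetilde u\|_{W^{2,1}_{q,p}}$.

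Two remarks on points where you diverge from the paper. First, for the convective term $\widetilde u\cdot Z^{\top}\nabla(Q\widetilde u)$ the paper does not use the trace embedding into $L^\infty(0,\infty;B^{2(1-1/p)}_{q,p})$; it instead invokes the mixed space--time embeddings $W^{2,1}_{q,p}\hookrightarrow L^{3p}(0,\infty;L^{3q})$ and $W^{2,1}_{q,p}\hookrightarrow L^{3p/2}(0,\infty;W^{1,3q/2})$ (from \cite[Proposition~4.3]{Gei13}) together with the H\"older splitting $\tfrac1p=\tfrac1{3p}+\tfrac2{3p}$, $\tfrac1q=\tfrac1{3q}+\tfrac2{3q}$. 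Your Besov-trace route also works, but the paper's choice makes the role of the hypothesis $\tfrac1p+\tfrac3{2q}\leqslant\tfrac32$ explicit in one line. Second, your stated regularity $\nabla_y X-I_3\in L^\infty(0,\infty;W^{1,\infty})$ is one order short: to control $\nabla_y^2\mathcal{H}$ by Leibniz you need $\nabla_y^2 Z$ bounded, hence $Z-I_3\in L^\infty(0,\infty;C^2(\Omega))$, which requires differentiating the flow equation three times in $y$ (this is what the paper does). Once that is in place, note that $\mathcal{H}$ does \emph{not} need the borderline condition $\tfrac1p+\tfrac3{2q}\leqslant\tfrac32$ at all: the paper bounds $\|e^{\eta(\cdot)}\mathcal{H}\|_{W^{2,1}_{q,p}}$ directly by $(\|ZQ-I_3\|_{L^\infty(C^2)}+\|\partial_t Z\|_{L^\infty}+\|\partial_t Q\|_{L^\infty})\|e^{\eta(\cdot)}\widetilde u\|_{W^{2,1}_{q,p}}\leqslant C\gamma^2$, so your last sentence overstates the difficulty there.
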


\begin{proof}
The constants appearing in this proof will be denoted by $C$ and depends only on $p,q,\eta$ and $\Omega_{F}(0).$  Let us first show that,  there exists $\gamma_{0} \in (0,1),$ such that, for every $\gamma \in (0, \gamma_{0})$ and for every  $(\widetilde u,\widetilde \pi,\widetilde \ell,\widetilde \omega)  \in {\mathcal S}_{\gamma}$ the condition \eqref{eq:c1} is verified.

The solution of \eqref{def-Q-g} satisfies $Q \in SO(3)$ and thus $|Q(t)| = 1$ for all $t\geqslant 0.$
We can rewrite $Q$ as follows
\begin{equation*}
Q(t) = I + \int_{0}^{t} e^{-\eta s} e^{\eta s} \widetilde \omega(s) \times Q(s)  \  ds.
\end{equation*}
Therefore
\begin{align} \label{q1}
\|Q - I_{3}\|_{L^{\infty}(0,\infty;\mathbb{R}^{3\times 3})} &\leqslant  \int_{0}^{\infty} e^{-\eta s} e^{\eta s} |\widetilde \omega(s)| \ ds \notag \\
& \leqslant \left(\int_{0}^{\infty} e^{-p'\eta t } \ dt \right)^{1/p'} \|e^{\eta (\cdot)}\widetilde\omega \|_{L^{p}(0,\infty;\rt)} \leqslant \left( \frac{1}{p'\eta}\right)^{1/p'} \gamma.
\end{align}
Similarly,
\begin{align} \label{a1}
\|a\|_{L^{\infty}(0,\infty;\rt)} \leqslant \int_{0}^{\infty}  e^{-\eta t} e^{\eta t} |Q(s)| |\widetilde \ell(t)| \ dt \leqslant \left( \frac{1}{p'\eta}\right)^{1/p'} \gamma.
\end{align}
Combining \eqref{q1} and \eqref{a1}, we get 
\begin{equation*}
\|a\|_{L^{\infty}(0,\infty;\rt)} + \|Q - I_{3}\|_{L^{\infty}(0,\infty;\mathbb{R}^{3 \times 3})}  \mathrm{diam} (\oso)  \leqslant \gamma \left( \frac{1}{p'\eta}\right)^{1/p'} (1 + \mathrm{diam} (\oso)).
\end{equation*}
Let us set
\begin{align} \label{gam0}
\gamma_{0} = \min\left\{1,   \frac{\alpha}{2C_{p,\eta} (1+ \mathrm{diam}(\ofo))}\right\}, \quad \mbox{ with } C_{p,\eta} = \left( \frac{1}{p'\eta}\right)^{1/p'}.
\end{align}
 With the above choice of $\gamma_{0},$ we can easily verify  the condition \eqref{eq:c1}.

Let $X$ be defined as in \eqref{Jacobi-g}. Differentiating \eqref{Jacobi-g} with respect to $y$ we obtain
\begin{align*}
\nabla X (t,y) = I_{3} +  \int_{0}^{t} \nabla \Lambda(s,X(s,y)) \nabla X(s,y) \ ds
\end{align*}

From the definition of $\Lambda$ and $X$ we obtain
\begin{align*}
&\|\nabla X(t,\cdot)\|_{C^{2}(\Omega)} \\
&\leqslant 1 + C \int_{0}^{t} e^{-\eta s} e^{\eta s} \left(|\widetilde \omega(s)+ |\widetilde \ell(s)|\right)  \|\nabla X(s,\cdot)\|_{C^{\infty}(\Omega)} \ ds \\
&\leqslant 1 + C \left( \|e^{\eta (\cdot)}\widetilde \ell \|_{L^{\infty}(0,\infty;\rt)} + \|e^{\eta (\cdot)}\widetilde\omega \|_{L^{\infty}(0,\infty;\rt)}\right) \int_{0}^{t} e^{-\eta s}  \|\nabla X(s,\cdot)\|_{C^{2}(\Omega)}  \ ds \\
& \leqslant 1 + C \int_{0}^{t}e^{-\eta s}  \|\nabla X(s,\cdot)\|_{C^{2}(\Omega)} \ ds,
\end{align*}
 By Gronwall's inequality
\begin{align*}
\|\nabla X(t,\cdot)\|_{C^{2}(\Omega)} \leqslant \mathrm{exp} \left(C  \int_{0}^{t} e^{-\eta s} \ ds  \right)  \leqslant e^{C/\eta} \mbox{ for all } t \in (0,\infty).
\end{align*}
With the above estimate  we obtain
\begin{align} \label{j3}
\|\nabla X(t,\cdot) - I_{3}\|_{L^{\infty}(0,\infty;C^{2}(\Omega))} \leqslant C \int_{0}^{\infty} e^{-\eta s} e^{\eta s} \left(\|\widetilde \omega(s)\|_{\rt}+ \|\widetilde \ell(s)\|_{\rt}\right)  \ ds \leqslant C \gamma.
\end{align}
It is also easy to see that
\begin{align*}
\|\mathrm{Cof} \nabla X \|_{L^{\infty}(0,\infty;C^{2}(\Omega))} \leqslant C.
\end{align*}
From \cref{lem:jd}, we have $\mathrm{det} \nabla X(t,y) = 1$ for all $t \geqslant 0$ and $y \in \overline{\Omega}.$ 
Thus from the relation
\[ \ds Z = [\nabla X]^{-1} =  \frac{1}{\mathrm{det} \nabla X} \mathrm{Cof} \nabla X, \]
we obtain
\begin{align} \label{z12}
\|Z\|_{L^{\infty}(0,\infty;C^{2}(\Omega))} \leqslant C.
\end{align}
Using the above estimate and \eqref{j3}, we get
\begin{align} \label{z2}
\|Z - I_{3}\|_{L^{\infty}(0,\infty;C^{2}(\Omega))} \leqslant \|Z\|_{L^{\infty}(0,\infty;C^{2}(\Omega))}  \|\nabla X - I_{3}\|_{L^{\infty}(0,\infty;C^{2}(\Omega))}  \leqslant C\gamma.
\end{align}
In a similar manner we can obtain the following estimates
\begin{align} \label{z3}
&\|\partial_{t} X\|_{L^{\infty}((0,\infty)\times \ofo)}\leqslant C\gamma, \quad  \|\partial_{t} Z\|_{L^{\infty}((0,\infty)\times \ofo)}\leqslant C \gamma, \notag \\
&  \|ZQ - I_{3}\|_{L^{\infty}(0,\infty;C^{2}(\Omega))} \leqslant C\gamma.
\end{align}

We are now in a position to estimate the nonlinear terms.

\underline{Estimate of $\mathcal{F}.$}
\begin{align} \label{f1-e}
\|e^{\eta (\cdot)}\mathcal{F}\|_{L^{p}(0,\infty;L^{q}(\ofo))} \leqslant C \gamma^{2}.
\end{align}

 \noindent \textbullet Estimate of first, second and third term of $\mathcal{F}$:  Using  \eqref{q1}, \eqref{z12} and \eqref{z3} we have
 \begin{align*}
 &\left\| e^{\eta(\cdot)}\Big(- [(Q-I_{3}) \partial_{t} \widetilde u]_{i} -(\omega \times Q\widetilde u)_{i} + \partial_{t} X  \cdot Z^{T} \nabla (Q\widetilde u)_{i}\Big)\right\|_{L^{p}(0,\infty;L^{q}(\ofo))} \\
 &\leqslant C \left( \|Q - I_{3}\|_{L^{\infty}(0,\infty;\mathbb{R}^{3\times 3})} + \|\widetilde \omega\|_{L^{\infty}(0,\infty;\rt)} + \|\partial_{t} X\|_{L^{\infty}((0,\infty)\times \ofo)}\right) \|e^{\eta(\cdot)} \widetilde u\|_{W^{2,1}_{q,p}(Q^{\infty}_{F})} \\
 & \leqslant C\gamma^{2}.
 \end{align*}

 \noindent  \textbullet Estimate of fourth term of $\mathcal{F}$: By H\"older's inequality and using \eqref{z12},  we obtain
 \begin{align*}
 &\|e^{\eta(\cdot)}\widetilde u  \cdot Z^{T} \nabla (Q\widetilde u)_{i}\|_{L^{p}(0,\infty;L^{q}(\ofo))} \notag \\
 & \leqslant  C \|e^{\eta(\cdot)}\widetilde u  \cdot  \nabla \widetilde u_{i}\|_{L^{p}(0,\infty;L^{q}(\ofo))} \notag \\
 & \leqslant C \|e^{\eta(\cdot)}\widetilde u\|_{L^{3p}(0,\infty;L^{3q}(\ofo))} \|\nabla \widetilde u_{i}\|_{L^{3p/2}(0,\infty;L^{3q/2}(\ofo))}.
 \end{align*}
Since $\ds \frac{1}{p} + \frac{3}{2q} \leqslant \frac{3}{2},$ one has the following  embeddings (see for example \cite[Proposition 4.3]{Gei13} )
$$W^{2,1}_{q,p}(Q^{\infty}_{F}) \hookrightarrow L^{3p}(0,\infty;L^{3q}(\ofo)) \mbox{ and } W^{2,1}_{q,p}(Q^{\infty}_{F}) \hookrightarrow L^{3p/2}(0,\infty;W^{1+3q/2}(\ofo)).$$
Therefore, using the above embeddings we obtain
\begin{align*}
\|e^{\eta(\cdot)}\widetilde u  \cdot Z^{T} \nabla (Q\widetilde u)_{i}\|_{L^{p}(0,\infty;L^{q}(\ofo))} \leqslant C\gamma^{2}
\end{align*}
\noindent  \textbullet Estimate of fifth term of $\mathcal{F}$(estimates of remaining terms of  $\mathcal{F}$ are similar) : Using \eqref{z12} and \eqref{z2} we have
\begin{align*}
&\left\| \nu e^{\eta(\cdot)}\sum_{l,j,k} \frac{\partial^2 (Q\tilde u)_{i} }{\partial y_l\partial y_k}  \left(Z_{k,j}  - \delta_{k,j}\right) Z_{l,j} \right\|_{L^{p}(0,\infty;L^{q}(\ofo))} \\
& \leqslant C  \|Z - I_{3}\|_{L^{\infty}(0,\infty;C^{2}(\Omega))} \|e^{\eta(\cdot)} \widetilde u\|_{W^{2,1}_{q,p}(Q^{\infty}_{F})} \leqslant C \gamma^{2}.
\end{align*}

\underline{Estimate of $\mathcal{H}.$}
\begin{align} \label{h-e}
\|e^{\eta (\cdot)}\mathcal{H}\|_{W^{2,1}_{q,p}(Q_{F}^{\infty})} \leqslant C\gamma^{2}.
\end{align}
Using \eqref{z2} and \eqref{z3}, we obtain
\begin{align*}
&\|e^{\eta (\cdot)}(I_{3} - [ZQ]^{T}) \widetilde u\|_{W^{2,1}_{q,p}(Q_{F}^{\infty})} \\
& \leqslant C \Big( \|ZQ - I_{3}\|_{L^{\infty}(0,\infty;C^{2}(\Omega))} + \|\partial_{t} Z\|_{L^{\infty}((0,\infty)\times \ofo)} \\
& \qquad \qquad \qquad + \|\partial_{t} Q\|_{L^{\infty}(0,\infty;\rt \times \rt)} \Big) \|e^{\eta(\cdot)} \widetilde u\|_{W^{2,1}_{q,p}(Q^{\infty}_{F})} \\
& \leqslant C \gamma^{2}.
\end{align*}
\underline{Estimate of $\mathcal{G}_{1}$ and $\mathcal{G}_{2}:$} From the expressions of $\mathcal{G}_{1}$ and $\mathcal{G}_{2}$ it is easy to see that
\begin{align} \label{g-e}
\|e^{\eta (\cdot)}\mathcal {G}_{1}\|_{L^{p}(0,\infty;\rt)} + \|e^{\eta (\cdot)}\mathcal {G}_{2}\|_{L^{p}(0,\infty;\rt)} \leqslant  C \gamma^{2}.
\end{align}
Combining \eqref{f1-e} - \eqref{g-e}, we obtain \eqref{e-nl}.
\end{proof}

\begin{proposition} \label{prop-nlpe}
Let us assume $1 < p,q < \infty$ satisfying the condition  $\ds \frac{1}{p} + \frac{3}{2q} \leqslant \frac{3}{2}.$ Let $\gamma_{0}$ is defined as in \eqref{gam0}.
There exist constant   $C_{lip} > 0$  depending only on $p,q,\eta$ and $\Omega_{F}(0)$ such that for every $\gamma \in (0, \gamma_{0})$ and for every  $(\widetilde u^{j},\widetilde \pi^{j},\widetilde \ell^{j},\widetilde \omega^{j})  \in {\mathcal S}_{\gamma}$,  $j=1,2$ we have
\begin{multline} \label{e-nlp}
\|e^{\eta (\cdot)}\mathcal{F}(\widetilde u^{1},\widetilde \pi^{1},\widetilde \ell^{1},\widetilde \omega^{1}) - e^{\eta (\cdot)}\mathcal{F}(\widetilde u^{2},\widetilde \pi^{2},\widetilde \ell^{2},\widetilde \omega^{2})\|_{L^{p}(0,\infty;L^{q}(\ofo))} \\
+ \|e^{\eta (\cdot)}\mathcal{H}(\widetilde u^{1},\widetilde \pi^{1},\widetilde \ell^{1},\widetilde \omega^{1}) - e^{\eta (\cdot)}\mathcal{H}(\widetilde u^{2},\widetilde \pi^{2},\widetilde \ell^{2},\widetilde \omega^{2})\|_{W^{2,1}_{q,p}(Q_{F}^{\infty})} \\
 + \|e^{\eta (\cdot)}\mathcal {G}_{1}(\widetilde \ell^{1},\widetilde \omega^{1}) - e^{\eta (\cdot)}\mathcal {G}_{1}(\widetilde \ell^{2},\widetilde \omega^{2})\|_{L^{p}(0,\infty;\rt)} + \|e^{\eta (\cdot)}\mathcal {G}_{2}(\widetilde \ell^{1},\widetilde \omega^{1}) - e^{\eta (\cdot)}\mathcal {G}_{2}(\widetilde \ell^{2},\widetilde \omega^{2})\|_{L^{p}(0,\infty;\rt)} \\
 \leqslant  C_{lip} \gamma  \left\|(\widetilde u^{1},\widetilde \pi^{1},\widetilde \ell^{1},\widetilde \omega^{1}) -  (\widetilde u^{2},\widetilde \pi^{2},\widetilde \ell^{2},\widetilde \omega^{2}) \right\|_{\mathcal{S}}
\end{multline}
\end{proposition}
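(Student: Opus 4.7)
The plan is to mirror the proof of \cref{prop-nle}, replacing each quadratic estimate by a bilinear one. For $j=1,2$, let $Q^{j}, X^{j}, Z^{j}, a^{j}, \Lambda^{j}$ denote the auxiliary quantities built from $(\widetilde\ell^{j}, \widetilde\omega^{j})$ via \eqref{def-Q-g}--\eqref{Z-g}. The core of the proof reduces to two ingredients: (i) \emph{Lipschitz estimates} for these auxiliary quantities with respect to $(\widetilde\ell, \widetilde\omega)$, and (ii) algebraic expansion of each difference $\mathcal{F}(\cdot^{1})-\mathcal{F}(\cdot^{2})$ (and similarly for $\mathcal{H}, \mathcal{G}_{1}, \mathcal{G}_{2}$) into a sum of products, each containing exactly one ``small factor'' of size $O(\gamma)$ (controlled by the $\mathcal{S}_{\gamma}$ bound) and one ``difference factor'' controlled by $\|(\cdot^{1})-(\cdot^{2})\|_{\mathcal{S}}$.

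For step (i), I would first subtract the defining ODE $\dot Q^{j} = Q^{j}A(\widetilde\omega^{j})$ to get
\[
\frac{d}{dt}(Q^{1}-Q^{2}) = (Q^{1}-Q^{2})A(\widetilde\omega^{1}) + Q^{2}A(\widetilde\omega^{1}-\widetilde\omega^{2}), \qquad (Q^{1}-Q^{2})(0)=0,
\]
and apply Gronwall exactly as in \eqref{q1} to obtain $\|Q^{1}-Q^{2}\|_{L^{\infty}(0,\infty;\mathbb{R}^{3\times 3})} \leqslant C_{p,\eta}\|e^{\eta(\cdot)}(\widetilde\omega^{1}-\widetilde\omega^{2})\|_{L^{p}(0,\infty;\rt)}$. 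The same mechanism gives $\|a^{1}-a^{2}\|_{L^{\infty}}$, $\|\Lambda^{1}-\Lambda^{2}\|_{L^{\infty}(0,\infty;C^{2}(\Omega))}$ controlled by the $\mathcal{S}$-norm of $(\widetilde\ell^{1}-\widetilde\ell^{2},\widetilde\omega^{1}-\widetilde\omega^{2})$. Differentiating \eqref{Jacobi-g} in $y$, subtracting, and using Gronwall with the uniform bound \eqref{j3} then yields
\[
\|\nabla X^{1}-\nabla X^{2}\|_{L^{\infty}(0,\infty;C^{2}(\Omega))} + \|\partial_{t}X^{1}-\partial_{t}X^{2}\|_{L^{\infty}((0,\infty)\times\ofo)} \leqslant C\|(\widetilde\ell^{1}-\widetilde\ell^{2},\widetilde\omega^{1}-\widetilde\omega^{2})\|_{\mathcal{S}},
\]
and hence, via $Z=(\nabla X)^{-1}=\mathrm{Cof}(\nabla X)$ and the identity $Z^{1}-Z^{2}=Z^{1}(\nabla X^{2}-\nabla X^{1})Z^{2}$, an analogous bound for $Z^{1}-Z^{2}$ and $\partial_{t}(Z^{1}-Z^{2})$.

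For step (ii), I would treat each summand of $\mathcal{F}$ in \eqref{eq:F} separately. Linear-in-$\widetilde u$ terms such as $(Q^{j}-I_{3})\partial_{t}\widetilde u^{j}$ are split as
\[
(Q^{1}-I_{3})\partial_{t}\widetilde u^{1} - (Q^{2}-I_{3})\partial_{t}\widetilde u^{2} = (Q^{1}-Q^{2})\partial_{t}\widetilde u^{1} + (Q^{2}-I_{3})\partial_{t}(\widetilde u^{1}-\widetilde u^{2}),
\]
where the first summand uses the Lipschitz estimate from step (i) against $\|\partial_{t}\widetilde u^{1}\|_{L^{p}(L^{q})}\leqslant \gamma$, and the second uses \eqref{q1} for $Q^{2}-I_{3}=O(\gamma)$ against $\|(\widetilde u^{1}-\widetilde u^{2})\|_{\mathcal{S}}$. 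The truly quadratic convective term $\widetilde u^{j}\cdot Z^{(j)\top}\nabla(Q^{j}\widetilde u^{j})$ is decomposed into a four-term telescoping sum (one factor varies per summand), and each piece is then estimated using the two Sobolev embeddings $W^{2,1}_{q,p}(Q^{\infty}_{F})\hookrightarrow L^{3p}(0,\infty;L^{3q}(\ofo))$ and $W^{2,1}_{q,p}(Q^{\infty}_{F})\hookrightarrow L^{3p/2}(0,\infty;W^{1,3q/2}(\ofo))$, exactly as in \cref{prop-nle}. The remaining $Z$-dependent terms of $\mathcal{F}$ are treated identically by splitting each difference into a sum where either the $Z$ (or $Q$) factor gives the difference and $\widetilde u^{j}$ is bounded in $W^{2,1}_{q,p}$ by $\gamma$, or vice versa. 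The term $\mathcal{H}=(I_{3}-(ZQ)^{\top})\widetilde u$ is handled in the same manner in the $W^{2,1}_{q,p}$ norm, using the Lipschitz estimates on $\partial_{t}Z$ and $\partial_{t}Q$ obtained in step (i). Finally $\mathcal{G}_{1},\mathcal{G}_{2}$ are purely bilinear in $(\widetilde\ell,\widetilde\omega)$, so their Lipschitz estimates follow from the elementary identity $\widetilde\omega^{1}\times\widetilde\ell^{1}-\widetilde\omega^{2}\times\widetilde\ell^{2}=(\widetilde\omega^{1}-\widetilde\omega^{2})\times\widetilde\ell^{1}+\widetilde\omega^{2}\times(\widetilde\ell^{1}-\widetilde\ell^{2})$ combined with $\|\widetilde\omega^{j}\|_{L^{\infty}(e^{\eta t})},\|\widetilde\ell^{j}\|_{L^{\infty}(e^{\eta t})}\leqslant C\gamma$.

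The principal technical obstacle will be ensuring that the weight $e^{\eta t}$ is preserved throughout the Gronwall arguments of step (i): one must carefully bound integrals of the form $\int_{0}^{t}e^{-\eta s}\,\|\text{difference}(s)\|_{C^{2}}\,ds$ uniformly in $t$, which is exactly what the exponential decay provided by $\mathcal{S}_{\gamma}$ allows. Once these Lipschitz bounds for the auxiliary quantities are in place, the remaining combinatorial expansion of the nonlinear terms is routine and produces the desired constant $C_{\mathrm{lip}}$ depending only on $p,q,\eta$ and $\ofo$.
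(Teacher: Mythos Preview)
Your proposal is correct and is precisely the standard fleshing-out of what the paper means by ``similar to the proof of \cref{prop-nle}'': the paper gives no further details, and the two-step program you describe (Gronwall-based Lipschitz bounds on $Q,X,Z$ followed by telescoping each nonlinear term into sums of ``small factor $\times$ difference factor'') is exactly the intended argument. There is nothing to add.
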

\begin{proof}
The proof is similar to the proof of \cref{prop-nle}.
\end{proof}

\subsection{Proof of  \cref{th-g}}
At first we prove global existence and uniqueness theorem for the transformed system \eqref{eq:NL-1} -\eqref{eq:G} under the smallness assumption on the initial data.  More precisely we prove the following theorem
\begin{theorem} \label{th-g-t}
Let $1 < p,q < \infty$ satisfying the conditions $\ds \frac{1}{p} + \frac{1}{2q} \neq 1$ and $\ds \frac{1}{p} + \frac{3}{2q} \leqslant \frac{3}{2}.$ Let $\eta \in (0,\eta_{0}),$ where $\eta_{0}$ is the constant introduced in \cref{th:exp-st}.
There exist a constant $\widetilde \gamma > 0$ depending only on $p,q, \eta$ and $\ofo$ such that,   for all  $\gamma \in (0,\widetilde \gamma)$ and for all   $(u_{0},\ell_{0},\omega_{0}) \in  B^{2(1-1/p)}_{q,p}(\ofo) \times \rt \times \rt$ satisfying the compatibility conditions
\begin{align*}
&\mathrm{div} \  u_{0} = 0 \mbox{ in } \ofo,  \\
& u_{0} = \ell_{0} + \omega_{0} \times y \mbox{ on } \poso, \quad u_{0} = 0 \mbox{ on } \partial\Omega \mbox{ if } \frac{1}{p} + \frac{1}{2q} < 1  \\
\mbox{ and } & u_{0} \cdot n = (\ell_{0} + \omega_{0} \times y) \cdot n \mbox{ on } \poso, \quad \quad u_{0} \cdot n = 0 \mbox{ on } \partial \Omega \mbox{ if } \frac{1}{p} + \frac{1}{2q} > 1,
\end{align*}
and
\begin{align} \label{eq:ini-ball}
\|u_{0}\|_{B^{2(1-1/p)}_{q,p}(\ofo)} + \|\ell_{0}\|_{\rt} + \|\omega_{0}\|_{\rt} \leqslant \frac{\gamma}{2C_{L}},
\end{align}
where $C_{L}$ is the continuity constant appear in \eqref{est:L},
the system \eqref{eq:NL-1} -\eqref{eq:G} admits a unique strong solution $(\widetilde u, \widetilde \pi,\widetilde\ell, \widetilde\omega)$ such that
\begin{align}
\left\|(\widetilde u,\widetilde \pi,\widetilde \ell,\widetilde \omega)  \right\|_{\mathcal{S}} \leqslant \gamma.
\end{align}
\end{theorem}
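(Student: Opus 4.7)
The plan is to prove this theorem by a Banach fixed point argument in the ball $\mathcal{S}_\gamma$ defined in \eqref{ball-g}. Specifically, for any fixed $(\widehat u, \widehat \pi, \widehat \ell, \widehat \omega) \in \mathcal{S}_\gamma$, I would consider the linear system \eqref{fsi-l-d} with source terms
\begin{equation*}
f = \mathcal{F}(\widehat u, \widehat \pi, \widehat \ell, \widehat \omega), \quad h = \mathcal{H}(\widehat u, \widehat \pi, \widehat \ell, \widehat \omega), \quad g_j = \mathcal{G}_j(\widehat \ell, \widehat \omega), \ j=1,2,
\end{equation*}
and the prescribed initial data $(u_0, \ell_0, \omega_0)$, and define $\Phi(\widehat u, \widehat \pi, \widehat \ell, \widehat \omega) := (\widetilde u, \widetilde \pi, \widetilde \ell, \widetilde \omega)$ to be its unique strong solution. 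The existence of $\Phi$ and the bound $\|\Phi(\widehat u, \widehat \pi, \widehat \ell, \widehat \omega)\|_{\mathcal{S}} \leqslant C_L(\text{data size})$ come directly from the maximal $L^p$--$L^q$ regularity result \cref{lplq-L} (here one uses the compatibility conditions on $(u_0, \ell_0, \omega_0)$, the vanishing of $\mathcal{H}$ at $t=0$ and on $\partial \ofo$, which follow from $Q(0)=I_3$, $X(0,\cdot)=\mathrm{Id}$ and the construction).

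Next I would verify that $\Phi$ maps $\mathcal{S}_\gamma$ into itself for $\gamma$ small enough. Combining \cref{lplq-L} with the nonlinear estimate \eqref{e-nl} from \cref{prop-nle}, one obtains
\begin{equation*}
\|\Phi(\widehat u, \widehat \pi, \widehat \ell, \widehat \omega)\|_{\mathcal{S}} \leqslant C_L \Bigl( \|u_0\|_{B^{2(1-1/p)}_{q,p}(\ofo)} + \|\ell_0\|_{\rt} + \|\omega_0\|_{\rt}\Bigr) + C_L C_N \gamma^{2} \leqslant \frac{\gamma}{2} + C_L C_N \gamma^{2},
\end{equation*}
where I used \eqref{eq:ini-ball}. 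Hence $\Phi$ stabilizes $\mathcal{S}_\gamma$ as soon as $C_L C_N \gamma \leqslant 1/2$.

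For the contraction property, given two inputs $(\widehat u^j, \widehat \pi^j, \widehat \ell^j, \widehat \omega^j) \in \mathcal{S}_\gamma$, $j=1,2$, their images under $\Phi$ satisfy the linear system with \emph{zero} initial data and source terms given by the differences of the nonlinearities. Applying \cref{lplq-L} again (with vanishing initial data) and the Lipschitz estimate \eqref{e-nlp} from \cref{prop-nlpe} yields
\begin{equation*}
\bigl\|\Phi(\widehat u^1,\widehat \pi^1,\widehat \ell^1,\widehat \omega^1) - \Phi(\widehat u^2,\widehat \pi^2,\widehat \ell^2,\widehat \omega^2)\bigr\|_{\mathcal{S}} \leqslant C_L C_{lip} \gamma \bigl\|(\widehat u^1 - \widehat u^2, \widehat \pi^1 - \widehat \pi^2, \widehat \ell^1 - \widehat \ell^2, \widehat \omega^1 - \widehat \omega^2)\bigr\|_{\mathcal{S}}.
\end{equation*}
Choosing
\begin{equation*}
\widetilde \gamma := \min\Bigl\{\gamma_0,\ \frac{1}{2 C_L C_N},\ \frac{1}{2 C_L C_{lip}}\Bigr\},
\end{equation*}
with $\gamma_0$ as in \eqref{gam0}, the map $\Phi$ is then a contraction on $\mathcal{S}_\gamma$ for every $\gamma \in (0,\widetilde\gamma)$, so Banach's fixed point theorem produces a unique fixed point, which is the desired strong solution.

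I do not expect any single step to be a serious obstacle, since the hard work has already been done: the bound $\widetilde\gamma \leqslant \gamma_0$ ensures that the change of variables of the previous subsection is well defined and that $\mathrm{dist}(\ost, \partial\Omega) \geqslant \alpha/2$; \cref{prop-nle} and \cref{prop-nlpe} package all the nonlinear estimates; and \cref{lplq-L} supplies the global-in-time $L^p$--$L^q$ maximal regularity with exponential weight $e^{\eta t}$. The one point requiring mild care is checking that the compatibility and vanishing-at-$t=0$ conditions required by \cref{lplq-L} are indeed satisfied by $(\mathcal{F}, \mathcal{H}, \mathcal{G}_1, \mathcal{G}_2)$ evaluated on elements of $\mathcal{S}_\gamma$, which amounts to verifying $\mathcal{H}|_{t=0}=0$ and $\mathcal{H}|_{\partial \ofo}=0$ from $Q(0)=I_3$, $Z(0)=I_3$ and the trace conditions already satisfied by $\widehat u$. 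Once the fixed point is obtained, \cref{th-g} follows by reversing the change of variables \eqref{lpq00}--\eqref{lpq03} and reading off the estimate \eqref{es:main}.
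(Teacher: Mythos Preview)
Your proposal is correct and follows exactly the same strategy as the paper: a Banach fixed point argument on $\mathcal{S}_\gamma$, using \cref{lplq-L} for the linear solvability and estimate, \cref{prop-nle} for the self-mapping, and \cref{prop-nlpe} for the contraction, with the identical choice $\widetilde\gamma = \min\{\gamma_0, 1/(2C_L C_N), 1/(2C_L C_{lip})\}$. Your additional remark about verifying $\mathcal{H}|_{t=0}=0$ and $\mathcal{H}|_{\partial\ofo}=0$ is a point the paper glosses over but which indeed follows from $Q(0)=Z(0)=I_3$ and the boundary conditions on $\widehat u$.
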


\begin{proof}
Let us set
\begin{align}
\widetilde \gamma = \min \left\{ \gamma_{0}, \frac{1}{2C_{L} C_{N}}, \frac{1}{2C_{L} C_{lip}}\right\}
\end{align}
where $\gamma_{0}$ is defined as in \eqref{gam0} and $C_{L},$ $C_{N}$ and $C_{lip}$ are the constants  appearing \cref{lplq-L}, \cref{prop-nle} and \cref{prop-nlpe} respectively. Let us choose $\gamma \in (0,\widetilde \gamma)$ and  $(v, \varphi, \kappa, \tau) \in \mathcal{S}_{\gamma}.$ We consider the following problem
\begin{alignat}{2} \label{eq:NL-2}
&\partial_{t} \widetilde u - \nu \Delta \widetilde u + \nabla \widetilde \pi  = \mathcal{F}(v, \varphi, \kappa, \tau) , \quad \mathrm{div} \ u = \mathrm{div} \ \mathcal{H}(v, \varphi, \kappa, \tau),  & \quad t\in   (0,\infty), \ y \in\ofo, \notag \\
& \widetilde u = 0,  & t\in   (0,\infty), \ y \in \partial\Omega,\notag  \\
&  \widetilde u  = \widetilde \ell  + \widetilde \omega \times y, & \quad t\in  (0,\infty),  \ y \in \poso, \notag \\
&  m\widetilde \ell' =  -  \int_{\partial \Omega_{S}(0)}  \sigma(\widetilde u,\widetilde \pi) n \ d\gamma  +  \mathcal{G}_{1}(\kappa, \tau), & \quad t \in (0,\infty), \\
&  J(0) \widetilde\omega '=  - \int_{\partial \Omega_{S}(0)} y \times   \sigma(\widetilde u,\widetilde \pi)n \ d\gamma + \mathcal{G}_{2}(\kappa, \tau), &  \quad t \in (0,\infty), \notag \\
&  u (0,y) = u_{0}(y), & y \in  \Omega_{F}(0),  \notag \\
&  \ell(0) = \ell_{0},  \quad  \omega(0) = \omega_{0}.  \notag
\end{alignat}
We are going to show the mapping
\begin{align*}
\mathcal{N}: (v, \varphi, \kappa, \tau)  \mapsto (\widetilde u,\widetilde \pi,\widetilde \ell,\widetilde \omega)
\end{align*}
where $(\widetilde u,\widetilde \pi,\widetilde \ell,\widetilde \omega) $ is the solution to the system \eqref{eq:NL-2}, is a contraction in $\mathcal{S}_{\gamma}.$  As $(v, \varphi, \kappa, \tau) \in \mathcal{S}_{\gamma},$ we can apply \cref{lplq-L} and \cref{prop-nle} to the system \eqref{eq:NL-2} and using \eqref{eq:ini-ball}  and definition of $\widetilde \gamma$ we obtain
\begin{align*}
&\left\|\mathcal{N}(v, \varphi, \kappa, \tau)  \right\|_{\mathcal{S}}\\
& \leqslant C_{L} \Big(\|u_{0}\|_{B^{2(1-1/p)}_{q,p}(\ofo)} + \|\ell_{0}\|_{\rt} + \|\omega_{0}\|_{\rt} \Big) + C_{L} C_{N} \gamma^{2} \\
& \leqslant  \gamma.
\end{align*}
Thus $\mathcal{N}$ is a mapping from $\mathcal{S}_{\gamma}$ to itself for all $\gamma \in (0,\widetilde \gamma).$ Next, using \cref{lplq-L} and \cref{prop-nlpe}, we obtain
\begin{align*}
&\left\|\mathcal{N}(v^{1}, \varphi^{1}, \kappa^{1}, \tau^{1})  -  \mathcal{N}(v^{2}, \varphi^{2}, \kappa^{2}, \tau^{2}) \right\|_{\mathcal{S}} \\
& \leqslant C_{L} C_{N} \gamma \left\|(v^{1}, \varphi^{1}, \kappa^{1}, \tau^{1})  -  (v^{2}, \varphi^{2}, \kappa^{2}, \tau^{2}) \right\|_{\mathcal{S}},
\end{align*}
for all $(v^{j}, \varphi^{j}, \kappa^{j}, \tau^{j}),$ $j=1,2.$  Again using the definition of $\widetilde \gamma$ one can easily verify that $\mathcal{N}$ is a strict contraction of $\mathcal{S}_{\gamma}$ for any $\gamma \in (0, \widetilde \gamma),$ which implies our existence and uniqueness result.
\end{proof}

\textit{Proof of \cref{th-g}:}
Let $(\widetilde u, \widetilde \pi,\widetilde\ell, \widetilde\omega)$ be the solution of the system \eqref{eq:NL-1} -\eqref{eq:G}, constructed in \cref{th-g-t}. Since $\gamma < \widetilde \gamma,$ \eqref{eq:c1} is verified and $X(t,\cdot)$ is a well defined mapping and it is a $C^{1}$-diffeomorphism from $\Omega_{F}(0)$ into $\oft.$ Therefore, there is a unique $Y(t,\cdot)$ from $\Omega_{F}(t)$  into $\ofo$ such that $Y(t,\cdot)= X(t,\cdot)^{-1}.$ We set, for all $t \geqslant 0$ and $x \in \oft$
\begin{gather*}
u(t,x) = \widetilde u (t, Y(t,x)), \quad \pi(t,x) = \widetilde \pi(t, Y(t,x)), \\
 a'(t) = Q(t) \widetilde \ell(t) \mbox{ and } \omega(t)  = Q(t) \widetilde \omega(t).
\end{gather*}
We can easily check that $(u, \pi, a, \omega)$ satisfies the original system \eqref{eq:mainsys} satisfying \eqref{es:main}.

\bibliographystyle{siam}
%%\bibliography{fish}
\bibliography{INS_FSI-Lp-Lq_MT}

\begin{thebibliography}{10}

\bibitem{Amann}
{\sc H.~Amann}, {\em Linear and quasilinear parabolic problems. {V}ol. {I}},
  vol.~89 of Monographs in Mathematics, Birkh{\"a}user Boston, Inc., Boston,
  MA, 1995.
\newblock Abstract linear theory.

\bibitem{Amann00}
\leavevmode\vrule height 2pt depth -1.6pt width 23pt, {\em On the strong
  solvability of the {N}avier-{S}tokes equations}, J. Math. Fluid Mech., 2
  (2000), pp.~16--98.

\bibitem{BDDM}
{\sc A.~Bensoussan, G.~Da~Prato, M.~C. Delfour, and S.~K. Mitter}, {\em
  Representation and control of infinite dimensional systems}, Systems \&
  Control: Foundations \& Applications, Birkh\"auser Boston, Inc., Boston, MA,
  second~ed., 2007.

\bibitem{Cum08}
{\sc P.~Cumsille and T.~Takahashi}, {\em Wellposedness for the system modelling
  the motion of a rigid body of arbitrary form in an incompressible viscous
  fluid}, Czechoslovak Math. J., 58(133) (2008), pp.~961--992.

\bibitem{DenkHieberPruss}
{\sc R.~Denk, M.~Hieber, and J.~Pr{{\"u}}ss}, {\em {$\mathscr R$}-boundedness,
  {F}ourier multipliers and problems of elliptic and parabolic type}, Mem.
  Amer. Math. Soc., 166 (2003), pp.~viii+114.

\bibitem{Dor1993}
{\sc G.~Dore}, {\em {$L^p$} regularity for abstract differential equations}, in
  Functional analysis and related topics, 1991 ({K}yoto), vol.~1540 of Lecture
  Notes in Math., Springer, Berlin, 1993, pp.~25--38.

\bibitem{Eng-Nag}
{\sc K.-J. Engel and R.~Nagel}, {\em One-parameter semigroups for linear
  evolution equations}, vol.~194 of Graduate Texts in Mathematics,
  Springer-Verlag, New York, 2000.
\newblock With contributions by S. Brendle, M. Campiti, T. Hahn, G. Metafune,
  G. Nickel, D. Pallara, C. Perazzoli, A. Rhandi, S. Romanelli and R.
  Schnaubelt.

\bibitem{Fab98}
{\sc E.~Fabes, O.~Mendez, and M.~Mitrea}, {\em Boundary layers on
  {S}obolev-{B}esov spaces and {P}oisson's equation for the {L}aplacian in
  {L}ipschitz domains}, J. Funct. Anal., 159 (1998), pp.~323--368.

\bibitem{Fujiwara}
{\sc D.~Fujiwara and H.~Morimoto}, {\em An {$L_{r}$}-theorem of the {H}elmholtz
  decomposition of vector fields}, J. Fac. Sci. Univ. Tokyo Sect. IA Math., 24
  (1977), pp.~685--700.

\bibitem{Gal02}
{\sc G.~P. Galdi}, {\em On the motion of a rigid body in a viscous liquid: a
  mathematical analysis with applications}, in Handbook of mathematical fluid
  dynamics, {V}ol. {I}, North-Holland, Amsterdam, 2002, pp.~653--791.

\bibitem{Gei13}
{\sc M.~Geissert, K.~G\"otze, and M.~Hieber}, {\em {$L^p$}-theory for strong
  solutions to fluid-rigid body interaction in {N}ewtonian and generalized
  {N}ewtonian fluids}, Trans. Amer. Math. Soc., 365 (2013), pp.~1393--1439.

\bibitem{Gei10}
{\sc M.~Geissert, M.~Hess, M.~Hieber, C.~Schwarz, and K.~Stavrakidis}, {\em
  Maximal {$L^p$}-{$L^q$}-estimates for the {S}tokes equation: a short proof of
  {S}olonnikov's theorem}, J. Math. Fluid Mech., 12 (2010), pp.~47--60.

\bibitem{Got12}
{\sc K.~G\"otze}, {\em Maximal {$L^p$}-regularity for a 2{D} fluid-solid
  interaction problem}, in Spectral theory, mathematical system theory,
  evolution equations, differential and difference equations, vol.~221 of Oper.
  Theory Adv. Appl., Birkh\"auser/Springer Basel AG, Basel, 2012, pp.~373--384.

\bibitem{HB65}
{\sc J.~Happel and H.~Brenner}, {\em Low {R}eynolds number hydrodynamics with
  special applications to particulate media}, Prentice-Hall, Inc., Englewood
  Cliffs, N.J., 1965.

\bibitem{WeisKun01}
{\sc P.~C. Kunstmann and L.~Weis}, {\em Perturbation theorems for maximal
  {$L_p$}-regularity}, Ann. Scuola Norm. Sup. Pisa Cl. Sci. (4), 30 (2001),
  pp.~415--435.

\bibitem{KW04}
{\sc P.~C. Kunstmann and L.~Weis}, {\em Maximal {$L_p$}-regularity for
  parabolic equations, {F}ourier multiplier theorems and
  {$H^\infty$}-functional calculus}, in Functional analytic methods for
  evolution equations, vol.~1855 of Lecture Notes in Math., Springer, Berlin,
  2004, pp.~65--311.

\bibitem{MR17}
{\sc D.~Maity and J.-P. Raymond}, {\em Feedback stabilization of the
  incompressible navier--stokes equations coupled with a damped elastic system
  in two dimensions}, Journal of Mathematical Fluid Mechanics, 19 (2017),
  pp.~773--805.

\bibitem{MT17}
{\sc D.~Maity and M.~Tucsnak}, {\em A Maximal Regularity Approach to the
  Analysis of Some Particulate Flows}, Springer International Publishing, Cham,
  2017, pp.~1--75.

\bibitem{Raymond}
{\sc J.-P. Raymond}, {\em Stokes and {N}avier-{S}tokes equations with
  nonhomogeneous boundary conditions}, Ann. Inst. H. Poincar\'e Anal. Non
  Lin\'eaire, 24 (2007), pp.~921--951.

\bibitem{Raymond-fsi}
{\sc J.-P. Raymond}, {\em Feedback stabilization of a fluid-structure model},
  SIAM J. Control Optim., 48 (2010), pp.~5398--5443.

\bibitem{TT2}
{\sc T.~Takahashi}, {\em Analysis of strong solutions for the equations
  modeling the motion of a rigid-fluid system in a bounded domain}, Adv.
  Differential Equations, 8 (2003), pp.~1499--1532.

\bibitem{TT1}
{\sc T.~Takahashi and M.~Tucsnak}, {\em Global strong solutions for the two
  dimensional motion of an infinite cylinder in a viscous fluid}, J. Math.
  Fluid Mech., 6 (2004), pp.~53--77.

\bibitem{Triebel-2}
{\sc H.~Triebel}, {\em Interpolation theory, function spaces, differential
  operators}, Johann Ambrosius Barth, Heidelberg, second~ed., 1995.

\bibitem{WX11}
{\sc Y.~Wang and Z.~Xin}, {\em Analyticity of the semigroup associated with the
  fluid-rigid body problem and local existence of strong solutions}, J. Funct.
  Anal., 261 (2011), pp.~2587--2616.

\bibitem{Weis01}
{\sc L.~Weis}, {\em Operator-valued {F}ourier multiplier theorems and maximal
  {$L_p$}-regularity}, Math. Ann., 319 (2001), pp.~735--758.

\end{thebibliography}

\end{document}